\def\ps@pprintTitle{%
 \let\@oddhead\@empty
 \let\@evenhead\@empty
 \def\@oddfoot{}%
 \let\@evenfoot\@oddfoot}
\DeclareMathOperator{\Spec}{Spec}
\DeclareMathOperator{\height}{ht}
\DeclareMathOperator{\mlb}{Mlb}
\DeclareMathOperator{\mub}{mub}
\newtheorem{thm}{Theorem}[section]
\newtheorem{defn}[thm]{Definition}
\newtheorem{lem}[thm]{Lemma}
\newtheorem*{thmm*}{Theorem}
\newtheorem{notation}[thm]{Notation}
\DeclareMathOperator{\vp}{\varphi}
\begin{document}

\begin{abstract}
In this article, we develop a technique to ``split" certain types of partially ordered sets into simpler ones and use that technique to give a partial answer to a conjecture by R. Wiegand and S. Wiegand on the structure of semi-local, two-dimensional Noetherian spectra in their exposition on Noetherian prime ideals. Specifically, we show that very many two-dimensional posets with finitely many height two nodes that satisfy the necessary conditions of being the spectrum of a Noetherian ring, along with a very mild cardinality assumption, are in fact the spectrum of a Noetherian ring. 
\end{abstract}
\begin{keyword}
Noetherian spectra
\end{keyword}
\begin{frontmatter}
\author{Cory H. Colbert}
\title{On Two-Dimensional Semi-Local Noetherian Spectra}
\address{Williams College, Williamstown, MA. 01267}
\date{} 
\end{frontmatter}


\section{Introduction}
It is a wide-open question, originally posed by I. Kaplansky, as to which partially ordered sets (posets) arise as the spectrum of a commutative Noetherian ring. There have been many remarkable results and interesting insights in this area, and we refer the reader to a beautiful exposition written by Roger Wiegand and Sylvia Wiegand \cite{RSEXPO}. In 1983, S. Wiegand \cite{SPE} expanded and refined techniques developed by R. Heitmann in \cite{HEI1} and A. Doering and Y. Lequain in \cite{DL} to completely classify all countable, 2-dimensional, semi-local (finitely many maximal prime ideals) Noetherian spectra with a single minimal node. Left open were the cases where the spectra had several minimal prime ideals or were large in cardinality. It is these two issues that are of concern to us herein. 

En route to the discovery of our main theorem, stated below, we needed to develop a convenient notion for those posets $U$ that have a chance at being the spectrum of a semi-local, two-dimensional Noetherian ring. For example, it is trivial that $U$ must satisfy ACC on its nodes since the set of all ideals in a Noetherian ring satisfies ACC. As another example, any candidate poset $U$ must obviously only have finitely many minimal nodes since any commutative Noetherian ring has only finitely many minimal prime ideals. Other conditions exist on $U$ as well, and they are usually derived by invoking the prime avoidance lemma or using the altitude theorem. We discuss those conditions in much greater detail in section \ref{basics}, and we call any such candidate poset whose dimension is at most two a \textit{$K$-poset.} If $M$ is any height two node and $m$ is any height zero node in a $K$-poset $U$ such that $M > m,$ define $[M/m]$ to be all height one nodes $u$ in $U$ such that $u$ is exceeded only by $M$ and exceeds only $m.$ If the cardinality of $[M/m]$ is either zero or agrees with the cardinality of $U$ for all such pairs $M, m,$ then we call $U$ a \textit{proper $K$-poset.} Although this restriction looks a bit strong, it is actually rather mild in the sense that proper $K$-posets show up rather naturally; for example, if $R = k[x_1, \ldots, x_n]$ for $n \ge 2$ and $P, Q$ are two height two prime ideals of $R,$ then $\Spec R_{P \cup Q}$ is a proper $K$-poset. Our main theorem is thus stated:

\begin{thm}\label{MT} Every proper $K$-poset is the spectrum of some commutative Noetherian ring. \end{thm}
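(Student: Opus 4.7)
The plan is to prove the theorem by an inductive splitting argument: decompose a proper $K$-poset $U$ into simpler proper $K$-posets, realize each simpler piece as the spectrum of a Noetherian ring by extending the techniques of S. Wiegand, and then glue the rings together via a fiber-product construction. The proper hypothesis---each nonempty $[M/m]$ has cardinality $|U|$---is what should make both the splitting and the gluing go through cleanly, and the finiteness of the height two nodes keeps the gluing locus controllable.

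For the splitting, I would look at the (finitely many) minimal nodes $m_1, \ldots, m_r$ of $U$ and consider the subposets $U_i = \{u \in U : u \geq m_i\}$. Each $U_i$ should be a proper $K$-poset with a single minimal node, and the cardinality uniformity of the $[M/m]$-classes should be preserved under restriction, since each $U_i$ inherits the cardinality of $U$ whenever it contains any height one nodes. A secondary decomposition along maximal nodes, when needed, should further reduce to base cases whose minimal and maximal structure is tightly controlled.

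For the base case, I would construct, for each single-minimal-node proper $K$-poset $U_i$, a Noetherian ring $R_i$ with $\Spec R_i \cong U_i$. This amounts to extending S. Wiegand's classification in \cite{SPE} from the countable to the arbitrary cardinality regime, made possible by the mild cardinality assumption of the theorem. Concretely, I would start with a suitable localization of a polynomial ring over a field of the appropriate cardinality and then carefully arrange height one primes so that the fibers $[M/m_i]$ are matched exactly, using that each nonempty fiber has cardinality precisely $|U_i|$ to supply enough primes without overshooting.

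For the reassembly, I would form a pullback ring $R$ of the $R_i$ along their common height two quotients, so that $\Spec R$ is the pushout of the $\Spec R_i$ along the shared maximal nodes, recovering $U$. The main obstacle, and the step requiring the most care, is verifying that $R$ is Noetherian and that no spurious prime ideals appear in $\Spec R$; fiber products of Noetherian rings need not be Noetherian, and the conductor ideal together with the gluing locus must be engineered so that the descent of Noetherianity is transparent. I expect the full force of the proper hypothesis---the uniform cardinality of the $[M/m]$ fibers together with the semi-local structure---to be concentrated in this verification.
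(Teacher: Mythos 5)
Your approach is genuinely different from the paper's, and it has a gap at the gluing step. Your decomposition $U_i = \{u \in U : u \geq m_i\}$ along minimal nodes produces subposets that overlap not only at the height two maximal nodes but also at every height one node that dominates two or more minimal nodes (precisely the nodes the paper collects into $\mathcal H_U$). A fiber product of the $R_i$ along their ``common height two quotients'' would identify only the maximal primes, leaving the shared height one primes as distinct copies in the spectrum of the pullback. Concretely: if $p$ is a height one node with $m_1, m_2 < p < M$, then $p$ lives in both $U_1$ and $U_2$, and in $\Spec(R_1 \times_C R_2)$ you would get two distinct primes lying over the two images of $p$, neither of which is the single node $p$ you need. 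So $\Spec R \not\cong U$. Enlarging the gluing locus $C$ to include the shared height one primes as well makes the pullback far harder to analyze; you would no longer be gluing merely along a product of residue fields, and the Noetherianity and prime-structure control you flag as the hard part becomes essentially unmanageable in that generality.

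The paper avoids this by running the construction in the opposite direction. Rather than decomposing $U$ into subposets and pushing the rings out, it \emph{splits} the poset at maximal nodes (Theorem \ref{localsimplify} and Theorem \ref{globalsimplify}), producing a larger ``simple'' $K$-poset in which each height two maximal node sits over a $\beta$-tent; the splitting leaves the $\mathcal H$-structure alone, so no height one nodes ever get duplicated. It realizes that simple poset as $\Spec R$ (Theorems \ref{Embed} and \ref{class3}), and then recovers the original poset by passing to a \emph{subring} $L \subseteq R$ that glues the extra maximal ideals back together via Doering--Lequain's Theorem A (Theorem \ref{iso}). That theorem is precisely engineered to glue maximal ideals while preserving Noetherianity and leaving the rest of the spectrum intact, which is exactly what's needed. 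There is no analogous off-the-shelf construction for the pullback gluing along height one primes that your approach would require, and that missing piece is where the proposal breaks down. A secondary difference: the paper's base cases are not the single-minimal-node posets you target but the classified simple $K$-posets (points, fans, $\beta$-tents, Theorem \ref{classificationofkposets}), which can have several minimal nodes; reducing to the single-minimal-node case is not how the argument is organized, and is not obviously sufficient.
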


This theorem classifies a large collection of semi-local, two-dimensional Noetherian spectra, and it leaves only unusual $K$-posets not satisfying the mild cardinality restriction outside the scope of our approach. In fact, proper $K$-posets are only required to have finitely many height two nodes and we thus obtain spectra with infinitely many height one maximal prime ideals as part of our classification. It also offers a full resolution of the countable case of a conjecture on the structure of all semi-local, two-dimensional Noetherian spectra \cite[Conjecture 1.4]{RSEXPO}. 

Our approach is quite intuitive, and it is largely inspired by the techniques developed in \cite{DL}.  Given a proper $K$-poset $U,$ we seek to ``unravel" it into simpler pieces with simple inclusion relations. Outside of defining what it means to make a poset simpler, the main issue that presents itself here is to define what it means to have easy inclusion relations. In sections \ref{basics} and \ref{kclass}, we develop a notion of splitting a poset at a maximal node (much like splitting a prime ideal via an integral extension) and we describe and classify the proper $K$-posets that are, in a sense, the simplest one can hope to have (see Theorem \ref{classificationofkposets}). We call these atomic posets \textit{simple $K$-posets}, and we show that it is enough to realize those posets as spectra of Noetherian rings in order to prove Theorem \ref{MT}. 

In section \ref{embedding}, we show that if $U$ is a proper, simple $K$-poset, then we can find a Noetherian ring $R_0$ such that $U$ embeds into $\Spec R_0$ in the sense that they are almost identical, save for perhaps different sizes of certain equivalence classes of nodes. For example, perhaps $U$ has only two maximal nodes $m, n$, both height two, with only height one node $u$ dominated by them both, and maybe the corresponding maximal prime ideals $M, N$ in $R_0$ have infinitely many height one prime ideals contained in $M \cap N.$ In a similar fashion to how S. Wiegand proceeded in \cite{SPE}, we show, in section \ref{proof of main result}, that we can nevertheless find a Noetherian ring $S \supseteq R_0$ such that the equivalence classes in $\Spec S$ have the right size and everything else is otherwise preserved; that is, $U \cong \Spec S.$ 

\section{Acknowledgements}
The author would like to thank his doctoral supervisor, Raymond Heitmann, for his gracious support, enlightening advice, and clever insights during the preparation of the mathematical content of this article. The author is also grateful for the support, encouragement, and advice of Susan Loepp during the preparation of this article. Finally, the author would like to thank Williams College for their continued support.

\section{Basic Definitions and Notation}

Throughout this paper, all rings will be commutative with unity. If $f: A \to B$ is a map of rings, and $I$ is an ideal of $A,$ we will write $IB$ for the ideal $I^e,$ and if $J$ is an ideal of $B,$ we will write $J \cap A$ to denote the ideal $J^c.$

Let $U$ be a poset. We define $\dim U$ to be the supremum of the lengths of the chains of nodes in $U.$ If $V$ is another poset, we say $\vp: U \to V$ is a poset map iff $\vp(x) \le \vp(y)$ whenever $x \le y.$ We say $\vp$ is order-reflexive iff $x \le y$ whenever $\vp(x) \le \vp(y).$ Any order-reflexive map from $U$ onto $V$ is called an isomorphism of posets. If $A \subseteq U$ is a nonempty subset of nodes, we define $\mub A = \min \{u \in U: \text{$u \ge a$ for all $a \in A.$}\}.$ Similarly, we define $\mlb A = \max \{u \in U: \text{$u \le a$ for all $a \in A.$}\}.$ 

If $u \in U,$ we define $G(u) = \{v \in U: v \ge u\},$ and we define $L(u) = \{v \in U: v \le u\}.$ We define $G(u)^* = G(u)\setminus \{u\}$ and $L(u)^* = L(u) \setminus \{u\}.$  We define the height of a node $u \in U$ to be $\height u = \dim L(u),$ and we will let $H_i$ be the set of nodes of height equal to $i.$ 

Let $B, C \subseteq U.$ We define $[B/C]$ to be all nodes $u \in U$ such that $G(u)^* = B$ and $L(u)^* = C.$ We usually call this an intersection class of $U,$ and if either $B = \{b\}$ is a singleton (resp. $C = \{c\}$) we will usually write $[b/C]$ (resp. $[B/c]$). 

A path $(\gamma_t)_{t = 1}^n$ in $U$ is a finite sequence of nodes such that for all $1 < i \le n,$ the node $\gamma_i$ comparable to $\gamma_{i-1}.$ $U$ is said to be \textit{connected} iff for any two nodes $u, v \in U,$ there exists a path from $u$ to $v.$ 

In the case of ambiguity arising with any of this notation, we will affix subscripts or superscripts to the notation in appropriate places. For example, we may refer to height two nodes of $U$ as $H_2^U$ instead of just $H_2$ if the need arises.

\section{$K$-posets and Splittings}\label{basics}

\begin{defn} Let $U$ be a poset with $\dim U \le 2.$ We define $U$ to be a $K$-poset iff the following hold:
\begin{enumerate}
\item[1] $\min U, H_2$ are both finite;
\item[2] if $u, v$ are distinct nodes in $\min U,$ then $\mub\{u, v\}$ is finite; and
\item[3] whenever $u > v > w$ for some $v \in U,$ then $[u/w]$ is infinite.
\end{enumerate} 
Additionally, if for any maximal node $u$ and minimal node $w$ we have $|[u/w]| = 0$ or $|[u/w]| = |U|$ then we say $U$ is a \textit{proper} $K$-poset. \end{defn}

Note that we do not place a restriction on the number of height one nodes, and it is quite possible for the set of maximal nodes of $U$ to be infinite. We now provide numerous examples of interesting $K$-posets. 

\begin{thm}\label{SpecsAreKPosets}Let $R$ be a Noetherian ring with $\dim R = 2$ and for which there are only finitely many prime ideals of height two. Then $\Spec R$ is a $K$-poset. \end{thm}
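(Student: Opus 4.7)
The plan is to verify the three defining conditions of a $K$-poset for $U = \Spec R$. Conditions (1) and (2) should be immediate from standard Noetherian finiteness: the set of minimal primes of any Noetherian ring is finite, $H_2$ is finite by hypothesis, and for distinct minimal primes $\mathfrak{r}_1, \mathfrak{r}_2$ of $R$, the set $\mub\{\mathfrak{r}_1,\mathfrak{r}_2\}$ consists of the finitely many primes minimal over $\mathfrak{r}_1+\mathfrak{r}_2$.

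For condition (3), I take a chain $\mathfrak{p} \supsetneq v \supsetneq \mathfrak{r}$ in $\Spec R$ (so $\mathfrak{r}$ is minimal, $\mathfrak{p}$ has height two, $v$ has height one) and aim to produce infinitely many height one primes $\mathfrak{P}$ with $\mathfrak{r} \subsetneq \mathfrak{P} \subsetneq \mathfrak{p}$ that are contained in no other height two prime and contain no other minimal prime. First I pass to $A = R/\mathfrak{r}$, which is a two-dimensional Noetherian domain in which $\bar{\mathfrak{p}}$ has height two. The conditions translate cleanly: height two primes of $R$ not containing $\mathfrak{r}$ can be ignored (they cannot contain any prime that itself contains $\mathfrak{r}$), while the other minimal primes $\mathfrak{r}'_l$ of $R$ contribute finitely many nonzero ideals $I_l = (\mathfrak{r}'_l + \mathfrak{r})/\mathfrak{r}$ of $A$ that the image of $\mathfrak{P}$ must avoid containing.

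The core step will be an iterative prime-avoidance construction in $A$. Having chosen $\bar{\mathfrak{P}}_1, \ldots, \bar{\mathfrak{P}}_{k-1}$, I observe that $\bar{\mathfrak{p}}$ is not contained in any of the finitely many other height two primes $\bar{\mathfrak{p}}_i$ of $A$ (both are maximal and distinct) nor in any height one prime already constructed; prime avoidance then yields a nonzero element $a_k \in \bar{\mathfrak{p}}$ avoiding all of them. Any minimal prime $\bar{\mathfrak{P}}_k$ of $(a_k)$ contained in $\bar{\mathfrak{p}}$ has height one by Krull's theorem, lies in no $\bar{\mathfrak{p}}_i$ (since $a_k \notin \bar{\mathfrak{p}}_i$), and differs from every previously chosen $\bar{\mathfrak{P}}_j$ (since $a_k \notin \bar{\mathfrak{P}}_j$). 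Using the standard fact that in a two-dimensional Noetherian domain the height one primes containing a fixed nonzero ideal must themselves be minimal over that ideal (hence finite in number), I then discard the finitely many $\bar{\mathfrak{P}}_k$ that contain some $I_l$. The remaining infinitely many primes pull back to height one primes of $R$ lying in $[\mathfrak{p}/\mathfrak{r}]$; the pullback is correctly of height one because excluding other minimal primes rules out any length-two chain ending at it from below.

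I expect the main obstacle to be the bookkeeping in condition (3): carefully separating the height two primes of $R$ that contain $\mathfrak{r}$ from those that do not, and verifying that discarding the finitely many "bad" primes that contain some $I_l$ leaves the prime-avoidance construction's infinite output intact. Crucially, the finiteness of $H_2$ is essential here, since without it the set $\bar{\mathfrak{p}} \setminus \bigcup_i \bar{\mathfrak{p}}_i$ could fail to be captured by prime avoidance at all.
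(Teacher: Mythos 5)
Your proof is correct, and it reaches essentially the same two tools as the paper — prime avoidance (where finiteness of $H_2$ enters) together with Krull's principal ideal / altitude theorem — but it routes around two steps the paper takes. First, the paper opens item~3 with a reduction: it localizes $R$ at the union of all height-two primes so that every maximal ideal has height two, and then must check that the class $[P/P'']$ in the original ring is controlled by the corresponding class in the localization. You avoid this reduction entirely by passing directly to the domain $A = R/\mathfrak{r}$; since your prime-avoidance steps only need to dodge the finitely many height-two primes of $A$ plus finitely many height-one primes, the possible presence of infinitely many height-one maximals costs nothing. Second, the paper argues by contradiction: assuming $[Q/Q'']$ is finite, it chooses a single $x \in Q$ avoiding the other maximals, the set $\mathcal H_{\Spec R}$, and $[Q/Q'']$ itself, then uses the altitude theorem in $R/Q''$ to manufacture one new member of $[Q/Q'']$. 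You instead build an explicit infinite sequence of height-one primes $\bar{\mathfrak{P}}_k$ by iterated prime avoidance, each new $a_k$ avoiding the finitely many primes already produced, and afterward discard the finitely many $\bar{\mathfrak{P}}_k$ that contain some $I_l = (\mathfrak{r}'_l + \mathfrak{r})/\mathfrak{r}$; this is the same computation unwound into a direct construction. Both approaches are sound: the paper's is shorter once the semi-local reduction is granted, while yours skips that reduction and its lift-back verification at the cost of a slightly longer inductive loop.
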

\begin{proof} 

Items 1 and 2 are clear. 

Item 3: Assume that we have proven $\Spec R$ is a $K$-poset for all Noetherian rings $R$ for which $M \in \Spec R$ is maximal iff $\height M = 2.$ Let $R'$ be any Noetherian ring satisfying the conditions of the theorem. If $\dim R' \le 1,$ then there is nothing to show. If $\dim R' = 2,$ then localize at the union of all prime ideals of height two to form a new Noetherian ring $R.$ If $P \supsetneq P' \supsetneq P''$ for some primes $P, P', P''$ in $R',$ then $\height P = 2$ and thus survives the localization to $R.$ In particular, $PR \supsetneq P'R \subsetneq P''R$ and thus $[PR/P''R] \subset \Spec R$ is infinite. Each prime $QR \in [PR/P''R]$ is the extension of a prime $Q$ from $R'.$ Since $R'$ is Noetherian, all but finitely many such $Q$ must have $L(Q)^* = P''.$ Obviously a maximal prime ideal $M$ of $R$ has $M\supseteq Q$ iff $M = P.$  In particular, $[P/P''] \subset \Spec R'$ is infinite and $\Spec R'$ is thus seen to be a $K$-poset. We therefore need only prove the result for Noetherian rings $R$ for which every maximal prime ideal of $R$ has height two. 

Assume $R$ is indeed such a ring, and let $\mathcal H_{\Spec R}$ be the set of all minimal prime ideals of $R,$ along with all those height one prime ideals $P$ of $R$ for which $P$ dominates at least two different minimal primes of $R.$ Assume there is a chain $Q \supsetneq Q' \supsetneq Q''$ of prime ideals in $R$ for which $[Q/Q'']$ is finite. Let $M_1:=Q, M_2, \ldots, M_n$ be an enumeration of the maximal prime ideals of $R.$ Choose $x \in Q \setminus \bigcup_{i=2}^n M_i \bigcup_{P \in \mathcal H_{\Spec R} \cup [Q/Q'']} P$ by the prime-avoidance lemma. Define $I = (Q'', x).$ Since $I/Q''$ is principal and $R/Q''$ is Noetherian, the altitude theorem implies that there exists a prime ideal $Q^*/Q'' \supseteq I/Q''$ with $\height Q^*/Q'' = 1.$  If $\height Q^* = 2,$ then $Q^* = Q$ by choice of $x.$ But $Q\supsetneq Q' \supsetneq Q''$ implies that $\height Q/Q'' = 2.$ In particular, $\height Q^* = 1$ and of course $G(Q^*)^* = Q.$ Suppose $Q^* \supset W$ for some minimal prime ideal $W \ne Q''.$ Then $Q^* \supseteq Q'' + W$ and thus $Q \in \mathcal H_{\Spec R},$ conflicting with our choice of $x.$  Thus $L(Q^*)^* = Q''$ as well, and consequently $Q^* \in [Q/Q'']$ which is contrary to our assumptions.  \end{proof}

\begin{defn} Let $V$ be a poset containing a maximal node $m$ of positive height. We say a poset $U$ is a \textit{splitting} of $V$ at $m$ provided there exists a finite nonempty subset $\mathcal M \subseteq \max U$ of nodes of positive height and a surjective poset map $\varphi: U \rightarrow V$ satisfying the following conditions:

\begin{enumerate}
\item[1] $\varphi^{-1}(m) = \mathcal M,$ and $|\varphi^{-1}(v)| = 1$ for all $v \ne m.$ 
\item[2] If $\varphi(x') = x \le y$ for some $x' \in U, x, y \in V,$ then there exists $y' \in U$ for which $x' \le y'$ and $\varphi(y') = y.$ 
\end{enumerate}

If $U$ is a splitting of $V$ at $m$ with map $\varphi,$ then we call $\varphi$ a splitting map from $U$ to $V.$
\end{defn}

We wish to emphasize that we only split nodes $m$ of positive height, and if $U$ is a splitting of $V$ at $m,$ then $\mathcal M$ may not contain any height zero nodes.

\begin{lem}\label{dimension} Let $U$ be a splitting of $V$ at $m$ with splitting map $\varphi.$ Then $\dim U = \dim V$ and $\vp(\min U) = \min V.$ \end{lem}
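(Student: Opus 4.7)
The plan is to deduce both claims by leveraging the two clauses of the splitting definition against one another: clause 1 controls how $\vp$ collapses nodes (only the fiber over $m$ is non-singleton, and it consists of maximal positive-height nodes), while clause 2 lets us lift the relation ``$\le$'' upward. The key preliminary observation I would keep in mind throughout is that $\mathcal M \cap \min U = \emptyset$, because every element of $\mathcal M$ has positive height whereas nodes of $\min U$ have height zero; in particular $m \notin \vp(\min U)$, and $\vp^{-1}(v)$ is a singleton whenever $v \in \min V$ (since $m$ likewise has positive height, so $m \notin \min V$).

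For $\dim U = \dim V$, I would prove the two inequalities separately. For $\dim U \le \dim V$, I push a strict chain $u_0 < \cdots < u_n$ in $U$ forward. The images form a weakly increasing chain, and any equality $\vp(u_i) = \vp(u_{i+1})$ would force both $u_i$ and $u_{i+1}$ into $\mathcal M$; but $\mathcal M$ consists of maximal nodes, contradicting $u_i < u_{i+1}$. For $\dim V \le \dim U$, I start with a strict chain $v_0 < \cdots < v_n$ in $V$, pick any $u_0 \in \vp^{-1}(v_0)$ by surjectivity, and iteratively apply clause 2 to produce $u_{i+1} \ge u_i$ with $\vp(u_{i+1}) = v_{i+1}$; strictness of the $v_i$ forces strictness of the lifted chain.

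For $\vp(\min U) = \min V$, the inclusion ``$\supseteq$'' follows because, given $v \in \min V$, the unique preimage $u$ of $v$ must be minimal: any strict predecessor $u' < u$ would satisfy $\vp(u') \le v$, hence $\vp(u') = v$ by minimality of $v$, hence $u' = u$, a contradiction. The inclusion ``$\subseteq$'' is where clause 2 re-enters: given $u \in \min U$ and a hypothetical $w < \vp(u)$, I would lift $w$ to some $u_w \in \vp^{-1}(w)$ and then apply clause 2 to produce $y' \ge u_w$ with $\vp(y') = \vp(u)$. Since $\vp(u) \ne m$, the fiber is a singleton, so $y' = u$; then $u_w \le u$ together with minimality of $u$ forces $u_w = u$, contradicting $w < \vp(u)$.

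I expect the main obstacle to be purely bookkeeping rather than conceptual: one has to keep careful track of which fibers are singletons and which are not, and resist the temptation to invoke clause 2 without first checking that the lift target lies outside $\{m\}$. Once the observation that minimal-level nodes stay away from $m$ is in place, everything else is formal manipulation of the two clauses.
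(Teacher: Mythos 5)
Your argument is correct, and it is essentially the only reasonable way to flesh out the paper's one-line claim that the lemma "is a straightforward consequence of the definitions." The two key observations you isolate — that the only non-singleton fiber of $\varphi$ lies over $m$ and consists of maximal nodes of positive height (which therefore cannot appear in $\min U$), and that clause 2 allows you to lift chains — are exactly what the paper is implicitly relying on, and your handling of the two inclusions for $\varphi(\min U)=\min V$ and the two inequalities for the dimensions is complete and correct.
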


\begin{proof} This is a straightforward consequence of the definitions. \end{proof}

\begin{notation} If $U$ is any $K$-poset we will define $\mathcal H_U$ to be the set of all minimal nodes of $U,$ along with those height one nodes that dominate at least two minimal nodes of $U.$ If $U$ is a poset with a single maximal node $m$ then we define $\mathcal H_U^* = \mathcal H_U \setminus\{m\}.$ \end{notation}

Unfortunately, splittings of $K$-posets need not be $K$-posets. The next lemma remedies this problem. 

\begin{lem}\label{refine} Let $U$ be a splitting of a proper $K$-poset $V$ at $m$ with splitting map $\varphi.$ Then there exists a proper $K$-poset $U' \supseteq U$ such that $\mathcal H_{U'} = \mathcal H_{U},$ $U'$ is a splitting of $V$ at $m$ with splitting map $\varphi' : U' \rightarrow V$ such that $\varphi' = \varphi$ on $\mathcal H_{U'}.$  \end{lem}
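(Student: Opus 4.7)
The plan is to enlarge $U$ to a proper $K$-poset $U' \supseteq U$ by adjoining new height-one nodes into carefully chosen intersection classes, and to rewire the splitting map $\varphi'$ on everything outside $\mathcal H_{U'}$, which is all the freedom the lemma's conclusion leaves.

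First, locate where $U$ can fail to be a proper $K$-poset. For a height-two node $n' \in U \setminus \mathcal M$, the going-up clause together with the unique-preimage clause $|\varphi^{-1}(v)| = 1$ force $[n'/w']_U$ to be in $\varphi$-bijection with $[n/\varphi(w')]_V$, so this class already has cardinality $0$ or $|V|$ by properness of $V$. The classes that can be deficient are therefore of the form $[\mathcal S/w']_U$ with $\mathcal S \cap \mathcal M \ne \emptyset$: a chain $m_i > y > w'$ triggers axiom 3, but a height-one node of $U$ can sit below several elements of $\mathcal M$ simultaneously, so the singleton class $[\{m_i\}/w']_U$ can still be finite or even empty. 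For each $w' \in \min U$ and each nonempty $\mathcal S \subseteq H_2^U$ that either already contains a node of $U$ or will be required to be nonempty by axiom 3 in $U'$, I would adjoin $|U|$ new height-one nodes $x$ with $G^*_{U'}(x) = \mathcal S$ and $L^*_{U'}(x) = \{w'\}$. Since each such $x$ dominates exactly one minimal and is not itself minimal, $\mathcal H_{U'} = \mathcal H_U$ follows automatically.

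Next, extend $\varphi$ to $\varphi'$: keep $\varphi' = \varphi$ on $\mathcal H_{U'}$, on $\mathcal M$ (still sent to $m$), and on each non-$\mathcal M$ height-two node (whose image is forced by the unique-preimage clause). On the remaining height-one nodes, define $\varphi'$ by a class-by-class bijection: for each minimal $w'$ and each nonempty $\mathcal T \subseteq H_2^V$ with $[\mathcal T/\varphi(w')]_V \ne \emptyset$, fix a bijection between $[\mathcal T/\varphi(w')]_V$ and the disjoint union $\bigsqcup_{\mathcal S}[\mathcal S/w']_{U'}$ taken over populated $\mathcal S \subseteq H_2^{U'}$ with $\varphi'(\mathcal S) = \mathcal T$. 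The finiteness of $\mathcal M$ and the properness of $V$ make both sides of each such bijection have cardinality $|V| = |U'|$.

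The main obstacle is choosing the populations and the bijections so that all four conditions on a splitting map hold at once: surjectivity and the unique-preimage clause fall out of the bijection, but order-preservation of $\varphi'$ together with the going-up clause require that a node in $[\mathcal S/w']_{U'}$ map into the class of $V$ whose upper-cover set is exactly $\varphi'(\mathcal S)$ — any mismatch of upper covers would sever a going-up lift. Once that bookkeeping is set up, axioms 1 ($\min U'$ and $H_2^{U'}$ are unchanged and finite), 2 (the relevant $\mub$-sets do not grow), and 3 (built in by construction), together with properness (populated classes have size $|U'|$), all follow routinely.
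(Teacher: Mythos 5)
Your overall strategy matches the paper's sketch: observe that $\varphi$ restricted to $U\setminus\mathcal M$ is an order isomorphism onto $V\setminus\{m\}$, so the only classes that can be deficient are those $[\mathcal S/w']_U$ with $\mathcal S\cap\mathcal M\ne\emptyset$, and then enlarge those and redistribute the preimage bijection. That identification is correct, and it is exactly the reduction the paper gestures at before declaring the details ``mundane but straightforward.''

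There is, however, a genuine gap in the cardinality bookkeeping. You adjoin $|U|$ new nodes to every nonempty class $[\mathcal S/w']_U$, including classes with $|\mathcal S|\ge 2$, and then assert that ``the finiteness of $\mathcal M$ and the properness of $V$ make both sides of each such bijection have cardinality $|V|=|U'|$.'' That claim is false when $\mathcal T=\varphi'(\mathcal S)$ is not a singleton. Properness of $V$ only controls singleton classes $[u/w]$ for $u$ maximal and $w$ minimal; a class $[\mathcal T/\varphi(w')]_V$ with $|\mathcal T|\ge 2$ is constrained by neither properness nor axiom 3 and can be finite or of any intermediate cardinality (for example, a proper $K$-poset of size $\aleph_1$ with two height-two maxima $m,n$ over one minimal $w$ may have $[m/w]$ and $[n/w]$ of size $\aleph_1$ while $[\{m,n\}/w]$ has size $\aleph_0$). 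Once you have adjoined $|U|$ new nodes to such an $[\mathcal S/w']_{U'}$, the disjoint union $\bigsqcup_{\mathcal S'}[\mathcal S'/w']_{U'}$ over the $\mathcal S'$ with $\varphi'(\mathcal S')=\mathcal T$ has cardinality $|U'|$, which exceeds $|[\mathcal T/\varphi(w')]_V|$, and no injection — let alone the bijection your map $\varphi'$ requires so that $|\varphi'^{-1}(v)|=1$ for $v\ne m$ — can exist.

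The fix is to be stingier: only enlarge the singleton classes $[\{m_i\}/w']_U$ with $m_i\in\mathcal M$, and only those for which there is a chain $m_i>v>w'$ in $U$ (equivalently, those that axiom 3 or properness force to be large). Each such enlarged class feeds into $[m/\varphi(w')]_V$, whose size is $0$ or $|V|$ by properness of $V$, and the fact that the enlargement was triggered by a chain $m_i>v>w'$ guarantees (via axiom 3 for $V$ applied to $m>\varphi(v)>\varphi(w')$) that this target class is nonempty and hence of size $|V|$. The non-singleton classes $[\mathcal S/w']$, being unconstrained, should be left untouched; then the old restriction of $\varphi$ already bijects them with their $V$-images, and the splitting-map axioms go through as you outlined.
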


\begin{proof} 
If $\dim V < 2,$ then so is $\dim U$ by Lemma \ref{dimension}, and therefore $U$ is a proper $K$-poset in that case. Now assume $\dim U = \dim V = 2.$ Observe that the properties of splitting maps force $|U| = |V|,$ $|\min U| = |\min V| < \infty,$ and $|H_2^U| < \infty.$  Moreover, if $u, v \in \min U,$ then $\mub\{u, v\}$ is finite since otherwise $\mub\{\varphi(u), \varphi(v)\} \cap H_1 \subseteq V$ would be infinite which contradicts the assumption that $V$ is a $K$-poset. Properties 1 and 2 of the definition of a proper $K$-poset are thus verified for $U.$ At this stage, one need only enlarge deficient classes of the form $[a/c]$ to turn $U$ into a proper $K$-poset. The details are mundane but straightforward.  \end{proof}

If $V$ is a poset with a single maximal node $m,$ we will define a quantity $d_V(m)$ to measure how complicated $\mathcal H_V$ is. 

\begin{notation} Assume $\dim V \ge 1.$ Define $\Lambda_V \subseteq \mathcal H_V^*$ to be $H_1 \cap \mathcal H_V^*$ along with all those nodes $v \in \mathcal H_V$ for which $G(v) \cap H_1 \cap \mathcal H_V^*$ is empty. Define $d_V(m) = |\Lambda_V|.$ \end{notation} 

Note that if $\dim V > 0$ then $d_V(m) \ge 1$ since if $H_1 \cap \mathcal H_V^*$ is empty then everything in $\min V$ vacuously lives in $\Lambda_V.$ Also, in Lemma \ref{refine} we have $\mathcal H_U = \mathcal H_{U'}$ so that $d_{L_U(m_i)}(m_i) = d_{L_{U'}(m_i)}(m_i)$ for all $m_i \in \mathcal M.$

The next theorem shows that if $V$ is a proper $K$-poset with a single maximal node $m$ such that $d_V(m) > 1,$ then we can find a new proper $K$-poset $U$ that splits $V$ and has much simpler inclusions with respect to $d.$

\begin{thm}\label{localsimplify} Let $V$ be a proper $K$-poset with a single maximal node $m.$ If $d_V(m) > 1$ then there exists a proper $K$-poset $U$ which is a splitting of $V$ at $m$ such that $d_{L_U(m_i)}(m_i) = 1$ for all $m_i \in \mathcal M \subseteq \max U.$ \end{thm}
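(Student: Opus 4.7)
The plan is to construct a splitting $U$ of $V$ at $m$ directly by taking one new maximal node for each element of $\Lambda_V$, and then appeal to Lemma \ref{refine} to upgrade $U$ to a proper $K$-poset splitting without disturbing the relevant invariants. The key observation is that $\Lambda_V$ splits naturally as $B \sqcup I$, where $B = H_1 \cap \mathcal H_V^*$ is the set of ``branching'' height one nodes of $V$ and $I \subseteq \min V$ consists of those minimal nodes lying below no branching height one node. Each $b \in B$ should generate a piece of $U$ collecting $b$ together with the minimals under it, and each $v \in I$ should generate a piece containing only $v$ and the (necessarily non-branching) height one nodes above it.

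Concretely, I would take $U := (V \setminus \{m\}) \sqcup \mathcal M$ with $\mathcal M = \{m_\lambda : \lambda \in \Lambda_V\}$, retain the $V$-order on $V \setminus \{m\}$, declare the $m_\lambda$'s to be maximal and mutually incomparable, and decree $x \le m_\lambda$ for $x \in V \setminus \{m\}$ as follows: for $b \in B$, put everything $\le b$ in $V$ under $m_b$; for $v \in I$, put $v$ and every height one node above $v$ under $m_v$; and for a non-branching height one node $w$ whose unique minimal $v$ lies below $B$, fix once and for all a choice $b(v) \in B$ with $v \le b(v)$ and put $w$ under $m_{b(v)}$ only. A routine downward-closure check confirms that this relation is a partial order, that the map $\varphi : U \to V$ collapsing $\mathcal M$ to $m$ and fixing the rest is a splitting map onto $V$ at $m$, and that $|U| = |V|$.

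Next I would verify $d_{L_U(m_\lambda)}(m_\lambda) = 1$ piece by piece. In $L_U(m_b)$ for $b \in B$, the only branching height one node is $b$ itself (every other height one node in the piece sits above exactly one minimal), and every minimal in the piece lies under $b$; hence $\Lambda_{L_U(m_b)} = \{b\}$. In $L_U(m_v)$ for $v \in I$, no branching height one node is present, so $\Lambda_{L_U(m_v)}$ reduces to the single minimal $v$, which qualifies for the second type of $\Lambda$-element because no height one branching node sits above it in the piece. Applying Lemma \ref{refine} then yields a proper $K$-poset $U' \supseteq U$ that splits $V$ at $m$ with $\mathcal H_{U'} = \mathcal H_U$, and by the remark following the definition of $d$ we obtain $d_{L_{U'}(m_\lambda)}(m_\lambda) = d_{L_U(m_\lambda)}(m_\lambda) = 1$ for every $m_\lambda \in \mathcal M$.

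The main obstacle I expect lies in designing the assignment of non-branching height one nodes so that $U$ can subsequently be refined into a proper $K$-poset. A minimal node $v$ with $v \le b$ for several branching $b \in B$ gets placed simultaneously in each of the corresponding pieces, and the non-branching height one nodes above $v$ must all be pushed into exactly one of those pieces in order for the resulting intersection classes $[m_b/v]$ to have cardinality $0$ or $|U|$. The single-choice function $v \mapsto b(v)$ is precisely engineered for this: for every other branching $b' \ne b(v)$ with $v \le b'$ the class $[m_{b'}/v]$ is empty, while $[m_{b(v)}/v]$ already has cardinality $|V| = |U|$, so Lemma \ref{refine} only needs to enlarge the remaining empty $[a/c]$-classes up to size $|U|$, leaving $\mathcal H$ (and therefore the $d$-values in every sub-poset $L_{U'}(m_\lambda)$) undisturbed.
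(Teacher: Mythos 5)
Your proposal is correct and follows the paper's own route closely: take $\mathcal M$ in bijection with $\Lambda_V$, define the order on $(V\setminus\{m\}) \sqcup \mathcal M$ directly so that each $L_U(m_\lambda)$ has $\Lambda_{L_U(m_\lambda)} = \{\lambda\}$, and close with Lemma \ref{refine} together with the remark that $\mathcal H_U = \mathcal H_{U'}$ forces the local $d$-values to be preserved. Your decomposition $\Lambda_V = B \sqcup I$, with $B = H_1 \cap \mathcal H_V^*$ the branching height-one nodes and $I$ the minimal nodes not under any branching node, is a transparent packaging of the paper's opening observation that distinct $a, b \in \Lambda_V$ satisfy $G_V(a) \cap G_V(b) = \{m\}$; the two facts are equivalent.

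The one small divergence is the routing of a non-branching height-one node $w$ whose unique minimal $v$ sits below several branching nodes: you push $w$ under a single $m_{b(v)}$ for one fixed $b(v) \in B$ with $v \le b(v)$, whereas the paper pushes $w$ under $m_b$ for \emph{every} $b \in G_V(v) \cap H_1 \cap \mathcal H_V^*$. Either choice yields $\Lambda_{L(m_\lambda)} = \{\lambda\}$ on each piece, and either choice leaves some single-maximal classes $[m_b/v]$ empty despite a chain $m_b > b > v$ (yours for $b \ne b(v)$; the paper's for every such $b$, since all the relevant height-one nodes get dumped into a multi-maximal class $[\mathcal N/v]$ with $|\mathcal N| \ge 2$), so both constructions lean equally on Lemma \ref{refine} to repair the $K$-poset conditions before concluding. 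Neither gains substantively over the other; your single-choice routing is a clean, correct variant of the same argument.
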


\begin{proof} 
First we make an observation about $\Lambda_V:$ If $a, b$ are distinct elements in $\Lambda_V,$ then $G_V(a) \cap G_V(b) = \{m\}.$ Indeed, if $t \ge a$ and $t \ge b$ then $\height t \ge 1.$ If $\height t = 2,$ then $t = m.$ Assume $\height t = 1$ and $t \ne m.$ If both $a$ and $b$ are minimal, then $t \in H_1\cap \mathcal H_V^*,$ and this conflicts with $G_V(a) \cap H_1 \cap \mathcal H_V^*$ being empty. Thus, $\height a = 1$ without loss of generality and $t = a \ge b.$ Seeing as $a \in \Lambda_V,$ we must have $a \in H_1 \cap \mathcal H_V^*.$ But this is impossible since then $G_V(b) \cap H_1 \cap \mathcal H_V^*$ would be nonempty.  Therefore, $t = m.$ 

Enumerate the elements of $\Lambda_V$ as $a_1, \ldots, a_n.$ Let $\mathcal M$ be any set of $n$ elements disjoint from $V$ enumerated as $m_1, \ldots, m_n.$ Define $W:= V\setminus\{m\} \cup \mathcal M.$ We place an order relation on $W.$ First declare $u \le_{W} u$ for all $u \in W,$ and let $V\setminus\{m\}$ inherit its order from $V.$ If $m_i \in \mathcal M$ and $x \in W,$ set $m_i \le_{W} x$ iff $x = m_i.$ We thus only need to relate the set $\mathcal M$ to $V\setminus\{m\}$ in a sound manner. Suppose $v \in V\setminus\{m\}$ and $m_i \in \mathcal M.$ Note that $\height_W v \le 1.$ We will use the set $L_V(v) \cap \Lambda_V$ to determine how to relate $v$ to $m_i.$ Either it is empty or not. If it is nonempty, then the previous paragraph implies that $L_V(v) \cap \Lambda_V = \{a_j\}$ for some $a_j \in \Lambda_V.$ In this case, we declare $v \le_{W} m_i$ iff $i = j.$ If it is empty, we claim that $v$ dominates exactly one minimal node $v'.$ Indeed, if $v \in \mathcal H_V^*$ then $v \in \min V$ since otherwise $v$ would live in $\Lambda_V.$ In such case, we have $v = v'.$ If $v \notin \mathcal H_V^*,$ then by definition $v$ must dominate a single minimal node $v' < v.$ Since $L_V(v) \cap \Lambda_V$ is empty, it follows that $v' \notin \Lambda_V$ and the set $G(v') \cap H_1 \cap \mathcal H_V^*$ is nonempty. Declare $v \le_{W} m_i$ iff  $a_i \in G(v') \cap H_1 \cap \mathcal H_V^*.$ 

We claim these relations turn $W$ into a poset. We need only check transitivity since the other relations are clear. Assume $a < b < c,$ and not all three belong to $V\setminus\{m\}.$ If $c = m_i$ for some $i,$ then $a, b \in V\setminus \{m\}$ and satisfy $a <_V b.$ Note that $a \in \min V$ since $\dim V = 2.$ We must show $a <_{W} m_i.$ If $L_V(b) \cap \Lambda_V$ is empty, then $a$ must be the unique node dominated by $b.$  Since $b<_{W} m_i,$ we must have $a_i \in G(a) \cap H_1 \cap \mathcal H_V^*,$ and therefore $a<_{W} m_i$ by definition. If $L_V(b) \cap \Lambda_V$ is nonempty, then $b <_{W} m_i$ means that $L_V(b) \cap \Lambda_V = \{a_i\}.$ Either $a = a_i$ or $b = a_i$ (since $\height_W b = 1$). In either case, we have $a <_W m_i.$ 

We claim $\Lambda_{L_{W}(m_i)} = \{a_i\}.$ First, we show $a_i \in \Lambda_{L_{W}(m_i)}.$ Since $a_i <_{W} m_i$ by definition of $\le_{W},$ we see that $a_i \in L_{W}(m_i).$ If $\height a_i = 1$ in $V,$ then $a_i \in \Lambda_V$ implies that $a_i \in H_1 \cap \mathcal H_V^*.$ Since $V \setminus\{m\} \subset W$ inherits its order from $V,$ it follows that $a_i \in H_1 \cap \mathcal H_{L_{W}(m_i)}^* \subset \Lambda_{L_{W}(m_i)}.$ If $\height a_i = 0$ in $V,$ then $a_i \in \min V = \min W.$ Assume there is $b \in G_{L_{W}(m_i)}(a_i) \cap H_1 \cap \mathcal H_{L_{W}(m_i)}^*.$ Since $b$ dominates at least two minimal nodes in $W,$ the same must be true in $V,$ and we see that in fact $b \in H_1 \cap \mathcal H_V^*.$ Of course, $b \in G_V(a_i)$ as well, and it follows that $G_V(a_i) \cap H_1 \cap \mathcal H_V^*$ is nonempty, contradicting $a_i \in \Lambda_V.$ Therefore $a_i \in \Lambda_{L_{W}(m_i)}.$ Now we show if $x \in \Lambda_{L_{W}(m_i)},$ then $x = a_i.$ Indeed, if $x \in H_1,$ then $x \in \Lambda_V.$ In this case, $x = a_j$ for some $j,$ and since $x <_W m_i$ we must have $j = i$ so that $x = a_i.$ If $x \notin H_1,$ then it must be minimal in $W.$ If $L_V(x) \cap \Lambda_V$ is nonempty, then of course $x = a_i.$ Otherwise, we must have $a_i \in G_V(x) \cap H_1 \cap \mathcal H_V^*$ which implies that $a_i \in G_{L_W(m_i)}(x) \cap H_1 \cap \mathcal H_{L_W(m_i)}^*,$ contradicting $x \in \Lambda_{L_W(m_i)}.$ In particular, $d_{L_W(m_i)} = 1.$ 

Let $\varphi': {W} \rightarrow V$ be the map $\varphi'(u) = u$ if $u \in V\setminus \{m\},$ and $\varphi'(u) = m$ if $u \in \mathcal M.$ Since each $u \in W$ is dominated by some $m_i \in \mathcal M,$ the second property of splitting maps is surely satisfied and it is easy to see that $\varphi'$ is a splitting map turning $W$ into a splitting of $V$ at $m.$ By Lemma \ref{refine}, there exists a proper $K$-poset $U \supseteq W$ with $|U| = |V|, \mathcal H_U = \mathcal H_{W},  \varphi' = \varphi$ on $\mathcal H_{W},$ and for which $U$ is a splitting of $V$ at $m$ with splitting map $\varphi.$ Since $\mathcal H_U = \mathcal H_{W},$ it follows that $\mathcal H_{L_U(m_i)} = \mathcal H_{L_{W}(m_i)}$ so that $d_{L_U(m_i)}(m_i) = d_{L_{W}(m_i)}(m_i) = 1$ for all $m_i \in \mathcal M.$ \end{proof}

\section{Simple $K$-posets and Classifications}\label{kclass}
\begin{defn} A proper $K$-poset $V$ is called \text{simple} iff $d_{L_V(m)}(m) = 1$ for all nodes $m \in \max V$ of positive height. \end{defn} 

 Let $\alpha$ be a cardinal number. We define three kinds of posets. If $V = \{x\}$ with the trivial order relation $x \le_V x,$ then $V$ is said to be a point. If $\dim V = 1, |\min V| = 1, |\max V| = \alpha$ then $V$ is said to be an $\alpha$-fan. Now let $k$ be a positive integer, let $\alpha_1, \ldots, \alpha_k$ be $k$ disjoint sets of cardinality $\alpha,$ and let $m, t, t_1, \ldots, t_k$ be $k+2$ separate elements. Let $V = \{m, t, t_1, \ldots, t_k\}\cup_{i=1}^k \alpha_i.$ We put an order relation on $V.$ The nodes $t_1, \ldots, t_k$ will be minimal nodes of $V.$ Declare $t > t_i$ for all $1 \le i \le k.$ Declare $m \ge x$ for all $x \in V.$ Finally, demand that $y \ge t_i$ iff $y = t_i, y= t, y\in \alpha_i$ or $y = m.$ If $x, y \in \{t\} \cup \cup_{i=1}^k \alpha_i,$ then declare $x \le y$ iff $x = y.$ We call $V$ an $\alpha$-tent. The following familiar poset $T$ is an $\aleph_0$-tent: 
 
\begin{center}
\begin{tikzpicture}[scale=2]
\node(p1) at (-1,0) {$t_1$};
\node(p2) at (1,0) {$t_2$};
\node(t) at (0, 1) {$t$};
\node(inf1) at (-2,1) {$\aleph_0$};
\node(inf2) at (2, 1) {$\aleph_0$};
\node(m) at (0, 2) {$m$};
\draw (t) -- (p1) -- (inf1) -- (m) -- (t) -- (p2) -- (inf2) -- (m);
\end{tikzpicture}
\end{center}

 \begin{thm}\label{classificationofkposets} If $V$ is an $\alpha$-tent, an $\alpha$-fan, or a point, then $V$ is simple. Conversely, if $V$ is a simple, proper $K$-poset with a single maximal node $m,$ then $V$ is either a point, a 1-fan, or a $\beta$-tent for some cardinal number $\beta.$ \end{thm}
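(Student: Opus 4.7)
The plan is to argue each implication directly from the definitions of $\Lambda_V$ and $d_V(m)$. For the forward direction I would, for each of the three poset types and each maximal node $m$ of positive height, compute $\Lambda_{L_V(m)}$. A point has no maximal node of positive height, so simplicity is vacuous. In an $\alpha$-fan each maximal $n$ has $L_V(n)$ a 2-chain whose unique minimal $v$ produces $H_1 \cap \mathcal H_{L_V(n)}^* = \emptyset$ and hence $\Lambda_{L_V(n)} = \{v\}$ by the vacuous condition. In an $\alpha$-tent with $k$ legs and central node $t$, the unique element of $\Lambda_V$ is $t$ when $k \ge 2$ (as $t$ is the only height-one multi-dominator and each leg $t_i$ sees $t \in G(t_i) \cap H_1 \cap \mathcal H_V^*$), and is $t_1$ when $k = 1$ (as then $H_1 \cap \mathcal H_V^* = \emptyset$). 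In every case $d = 1$.

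The converse splits by dimension. If $\dim V = 0$ then $V = \{m\}$ is a point. If $\dim V = 1$ then $m \in H_1$ is the only height-one node, so $H_1 \cap \mathcal H_V^* = \emptyset$ and the vacuous condition gives $\Lambda_V = \mathcal H_V^* = \min V$; simplicity forces $|\min V| = 1$, which is the 1-fan.

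The main content is the case $\dim V = 2$. Write $\Lambda_V = \{a\}$ and split on $\height a$. If $a \in H_1$, then $a$ is the unique height-one element of $\mathcal H_V^*$, hence the only height-one multi-dominator. For each minimal $b$, since $b \notin \Lambda_V$ the set $G(b) \cap H_1 \cap \mathcal H_V^* = G(b) \cap \{a\}$ must be nonempty, forcing $a > b$; thus $a$ dominates every minimal. The height-one nodes then partition as $\{a\} \sqcup \bigsqcup_i [m/t_i]$, and applying condition (3) of the $K$-poset axioms to the chain $m > a > t_i$ together with properness makes $|[m/t_i]| = |V|$ for each $i$. This exhibits $V$ as a $|V|$-tent on $k = |\min V| \ge 2$ legs. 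If instead $\height a = 0$, any additional minimal would yield a height-one multi-dominator distinct from $a$ sitting in $\Lambda_V$, a contradiction; so $|\min V| = 1$, all height-one nodes lie in the single class $[m/a]$, and $|[m/a]| = |V|$ by condition (3) and properness, which is exactly a 1-tent (the distinguished $t$ of the tent definition being indistinguishable from the $\alpha_1$ nodes when $k = 1$).

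The key difficulty will be in the $\dim V = 2$ argument: one must leverage the tight constraint $|\Lambda_V| = 1$ to pin down which height-one nodes can multi-dominate, and then use properness to force every remaining single-dominator class to have the uniform cardinality $|V|$ demanded by the rigid tent definition.
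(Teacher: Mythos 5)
Your proof is correct and takes essentially the same approach as the paper's: compute $\Lambda_{L_V(m)}$ directly for each of the three shapes in the forward direction, and in the converse, split by $\dim V$ and then pin down the unique element $a$ of $\Lambda_V$. The only cosmetic difference is that you case-split on $\height a \in \{0,1\}$ where the paper splits on $|\min V| = 1$ vs.\ $|\min V| > 1$ (and shows $\height q = 1$ in the latter case); these two decompositions are interchangeable, and the substantive steps — $a$ is the unique height-one multi-dominator, every minimal lies under $a$, and properness plus $K$-poset condition (3) force $|[m/t_i]| = |V|$ — are the same.
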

 \begin{proof}
If $V$ is either a point or a fan, then $V$ is simple. In the former case, there is nothing to prove, and in the latter case clearly $\Lambda_{L_V(m)} = \min V$ for all $m \in \max V.$ Assume $V$ is an $\alpha$-tent. Suppose that $\min V = \{t_1\}.$ Since $V$ has only one minimal node, it follows that $H_1 \cap \mathcal H_V^*$ is empty. In particular, $G(t_1) \cap H_1 \cap \mathcal H_V^*$ is empty so that $t_1 \in \Lambda_V.$ It is clear that no other node in $V$ can belong to $\Lambda_V,$ and therefore $d_V(m) = 1$ in the case where $|\min V| = 1.$ Now assume $|\min V| > 1.$ Note that $\mathcal H_V^* = \{t, t_1, \ldots, t_k\},$ where $k > 1.$ Since $t > t_i$ for all $1 \le i \le k,$ we must have $\Lambda_V = \{t\}$ and $d_V(m) = 1.$ 

Now assume $V$ is a simple poset with a single maximal node $m.$ If $\dim V = 0,$ then $V$ is clearly a point. If $\dim V = 1,$ then $d_V(m) = 1$ means that $|\Lambda_V| = 1.$ Since $\mathcal H_V^* = \min V,$ we must have $\min V \subseteq \Lambda_V$ since trivially $G(a) \cap H_1 \cap \mathcal H_V^*$ is empty for all $a \in \min V.$ In particular, $|\min V| = 1$ and $V$ must be a 1-fan. Assume $\dim V = 2.$ Write $\beta = |V|, k = |\min V|, \Lambda_V = \{q\}.$ If $k = 1,$ then $V$ is clearly a $\beta$-tent since it is a proper $K$-poset. In this case, $q$ is the unique minimal node of $V.$ Assume $k > 1.$ We first show that $q$ cannot be minimal. Assume the contrary, and write $q = t_j$ for some $1 \le j \le k.$ Since $\height q = 0,$ we must have that $H_1 \cap \mathcal H_V^*$ is empty. Since $k > 1,$ there exists $t_{j'}$ different from $q,$ and both $G(t_{j'})$ and $G(q)$ must vacuously miss $H_1 \cap \mathcal H_V^*.$ However, this forces $|\Lambda_V| \ge 2,$ which contradicts simplicity. In particular, $\height q = 1.$ Since $V$ is simple, no other height one node belongs to $\mathcal H_V^*,$ and it follows that any node $q'$ dominating at least two minimal nodes of $V$ must coincide with $q.$ In particular, if $t_l \not < q$ for some $l,$ then $G(t_l)$ must miss $H_1 \cap \mathcal H_V^*$ which implies $|\Lambda_V| \ge 2.$ Therefore, $t_l \le q$ for all $1 \le l \le k,$ and since $V$ is a proper $K$-poset, we see that $[m/t_l]$ has $\beta$ nodes for all $1 \le l \le k.$ $V$ is thus seen to be a $\beta$-tent. \end{proof} 
Notice that the minimal upper bound information in tents, fans and points is quite easy to understand. This is attractive to us because if we can reduce the spectra we need to construct to such ``simple" posets, then we would not ever be required to find Noetherian spectra with hard minimal upper bound sets. 

\begin{defn} Let $$\mathcal V:  \ldots \rightarrow V_{\ell} \rightarrow V_{\ell-1} \rightarrow \ldots \rightarrow V_1 \rightarrow V_0 := V$$ be a chain of proper $K$-posets for which $V_i$ is a splitting of $V_{i-1}$ at some maximal node $n_i \in V_{i-1}$ for all $i \ge 1,$ and $\varphi_i: V_i \rightarrow V_{i-1}$ is a splitting map. We call $\mathcal V$ a \textit{simplifying chain} of $V.$ If there exists $\ell \ge 0$ for which $V_{\ell}$ is simple, we refer to $V_{\ell}$ as a \textit{simplification} of $V.$ \end{defn}

We aim to show that every proper $K$-poset has a simplification. Note that Theorem \ref{localsimplify} accomplishes this in the case where $V$ has a single maximal node $m.$ In order to accomplish this in general, we will need a lemma that allows us to extend splittings of subposets of $V$ to the whole of $V.$ 

\begin{lem}\label{expand} Let $X, Z$ be posets and let $f$ be a poset map from $X$ to $Z$ such that whenever $x \in X,$ we have $L_Z(f(x)) \subseteq f(X).$ Then there exists a poset $Y \supseteq X$ and a surjective poset map $g: Y \rightarrow Z$ for which the following statements hold:
\begin{enumerate}
\item[1] We have $g|_X = f.$ 
\item[2] If $f(n) \in \max Z,$ then $n \in \max Y$ and $d_{L_X(n)}(n) = d_{L_Y(n)}(n).$ 
\item[3] If $\max f(X) \subseteq \max Z,$ and $f$ is a splitting map from $X$ to $f(X),$ then $g$ is a splitting map from $Y$ to $Z.$ 
\end{enumerate}
\end{lem}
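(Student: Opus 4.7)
The plan is to let $Y$ be the disjoint union of $X$ with a fresh copy $\tilde z$ of every $z \in Z\setminus f(X),$ and to define $g\colon Y\to Z$ by $g|_X = f$ and $g(\tilde z) = z.$ The order on $Y$ will inherit the existing order on $X$ and on the copy of $Z\setminus f(X)$ (via $\tilde z \le_Y \tilde z'$ iff $z \le_Z z'$), plus the one-way comparisons $x \le_Y \tilde z$ iff $f(x) \le_Z z,$ together with the rule that no $\tilde z$ ever sits below any element of $X.$ This asymmetry is forced by the hypothesis: if one tried to put $\tilde z \le_Y x$ whenever $z \le_Z f(x),$ then $z \in L_Z(f(x)) \subseteq f(X)$ would contradict $z \notin f(X).$

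Checking the poset axioms is routine; transitivity reduces to a short case analysis that collapses because no new element ever sits below anything in $X.$ The map $g$ is manifestly a surjective poset map, giving property 1. For property 2, if $n \in \max X$ with $f(n) \in \max Z,$ then $n$ cannot be strictly exceeded in $Y$ by any element of $X$ (by maximality there) nor by any $\tilde z$ (since $f(n) \le_Z z$ with $z \ne f(n)$ would violate $f(n) \in \max Z$), so $n \in \max Y.$ Moreover $L_Y(n) = L_X(n),$ since no new element lies below $n,$ hence $d_{L_Y(n)}(n) = d_{L_X(n)}(n).$

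For property 3, assume $f\colon X \to f(X)$ is a splitting map at a maximal node $m$ with distinguished set $\mathcal M \subseteq \max X,$ and that $\max f(X) \subseteq \max Z.$ The fibre counts work out: $g^{-1}(m) = f^{-1}(m) = \mathcal M$ because no new element of $Y$ maps into $f(X);$ every fibre over $v \in f(X)\setminus\{m\}$ is the singleton $f^{-1}(v);$ and every fibre over $v \in Z\setminus f(X)$ is $\{\tilde v\}.$ Property 2 of the present lemma yields $\mathcal M \subseteq \max Y$ with heights preserved. The lifting condition — given $g(y') \le y$ in $Z,$ find $w \ge y'$ in $Y$ with $g(w) = y$ — splits into four cases depending on whether $y'$ and $y$ are new or come from $X.$ The only delicate case is $y' = \tilde z_1$ new with $y \in f(X),$ which is vacuous: $z_1 \le_Z y$ with $y \in f(X)$ forces $z_1 \in L_Z(y) \subseteq f(X),$ contradicting $z_1 \notin f(X).$ The remaining three cases lift either via the splitting property of $f$ itself (when $y', y$ are both old) or by taking $w = \tilde y.$

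The main obstacle is really picking the order between $X$ and the new copies so that maximality, the $d$-invariant, and the splitting lifting property all survive simultaneously. The hypothesis $L_Z(f(x)) \subseteq f(X)$ is exactly what makes the asymmetric choice consistent and, at the same time, eliminates the one problematic lifting case in property 3.
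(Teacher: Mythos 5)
Your construction — take $Y = X \sqcup (Z\setminus f(X))$, let $g$ act as $f$ on $X$ and as the identity on the new copy, and order $Y$ by inheritance within each piece plus cross-comparisons pulled back through $g$ — is precisely the construction the paper uses, and your verification of all three items runs along the same lines. Your ``asymmetric'' description of the cross-order (no new element ever sits below anything in $X$) is equivalent to the paper's symmetric clause $a \le_Y b$ iff $g(a) \le_Z g(b)$, since the hypothesis $L_Z(f(x)) \subseteq f(X)$ makes the new-below-old comparisons vacuous; you have merely made that vacuity explicit from the start, which also cleanly isolates the one vacuous case in the item 3 lifting argument.
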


\begin{proof} 

\textbf{Item 1:} Let $Y' := Z\setminus f(X) \subseteq Z$ with the induced order from $Z$ placed on $Y'.$   Let $Y:= X \cup Y'$ be the disjoint union of $Y'$ and $X.$ Note that we have a natural surjective set map $g: Y \rightarrow Z$ which is given by $g(a) = f(a)$ if $a \in X$ and $g(a) = a$ if $a \notin X.$ 

We place an order $\le_Y$ on $Y.$ Let $a, b \in Y.$ If $a, b \in X$ then put $a \le_Y b$ iff $a \le_X b$ in $X.$ If $a, b \in Y',$ then put $a \le_Y b$ iff $a \le_Z b$ in $Z.$ Otherwise, put $a \le_Y b$ iff $g(a) \le_Z g(b)$ in $Z.$ It is clear that $a \le_Y a$ for all $a \in Y.$ Suppose $a \le_Y b$ and $b \le_Y a$ for $a, b \in Y.$ If both $a, b$ lie in either $X$ or $Y',$ then of course $a = b.$ It is not possible, in this case, for $a$ to live in one of $X$ or $Y'$ and $b$ to live in the other. Indeed assume, without loss of generality, that $a \in X$ and $b \in Y'.$ Then $f(a) \le_Z b$ and $b \le_Z f(a)$ so that $b = f(a).$ In particular, $b \in f(X)$ and $b \notin f(X),$ a contradiction. It follows that $\le_Y$ is reflexive and anti-symmetric. 

We now prove that $\le_Y$ is transitive. First, we make a brief observation. If $u \le_Y v,$ and $v \in X$ then $u \in X$ as well. If $u \notin X,$ then $u \in Z\setminus f(X)$ and $u = g(u) \le_Y f(v) \in f(X).$ By our assumption on $f,$ we have $u \in L_Z(f(v)) \subseteq f(X),$ which is a contradiction. Therefore, $u \in X.$ Consequently, if $a \le_Y b \le_Y c,$ and either $c \in X$ or $a \notin X,$ then transitivity is immediate. Therefore, assume $a \in X$ and $c \notin X.$ If $b \in X,$ then we must obtain $f(b) \le_Z c$ by definition of the order relation on $Y,$ and since $f: X \to Z$ is a poset map, we must also have $f(a) \le_Z f(b)$ so that $f(a) \le_Z c.$ This implies $a \le_Y c$ in $Y.$ If $b \notin X,$ then $b \le_Z c$ and $f(a) \le_Z b$ by definition of the order relation on $Y.$ Since $Z$ is a poset, we have $f(a) \le_Z c$ in $Z,$ and therefore $a \le_Y c$ again by the order relations. 

\textbf{Item 2:} Suppose $n \le n'$ for some $n' \in Y.$ If $n' \notin X,$ then $f(n) <_Z n',$ which is nonsense since $f(n) \in \max Z.$ The second part of item 2 follows from the first part of the previous paragraph which asserts, in our case, that $L_X(n) = L_Y(n).$

\textbf{Item 3:} Suppose $f$ is a splitting map from $X$ to $f(X)$ for some $m \in \max f(X) \subseteq \max Z.$ Let $\mathcal M \subseteq X$ be the set of maximal nodes of $X$ that agree under $f.$ It is clear that $\mathcal M \subseteq \max Y$ and $g^{-1}(m) = \mathcal M.$ Moreover, if $z \in Z,$ then it is the image of exactly one node from $Y,$ unless $z = m,$ since $f$ is a splitting map, and $g$ is the identity outside of $X.$ Now we show that $g$ has the required lifting property of splitting maps. Assume $g(a_1) = b_1 \le_Z b_2.$ If $b_1, b_2$ both belong to either $Y'$ or $f(X)$ then clearly there exists $a_2 \ge_Y a_1$ for which $g(a_2) = b_2.$ Otherwise, if $b_1 \in Y'$ and $b_2$ is not, then upon choosing $a_2$ for which $g(a_2) = b_2$ we see that $a_1 \le_Y a_2$ automatically by definition of the order on $Y.$ \end{proof}

\begin{lem}\label{preserveD} If $U$ is a splitting of $V$ at $m$ with splitting map $\varphi,$ then $d_{L_U(n)}(n) = d_{L_V(\varphi(n))}(\varphi(n))$ for all $n \notin \mathcal M.$ \end{lem}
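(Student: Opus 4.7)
The plan is to show that $\varphi$ restricts to a poset isomorphism between $L_U(n)$ and $L_V(\varphi(n))$, from which the equality $d_{L_U(n)}(n) = d_{L_V(\varphi(n))}(\varphi(n))$ follows because $\Lambda$ and hence $d$ are invariants of the isomorphism type of the down-set of a maximal node. The whole lemma really reduces to producing and verifying this local isomorphism, so I will concentrate on that.

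First I would note that since $n \notin \mathcal{M}$, we have $\varphi(n) \neq m$, so every $y \le_V \varphi(n)$ satisfies $y \neq m$ (otherwise $m \le \varphi(n) < \max V$ forces $m = \varphi(n)$, contradiction), and therefore property 1 of a splitting map ensures $\varphi^{-1}(y) = \{y'\}$ is a singleton for every such $y$. Let $\varphi_n := \varphi\big|_{L_U(n)}$; clearly $\varphi_n$ lands in $L_V(\varphi(n))$ because $\varphi$ is a poset map. For surjectivity of $\varphi_n$, take any $y \in L_V(\varphi(n))$ and let $y'$ be its unique preimage. Property 2 of splitting maps applied to $\varphi(y') = y \le \varphi(n)$ produces some $n^\ast \in U$ with $y' \le_U n^\ast$ and $\varphi(n^\ast) = \varphi(n)$; since $\varphi(n) \neq m$, uniqueness of preimages forces $n^\ast = n$, so $y' \in L_U(n)$. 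Injectivity of $\varphi_n$ is immediate from the same singleton property.

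Next I would verify that $\varphi_n$ is order-reflexive, i.e.\ an isomorphism rather than just a bijective poset map. Given $y'_1, y'_2 \in L_U(n)$ with $\varphi(y'_1) \le_V \varphi(y'_2)$, apply property 2 to $\varphi(y'_1) \le \varphi(y'_2)$ to get $z \in U$ with $y'_1 \le_U z$ and $\varphi(z) = \varphi(y'_2)$. Since $\varphi(y'_2) \le \varphi(n) \neq m$, we again have a singleton preimage, so $z = y'_2$ and thus $y'_1 \le_U y'_2$. This shows $\varphi_n$ is a poset isomorphism $L_U(n) \xrightarrow{\sim} L_V(\varphi(n))$.

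Finally, because the heights of nodes within $L_U(n)$ (respectively $L_V(\varphi(n))$) are computed internally to those sub-posets, and the definitions of $\mathcal{H}$, $\mathcal{H}^\ast$, $G(\cdot)$, $L(\cdot)$, and consequently $\Lambda$ depend only on the order structure, every item in the definition of $\Lambda_{L_U(n)}$ transports under $\varphi_n$ to the corresponding item in $\Lambda_{L_V(\varphi(n))}$ and vice versa. Hence $|\Lambda_{L_U(n)}| = |\Lambda_{L_V(\varphi(n))}|$, which is the desired equality $d_{L_U(n)}(n) = d_{L_V(\varphi(n))}(\varphi(n))$. The only mildly tricky point is the double use of splitting-map property 2 together with the singleton-preimage conclusion to force the lifted element to be the expected one; once that is in hand, the remainder is a bookkeeping check that $\Lambda$ is isomorphism-invariant.
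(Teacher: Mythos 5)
Your proof is correct, and it supplies exactly the detail the paper elides (the paper simply states the result is ``a straightforward consequence of the splitting properties of $\varphi$''). The key observation — that for $n \notin \mathcal M$, property 1 makes preimages below $\varphi(n)$ unique and property 2 then forces $\varphi$ to restrict to an order isomorphism $L_U(n) \xrightarrow{\sim} L_V(\varphi(n))$, after which $\Lambda$ and $d$ are isomorphism invariants — is the natural argument and is carried out correctly, including the order-reflexivity check.
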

\begin{proof}
This is a straightforward consequence of the splitting properties of $\varphi.$ \end{proof}

\begin{thm}\label{globalsimplify} Every proper $K$-poset $V$ has a simplification. \end{thm}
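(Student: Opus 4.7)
The plan is to induct on the cardinality of the set
$$N(V) := \{m \in \max V : \height m \geq 1 \text{ and } d_{L_V(m)}(m) > 1\}.$$
First I would check $|N(V)| < \infty$. The height-two contribution is bounded by $|H_2|$, which is finite by the $K$-poset axioms. For a height-one maximal node $m$ with $d_{L_V(m)}(m) > 1$, the node $m$ dominates two distinct minimal nodes $w \neq w'$; any strict intermediate between $\{w, w'\}$ and $m$ would force $\height m \geq 2$, so $m \in \mub\{w, w'\}$. Finiteness of $\min V$ and of each $\mub\{w, w'\}$ then bounds the height-one contribution, so $|N(V)|$ is finite. The base case $|N(V)| = 0$ is immediate, since by definition $V$ is then simple.

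For the inductive step, fix $m \in N(V)$. I would first verify that $L_V(m)$, viewed as a poset with unique maximal node $m$, is itself a proper $K$-poset: the minimal and height-two sets of $L_V(m)$ are finite subsets of their $V$-counterparts; $\mub_{L_V(m)}\{u, v\} \subseteq \mub_V\{u, v\}$ keeps minimal upper bound sets finite; intersection classes $[a/c]$ computed in $L_V(m)$ coincide with those in $V$ because they sit below $a \le m$; and in the non-empty case the condition $|[m/w]_V| = |V|$ forces $|L_V(m)| = |V|$, delivering properness. Theorem \ref{localsimplify} now supplies a proper $K$-poset $X$ splitting $L_V(m)$ at $m$, with splitting map $f : X \to L_V(m)$ and $\mathcal M \subseteq \max X$ satisfying $d_{L_X(m_i)}(m_i) = 1$ for every $m_i \in \mathcal M$.

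Next I would lift this local splitting to a splitting of all of $V$. Apply Lemma \ref{expand} with $Z = V$, viewing $f$ as landing in $V$ via $L_V(m) \hookrightarrow V$; the required hypothesis $L_V(f(x)) \subseteq f(X) = L_V(m)$ is automatic because $f(x) \le m$, and $\max f(X) = \{m\} \subseteq \max V$. Item~3 yields $Y \supseteq X$ with splitting map $g : Y \to V$ that is a splitting of $V$ at $m$, while item~2 preserves $d_{L_Y(m_i)}(m_i) = d_{L_X(m_i)}(m_i) = 1$. Lemma \ref{refine} upgrades $Y$ to a proper $K$-poset $Y^+ \supseteq Y$, still a splitting of $V$ at $m$, with $\mathcal H_{Y^+} = \mathcal H_Y$; as in the closing argument of the proof of Theorem \ref{localsimplify}, this forces $\mathcal H_{L_{Y^+}(m_i)} = \mathcal H_{L_Y(m_i)}$ and hence preserves $d_{L_{Y^+}(m_i)}(m_i) = 1$. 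Lemma \ref{preserveD} applied to $g^+ : Y^+ \to V$ gives $d_{L_{Y^+}(n)}(n) = d_{L_V(g^+(n))}(g^+(n))$ for every maximal $n \notin \mathcal M$, so $|N(Y^+)| = |N(V)| - 1$. The inductive hypothesis delivers a simplification of $Y^+$, and prepending the splitting $Y^+ \to V$ produces a simplifying chain of $V$ terminating at a simple poset.

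The hardest part, I expect, is not any single construction but the careful bookkeeping required to track the $d$-statistic through the composition $V \to Y \to Y^+$: one must verify that Lemma \ref{expand} introduces no new maximal node of positive height with $d > 1$, that the refinement of Lemma \ref{refine} does not disturb the $d$-values at the newly split maximals (the clause $\mathcal H_{Y^+} = \mathcal H_Y$ is doing real work here), and that $L_V(m)$ really is a proper $K$-poset so that Theorem \ref{localsimplify} is applicable. None of these checks is deep individually, but they must all hold simultaneously for the induction to close.
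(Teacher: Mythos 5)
Your argument mirrors the paper's proof step-for-step: induct on the number of positive-height maximal nodes with $d > 1$, apply Theorem~\ref{localsimplify} to $L_V(m)$, extend with Lemma~\ref{expand}, refine with Lemma~\ref{refine}, and control the $d$-statistic via Lemma~\ref{preserveD}. You are more explicit than the paper in two places the paper leaves implicit: verifying that the relevant count $|N(V)|$ is finite, and verifying that $L_V(m)$ is a proper $K$-poset so that Theorem~\ref{localsimplify} applies. One small inaccuracy in the latter check: the class $[m/w]$ computed inside $L_V(m)$ need not \emph{coincide} with $[m/w]_V$ --- passing to $L_V(m)$ can strictly enlarge it, since a node $v \in L_V(m)$ may lie below additional maximal nodes of $V$ outside $L_V(m)$ that are erased when computing $G(v)^*$ in the subposet --- but the cardinality conclusion survives, since $[m/w]_{L_V(m)} \supseteq [m/w]_V$, and $[m/w]_{L_V(m)}$ nonempty forces some $m > v > w$, so $[m/w]_V$ is nonempty and by properness of $V$ has cardinality $|V| \geq |L_V(m)|$, whence $|[m/w]_{L_V(m)}| = |L_V(m)|$ as required.
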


\begin{proof} Let $V:=V_0$ be a proper $K$-poset, and assume there is a maximal node $m \in \max V_0$ for which $d_{L_{V_0}(m)}(m) > 1.$ Choose, by Theorem \ref{localsimplify}, a proper, simple $K$-poset $U_1$ that is a splitting of $L_{V_0}(m)$ at $m,$ and that satisfies $d_{L_{U_1}(m_i)}(m_i) = 1$ for all $m_i \in \mathcal M.$ Of course, $U_1$ may not be a splitting of $V_0$ at $m,$ and we must now invoke Lemma \ref{expand} to find poset $W_1 \supseteq U_1$ such that both $W_1$ is a splitting of $V_0$ at $m$ and $d_{L_{U_1}(m_i)}(m_i) = d_{L_{W_1}(m_i)}(m_i) = 1$ for all $m_i \in \mathcal M = \max W_1 = \max U_1.$ Now $W_1$ may not be a proper $K$-poset, so we will simply apply Lemma \ref{refine} to obtain a proper $K$-poset $V_1 \supseteq W_1$ for which $d_{L_{W_1}(m_i)}(m_i) = d_{L_{V_1}(m_i)}(m_i)$ for all $m_i \in \mathcal M\subseteq \max V_1,$ and for which $V_1$ is a splitting of $V_0$ at $m.$ By Lemma \ref{preserveD}, we have $d_{L_{V_1}(n)}(n) = d_{L_{V_0(\varphi(n))}}(\varphi(n))$ for all $n \notin \mathcal M.$ Therefore, if $n \in \max V_1$ either $d_{L_{V_1}(n)}(n) = 1$ or agrees with $d_{L_{V_0}(\varphi(n))}(\varphi(n)).$ In particular, if $e(U)$ is the number of maximal nodes $n$ of a proper $K$-poset for which $d_{L_U(n)}(n) > 1,$ then $e(V_1) < e(V).$ An induction on $e(V)$ will therefore obtain the desired result. \end{proof}

\section{Gluing maximal ideals and splitting $K$-posets}\label{duality}

\begin{thm}\label{iso} Let $U$ be a splitting of $V$ at $m$ with splitting map $\varphi,$ and suppose $U \cong \Spec R$ for some Noetherian ring $R$ for which $R/M \cong R/N$ for all maximal prime ideals $M, N$ of $R.$ Then there exists a Noetherian ring $L \subseteq R$ for which $\Spec L \cong V.$ \end{thm}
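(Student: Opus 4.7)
The plan is to construct $L$ by the classical ``gluing'' of the maximal ideals in the fiber $\mathcal{M} := \varphi^{-1}(m) = \{M_1,\ldots,M_n\}$. Since all residue fields $R/M_i$ are isomorphic to a common field $k$, I would fix isomorphisms $\tau_i : R/M_i \to k$ and set
\[ L := \{ r \in R : \tau_1(r + M_1) = \cdots = \tau_n(r + M_n)\}, \]
a subring of $R$ containing $J := M_1 \cap \cdots \cap M_n$. By the Chinese Remainder Theorem $R/J \cong k^n$, while $L/J$ embeds as the diagonal copy of $k$. Lifting a $k$-basis of $R/J$ to $R$ and invoking $J \subseteq L$ then shows that $R$ is finitely generated as an $L$-module, whence Eakin--Nagata guarantees that $L$ itself is Noetherian.

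Next I would analyze the contraction map $\pi : \Spec R \to \Spec L$. Because $J$ is visibly contained in the conductor of $R$ over $L$, for any $s \in J$ the localization $L_s \to R_s$ is an isomorphism, so each $P \in \Spec R \setminus \mathcal{M}$ is the unique prime of $R$ lying over $P \cap L$. The primes in $\mathcal{M}$, by contrast, all contract to the single maximal ideal $\mathfrak{m} := J$ of $L$ (maximal since $L/J \cong k$). Thus $\pi$ is constant on the fibers of $\varphi$ and factors through $\varphi$ as a set bijection $\beta : V \to \Spec L$.

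The final step is to verify that $\beta$ is an isomorphism of posets. For order-preservation, given $v_1 \le_V v_2$, property 2 of splitting maps produces comparable preimages $Q_1 \subseteq Q_2$ in $\Spec R$ with $\varphi(Q_i) = v_i$, and contraction yields $\beta(v_1) \subseteq \beta(v_2)$. For order-reflexivity, given $\beta(v_1) \subseteq \beta(v_2)$, lying-over and going-up for the integral extension $L \subseteq R$ produce primes $Q_1' \subseteq Q_2$ in $R$ with $Q_1' \cap L = \beta(v_1)$, and then $\varphi(Q_1') = v_1$ because either $Q_1'$ is the unique lift of $\beta(v_1)$ (when $v_1 \ne m$) or $v_1 = m$ and all lifts of $\mathfrak{m}$ lie in $\mathcal{M}$. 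The main obstacle I anticipate is pinning down module-finiteness of $R$ over $L$ cleanly; once that is in hand, the Noetherianity of $L$, the conductor computation, and the spectral bookkeeping all fall out of standard integral-extension theory.
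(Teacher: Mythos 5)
Your pullback construction of $L$ is precisely the gluing of the maximal ideals in $\mathcal M$ that the paper performs, and your verification that the induced map $V \to \Spec L$ is an order isomorphism proceeds along the same lines as the paper's argument with $\tau$. The only substantive difference is cosmetic: the paper invokes \cite[Theorem A]{DL} as a black box for the Noetherianity of $L$, the conductor equality, and the bijection $\Spec R \setminus \mathcal M \to \Spec L \setminus \{J\}$, whereas you re-derive these facts by hand via module-finiteness and Eakin--Nagata.
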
 

\begin{proof} Let $\rho: \Spec R \to U$ be an isomorphism of posets. Then $\varphi \rho: \Spec R \to V$ is a splitting of $V$ at $m.$ Define $\mathcal M = \rho^{-1}\varphi^{-1}(m),$ and glue the maximal prime ideals of $\mathcal M$ together via \cite[Theorem A]{DL} to form a Noetherian subring $L$ of $R.$ Let $M \in \Spec L$ be the conductor $\bigcap_{N \in \mathcal M} N$ of $R$ into $L.$

We now construct an isomorphism from $\Spec L$ onto $V.$ For every $P \in \Spec L,$ let $\pi(P)$ be any choice of lifting into $\Spec R.$ Define $\tau: \Spec L \to V$ to be $\tau(P) = \varphi\rho\pi(P).$ Note that if $\pi(P) \ne \pi(Q),$ then $\pi(P), \pi(Q) \in \mathcal M,$ and therefore $\varphi(\rho(\pi(P))) = \varphi(\rho(\pi(Q))) = m.$  That is, $\tau$ is well-defined as a map of sets. 

Now we show $\tau$ is a poset map with all the required properties. If $P \subseteq Q$ in $\Spec L,$ and $Q \ne M,$ then ``Theorem A" implies that there is only one choice of lifting of $P$ and $Q$ into $\Spec R,$ and thus $\tau(P) \le \tau(Q)$ is immediate. If $Q = M,$ then $\pi(P) \subseteq N$ for some $N \in \mathcal M,$ and consequently $$\varphi(\rho(\pi(P))) = \tau(P) \le m = \tau(Q).$$ Now assume $\tau(P) \le \tau(Q)$ in $V.$ If $\tau(Q) \ne m,$ then the lifting property of splitting maps ensures that $\rho(\pi(P)) \le \rho(\pi(Q))$ and therefore $\pi(P) \subseteq \pi(Q)$ since $\rho$ is an isomorphism. This of course implies that $P \subseteq Q.$ If $\tau(Q) = m,$ then $Q$ must coincide with $M,$ and we must have $\pi(P) \subseteq N'$ for some $N' \in \mathcal M.$ In this case, $P \subseteq N' \cap L = M = Q$ still. Therefore, $\tau$ is an order-reflective poset map. That $\tau$ is onto is clear.\end{proof}

\begin{lem}\label{primeavoidance} Let $R$ be a Noetherian ring, and suppose there exists a set $B$ of $\beta$ elements of $R$ for which $u - v$ is a unit of $R$ for all $u \ne v \in B.$ Let  $\{P_{\lambda}\}_{\lambda \in \Lambda}$ be a set of $\alpha$ prime ideals of $R,$ and assume that $I \subseteq \bigcup_{\lambda \in \Lambda} P_{\lambda} $ for some ideal $I$ of $R.$ If $\alpha < \beta,$ then $I \subseteq P_{\gamma}$ for some $\gamma \in \Lambda.$ \end{lem}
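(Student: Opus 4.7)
My plan is a proof by contradiction. Suppose $I \not\subseteq P_\lambda$ for every $\lambda \in \Lambda$; I will produce an element of $I$ lying outside $\bigcup_\lambda P_\lambda,$ contradicting the hypothesis. The idea is that $B,$ having all pairwise differences units, supplies enough ``scalar-like'' parameters to run a Vandermonde/linear-combination argument even when $|\Lambda|$ is infinite.

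First dispose of the case where $\alpha$ is finite: then the classical prime avoidance lemma applies directly and yields $I \subseteq P_\gamma$ for some $\gamma,$ requiring no input from $B$ at all. So assume $\alpha$ is infinite. Since $R$ is Noetherian, write $I = (a_1, \ldots, a_k),$ and for each $u \in B$ define
\[ y_u \;=\; a_1 + u\,a_2 + u^2 a_3 + \cdots + u^{k-1} a_k \;\in\; I. \]
I claim that for each fixed $\lambda,$ the set $\{u \in B : y_u \in P_\lambda\}$ has cardinality at most $k-1.$ For if there were distinct $u_1, \ldots, u_k \in B$ with each $y_{u_j} \in P_\lambda,$ then the $k\times k$ matrix $V$ with entries $V_{ij} = u_i^{j-1}$ has Vandermonde determinant $\prod_{i<j}(u_j - u_i),$ a product of units in $R$ by hypothesis, hence itself a unit. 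Thus $V$ is invertible over $R,$ and solving the system $V(a_1,\ldots,a_k)^T = (y_{u_1},\ldots,y_{u_k})^T$ expresses each $a_i$ as an $R$-linear combination of the $y_{u_j},$ each of which lies in $P_\lambda.$ This forces $a_i \in P_\lambda$ for every $i,$ so $I \subseteq P_\lambda,$ contradicting the standing assumption.

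Cardinality then closes the argument: the ``bad'' set $\{u \in B : y_u \in \bigcup_\lambda P_\lambda\}$ has cardinality at most $\alpha \cdot (k-1) = \alpha < \beta = |B|,$ so some $u \in B$ satisfies $y_u \in I \setminus \bigcup_\lambda P_\lambda.$ Since $I \subseteq \bigcup_\lambda P_\lambda,$ this is a contradiction.

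The main conceptual step is recognizing the Vandermonde construction and noticing that the unit-difference hypothesis on $B$ is precisely what makes the relevant determinant a unit in $R$; Noetherianness enters through the finite generation of $I,$ which is what makes the $k$-fold construction meaningful. Everything else is classical prime avoidance or routine cardinal arithmetic, so I do not anticipate a substantive obstacle once the construction is in place.
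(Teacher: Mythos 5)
Your proof is correct. The paper itself gives no argument here, merely deferring to the cited reference, and the proof you supply---reduce to $I=(a_1,\ldots,a_k)$, form $y_u=\sum_{j} u^{j-1}a_j$, observe that the Vandermonde determinant $\prod_{i<j}(u_j-u_i)$ is a unit so that any prime containing $k$ of the $y_u$ must contain all the $a_i$, and then count---is the standard self-contained proof of this kind of generalized prime avoidance, so it matches the cited argument in spirit. The finite-$\alpha$ case, the $k=1$ degeneracy, and the cardinal arithmetic $\alpha\cdot(k-1)=\alpha<\beta$ are all handled correctly.
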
 

\begin{proof} The proof of this lemma is almost identical to the proof of \cite[Lemma 14.2]{CMR}. \end{proof} 

Consequently, if a Noetherian $R$ contains a field $k$ of cardinality $\beta,$ then we may set $B = k$ and apply Lemma \ref{primeavoidance} to avoid sets of $\alpha$ prime ideals for $\alpha < \beta.$ 

We wish to significantly reduce the problem of classifying proper $K$-posets to that of classifying proper, simple $K$-posets with finitely many maximal nodes. The reduction to the case where there only finitely many maximal nodes is a fairly straightforward matter. Indeed, if $U$ is any proper $K$-poset, then there exists a proper $K$-poset $U' \supseteq U$ with finitely many maximal nodes and satisfying $\mathcal H_U = \mathcal H_{U'}.$ Realizing $U'$ as $\Spec R'$ for some Noetherian ring $R',$ we see that a suitable localization $S^{-1}R'$ of $R'$ will give $\Spec S^{-1}R' \cong U.$ The next result gives us the rest of our desired reduction.

\begin{thm}\label{reduce} If all proper, simple $K$-posets of cardinality $\beta$ and $|\max U| < \infty$ are realizable as the spectrum of a commutative Noetherian ring with common residue field $k$ for some field $k,$ then all proper $K$-posets of cardinality $\beta$ with $|\max U| < \infty$ are realizable as the spectrum of a Noetherian ring with common residue field $k.$ \end{thm}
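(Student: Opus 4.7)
The plan is to use Theorem \ref{globalsimplify} to reduce $V$ to a simple $K$-poset, realize that simple poset via the hypothesis, and then lift back up the simplifying chain by iterated application of Theorem \ref{iso}.

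Let $V$ be a proper $K$-poset with $|V| = \beta$ and $|\max V| < \infty$. First I would invoke Theorem \ref{globalsimplify} to obtain a simplifying chain
$$V_{\ell} \xrightarrow{\varphi_{\ell}} V_{\ell-1} \xrightarrow{\varphi_{\ell-1}} \cdots \xrightarrow{\varphi_1} V_0 = V$$
ending in a proper, simple $K$-poset $V_{\ell}$. The fiber $\mathcal{M}_i = \varphi_i^{-1}(n_i)$ over the maximal node $n_i$ being split at step $i$ is finite, and $\varphi_i$ is bijective outside this fiber. In the main case $\dim V = 2$, the $K$-poset axiom that $[u/w]$ is infinite whenever $u > v > w$ forces $\beta$ to be infinite, so $|V_i| = \beta$ throughout the chain, and the maximal node count grows only by a finite amount at each step, remaining finite. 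Thus $|V_{\ell}| = \beta$ and $|\max V_{\ell}| < \infty$, matching the form of the hypothesis. (The lower-dimensional cases $\dim V \leq 1$ are either themselves simple or handled by minor modifications of the same argument.)

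By the hypothesis of the theorem, there exists a Noetherian ring $R_{\ell}$ with $\Spec R_{\ell} \cong V_{\ell}$ and $R_{\ell}/M \cong k$ for every maximal ideal $M$ of $R_{\ell}$. I would then run a finite downward induction: assuming inductively that $R_i$ is a Noetherian ring with $\Spec R_i \cong V_i$ whose maximal ideals all have residue field $k$, Theorem \ref{iso} applied to the splitting $\varphi_i: V_i \to V_{i-1}$ produces a Noetherian subring $R_{i-1} \subseteq R_i$ with $\Spec R_{i-1} \cong V_{i-1}$. Setting $R := R_0$ at the end yields $\Spec R \cong V$, as required.

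The hard part is to verify that the common residue field $k$ actually propagates to $R_{i-1}$ at every step, for otherwise the hypothesis of Theorem \ref{iso} fails to be available at the next round of the induction. I would inspect the gluing construction of \cite[Theorem A]{DL} used inside the proof of Theorem \ref{iso}: the maximal ideals of the glued ring $R_{i-1}$ consist of the conductor $\bigcap_{N \in \mathcal{M}_i} N$, whose residue field is canonically isomorphic to the common residue field $k$ shared by the $N \in \mathcal{M}_i$ being glued, together with the contractions of the maximal ideals of $R_i$ not in $\mathcal{M}_i$, which inherit residue field $k$ under the contraction-extension correspondence. Hence $R_{i-1}$ retains common residue field $k$, and the induction carries all the way down to $R_0$, completing the reduction.
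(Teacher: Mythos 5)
Your proposal follows essentially the same route as the paper: simplify $V$ to $V_{\ell}$ by Theorem \ref{globalsimplify}, realize $V_{\ell}$ via the hypothesis, and descend to $V$ using Theorem \ref{iso}, with property (c) of \cite[Theorem A]{DL} preserving the common residue field $k$ under each gluing. The paper phrases the descent as a single invocation of Theorem \ref{iso}, whereas you spell out the finite downward induction along the simplifying chain and explicitly track $|V_i| = \beta$, $|\max V_i| < \infty$, and the residue field condition at every step; these are precisely the details the paper leaves implicit, so your argument is a more careful rendering of the same proof.
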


\begin{proof}
Let $V$ be a proper $K$-poset with finitely many maximal nodes. Then $V$ has a simplification $V_{\ell}$ into a proper, simple $K$-poset with finitely many maximal nodes by Theorem \ref{globalsimplify}. Realize $V_{\ell}$ as the spectrum of a suitable commutative Noetherian ring $R$ with common residue field $k.$ Then, by Theorem \ref{iso} we see that $V \cong \Spec S$ for some subring $S \subseteq R.$ By property ``c" of  \cite[Theorem A]{DL}, all the residue fields of $S$ are still $k$ and the result is proved. 
\end{proof} 

\section{Embedding $K$-posets into Noetherian spectra}\label{embedding}
We begin working to show that if $U$ is any proper, simple, connected $K$-poset then there is a Noetherian ring for which $U$ embeds into $\Spec R$ in a strong sense: the map $\varphi$ will induce an isomorphism from $\mathcal H_U$ onto $\mathcal H_{\Spec R},$ and send $\max U$ onto $\max \Spec R.$ Moreover, it will send classes of the form $[\mathcal M/u]$ into classes of the form $[\varphi(\mathcal M)/\varphi(u)].$

\begin{lem}\label{genericufd} Let $R$ be a Noetherian domain containing an infinite field $k$ of cardinality $\beta,$ and let $X$ be a set of indeterminates of cardinality $\beta.$ Let $M_1, \ldots, M_n$ be maximal prime ideals of $R$ of height two. If $1 \le l \le n$ is any integer, and $X' \subset X$ is any subset of indeterminates, and $x_t \notin X'$ is another indeterminate, then there are $\beta$ finitely generated height one prime ideals $Q$ of $R[X'][x_t]$ satisfying $Q \subset \bigcap_{i=1}^l M_iR[X'][x_t], Q\not \subseteq \bigcup_{j = l+1}^n M_jR[X'][x_t],$ and $Q \cap R[X'] = 0.$ \end{lem}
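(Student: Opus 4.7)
The plan is to construct $\beta$ distinct principal prime ideals of the form $(f_c) := (a x_t + b + c a)$, indexed by $c \in k$, with elements $a, b \in R$ chosen in advance.

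First, since the $M_j$ are finitely many, ordinary prime avoidance in $R$ supplies $a \in \bigcap_{i=1}^{l} M_i \setminus \bigcup_{j=l+1}^{n} M_j$; placing $a$ in $\bigcap_{i \le l} M_i$ will drive every coefficient of each $f_c$ into $\bigcap_{i \le l} M_i$, while $a \notin M_j$ for $j > l$ will keep the generator (and hence, by prime avoidance in $R[X'][x_t]$, the ideal $(f_c)$ itself) out of every $M_j R[X'][x_t]$. Second, let $p_1, \ldots, p_s$ be the finitely many minimal (hence height-one) primes over $(a)$ in $R$; because each $M_i$ has height two, no $M_i$ is contained in any $p_k$, whence $\bigcap_{i \le l} M_i \not\subseteq p_k$ for each $k$, and prime avoidance once more yields $b \in \bigcap_{i \le l} M_i \setminus \bigcup_k p_k$. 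This choice forces the common divisors of $a$ and $b$ (and hence of $a$ and $b + c a$) to be units of $R$: any non-unit common divisor would live in some $p_k$ and drag $b$ into $p_k$.

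The main technical step, and the step I expect to require the most care, is verifying that each $(f_c)$ is a height-one prime ideal of $R[X'][x_t]$. To do this I would examine the evaluation map $R[x_t] \to R[1/a]$ sending $x_t \mapsto -(b + c a)/a$; the ideal $(f_c)$ sits inside its kernel, and the reverse inclusion would follow by induction on the degree. The crux is that if $g(x_t) = \sum g_i x_t^i \in R[x_t]$ vanishes at $-(b + c a)/a$, the resulting identity $\sum g_i (-(b + c a))^i a^{n-i} = 0$ in $R$, together with the fact that $b + c a$ avoids every associated prime of $(a)$ (read off from a primary decomposition of $(a)$), forces $a \mid g_n$ in $R$; subtracting $f_c \cdot (g_n/a) x_t^{n-1}$ then lowers the degree and lets the induction continue. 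This embeds $R[x_t]/(f_c)$ into the domain $R[1/a]$, giving primeness in $R[x_t]$; base-changing to $R[X'][x_t]$ preserves primeness since polynomial rings over a domain are domains, and height one follows from Krull's principal ideal theorem applied to any candidate strictly intermediate prime after restricting to a Noetherian subring $R[X_0, x_t]$ with $X_0 \subseteq X'$ finite.

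The remaining conditions follow immediately from the construction. Since both coefficients of $f_c$ lie in $\bigcap_{i \le l} M_i$, we have $(f_c) \subseteq M_i R[X'][x_t]$ for every $i \le l$; since the leading coefficient $a$ is outside each $M_j$, the generator $f_c \notin M_j R[X'][x_t]$ for $j > l$, and prime avoidance then gives $(f_c) \not\subseteq \bigcup_{j>l} M_j R[X'][x_t]$; the positive $x_t$-degree of $f_c$ forces $(f_c) \cap R[X'] = 0$ in the domain $R[X', x_t]$; and $(f_c) = (f_{c'})$ would require the two generators to differ by a unit of $R$, which forces $c = c'$ since $a \ne 0$. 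Hence the $|k| = \beta$ ideals $(f_c)$, $c \in k$, are distinct, principal (so finitely generated), and enjoy all the properties required by the lemma.
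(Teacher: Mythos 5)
There is a genuine gap in the step you flag as ``the step I expect to require the most care.'' Your degree-induction argument for the injectivity of $R[x_t]/(f_c) \to R[1/a]$ works only if $b+ca$ is a non-zero-divisor modulo $(a)$, which is to say $b+ca$ lies outside \emph{every} associated prime of $(a)$, not merely the minimal ones. You chose $b$ to avoid only the minimal primes $p_1,\ldots,p_s$, and then in the crucial sentence you invoke ``the fact that $b+ca$ avoids every associated prime of $(a)$,'' which does not follow from your choice. This is not a cosmetic omission: in a Noetherian domain $R$ that is not Cohen--Macaulay, $(a)$ can have an embedded associated prime of height two, and nothing prevents that embedded prime from being one of $M_1,\ldots,M_l$. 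Since you are forced to take $b \in \bigcap_{i\le l} M_i$, you then simply cannot avoid that associated prime, and $(f_c)$ will fail to be prime: if $(b+ca)r \in (a)$ with $r \notin (a)$, say $(b+ca)r = as$, then $a(rx_t + s) = r f_c \in (f_c)$ while neither $a$ nor $rx_t+s$ lies in $(f_c)$. The ideal $(f_c)$ is only the unsaturated piece of the kernel you want, and the whole construction (distinctness of the $(f_c)$, the count of $\beta$ primes) rests on this primality.

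The paper avoids this entirely by never asserting that its element $z_a = fx_t + ag$ generates a prime. It localizes $R[X',x_t]$ at $\bigcup_i M_iR[X',x_t]$, applies the principal ideal theorem to obtain \emph{some} height-one prime $Q_a$ over $z_a$, and then proves uniqueness of $Q_a$ indirectly: any minimal prime of $z_a$ in the localization is shown to contract to $(0)$ in $R[X']$ (else it would be extended, forcing both coefficients into a height-one prime of $R[X']$, contradicting the choice of $f,g$), and a prime with trivial contraction is pinned down uniquely after passing to $K[x_t]$ with $K$ the fraction field of $R[X']$. This route uses no unmixedness of $(a)$ and so is insensitive to embedded primes. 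Your choices of $a,b$ and the construction $f_c = ax_t + b + ca$ are close in spirit to the paper's $z_a$, and most of your remaining verifications (containment in $\bigcap_{i\le l}M_iR[X'][x_t]$, avoidance of the $M_j$, $j>l$, trivial contraction to $R[X']$) are sound; it is specifically the claim that $(f_c)$ is itself prime, and the argument you give for it, that would need to be replaced by an argument locating and uniquely identifying a minimal prime over $f_c$ in the manner the paper does.
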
 

\begin{proof} The proof of this lemma is similar to the proof of \cite[Lemma 2]{MCADAM}. The first observation we make is that if $a \in R[X'],$ then there are only finitely many height one prime ideals in $R[X']$ containing $a.$ To see this, choose a finite subset $F \subseteq X'$ for which $a \in R[F].$ Then $R[F]$ is Noetherian and of course there are only finitely many height one prime ideals $P_1, \ldots, P_w$ containing $a.$ If $P$ is any height one prime ideal of $R[X']$ containing $a,$ then $P \cap R[F]$ contains $a.$ If $\height P \cap R[F] > 1,$ then there is a finitely generated prime ideal $P'$ whose height is greater than one and satisfies $P'R[X'] \subset P.$ By \cite[Lemma 7]{JSPEC}, we have $\height P'R[X'] > 1,$ which conflicts with $\height P = 1.$ In particular, $\height P \cap R[F] = 1$ and $P$ is easily seen to be extended from $R[F].$ 

Choose nonzero $f$ in $M_1R[X'] \cap \ldots \cap M_nR[X'],$ and choose $g \in M_1R[X'] \cap \ldots \cap M_lR[X'] \setminus \cup_{j=l+1}^n M_jR[X']$ and outside the minimal prime ideals of $f$ in $R[X']$ of which there are only finitely many by the above argument. For each $a \in k^{\times},$ set $z_a = fx_t + ag.$ Note that $z_a$ is not in $M_jR[X'][x_t]$ for any $l < j \le n.$ 

We claim that there is a height one prime ideal $Q_a$ satisfying $Q_a \subset \bigcap_{i=1}^l M_iR[X'][x_t]$ and $Q_a \not \subseteq \bigcup_{j=l+1}^n M_jR[X'][x_t]$ containing $z_a$ for each $a \in k^{\times}.$  Localizing at the union of $M_iR[X', x_t],$ we have formed by \cite[Lemma 7]{JSPEC} a Noetherian ring of Krull dimension 2. In particular, $z_a$ lies in a height one prime ideal $Q_a'$ extended from a height one prime ideal $Q_a$ in $R[X', x_t]$ contained in the union of the prime ideals $\bigcup_{i=1}^n M_iR[X', x_t].$ Clearly $Q_a$ is outside of $\bigcup_{j=l+1}^n M_j$ since $z_a$ is not in that union. We thus need to show that $Q_a$ is the only height one prime ideal containing $z_a.$ We show $Q_a \cap R[X'] = 0.$ Assume otherwise. If $Q_a \cap R[X'] \ne 0,$ then it has height one. To see this, localize at any of the $M_iR[X']$ containing $Q_a \cap R[X']$ for $1 \le i \le l$ and apply the altitude theorem. In particular, $Q_a = Q^*R[X'][x_t]$ for some prime ideal $Q^* \subset R[X'].$ Since $fx_t + ag \in Q_a,$ this means that $f, g \in Q^*$ by definition of ideal extension, which contradicts our choice of $f, g$ above. In particular $Q_a$ contracts to zero and survives the localization to $K[x_t]$ where $K$ is the fraction field of $R[X'].$ Just as in \cite[Lemma 2]{MCADAM}, the only prime ideal so surviving is clearly $(x_t - ag/f)K[x_t] \cap R[X'][x_t].$ 

To close the proof, we just need to check that we have generated $\beta$ prime ideals in $\bigcap_{i=1}^l M_iR[X'][x_t]$ but not $\bigcup_{j=l+1}^n M_jR[X'][x_t],$ all of which contract to zero. If $z_a, z_b \in Q_a$ for some distinct $a, b \in k^{\times},$ then $(a - b)g \in Q_a$ forcing $g \in Q_a.$ Then $g \in Q_a \cap R[X'] = 0,$ which is an absurdity. \end{proof}

\begin{thm}\label{Embed} Let $U$ be a proper, simple, $K$-poset of cardinality $\beta$ such that $|\max U| < \infty.$ Let $k = k'(T)$ where $T$ is a set of $\beta$ indeterminates over a countable field $k'$ of characteristic zero. There exists a commutative Noetherian ring $R$ and poset embedding $\vp: U \rightarrow \Spec R$ for which the following items hold:
\begin{enumerate}
\item[1] $R$ is a $k$-algebra with $|R| = |k| = \beta,$ and all the residue fields of $R$ are $k;$ 
\item[2] $\varphi$ is height-preserving and induces an isomorphism from $\mathcal H_U$ onto $\mathcal H_{\Spec R};$
\item[3] $\varphi[\mathcal N/u] \subseteq [\varphi(\mathcal N)/\varphi(u)]$ for all classes $[\mathcal N/u];$ and 
\item[4] $\varphi(\max U) = \max \Spec R.$
\end{enumerate}
\end{thm}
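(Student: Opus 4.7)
The strategy is to build $R$ in two stages: first construct a base Noetherian $k$-algebra $R_0$ realizing the ``backbone'' of $U$ (namely $\max U$ together with $\mathcal H_U$), and then enlarge each nontrivial intersection class $[\mathcal N/t]\subseteq U$ by a polynomial extension governed by Lemma~\ref{genericufd}. The key reduction is Theorem~\ref{classificationofkposets} applied to each $L_U(m_i)$: for every $m_i\in\max U$ of positive height, the downset is a simple proper $K$-poset with a single maximum and is therefore either a $1$-fan or a $\beta$-tent (height-zero maximals are isolated points). Thus the backbone of $U$ is finite, consisting of the maximals, the minimals, and the tent centers in $H_1\cap\mathcal H_U^*$, and $U$ is glued from finitely many local pieces which may share minimal nodes, tent centers, and even non-thick leaves among several max nodes.

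For the base ring, introduce over $k$ formal indeterminates $X_j$ indexed by $\min U$ and auxiliary indeterminates $Z_i$ for each height-two maximal, and form a quotient $A=k[X_\bullet,Z_\bullet]/I$, where $I$ is generated by monomial relations modeled on $k[X_1,X_2,Z]/(X_1X_2)$ for each local tent, sharing an $X_j$ between tents that share the corresponding minimal. Localize $A$ at the union of the prescribed $k$-rational maximal ideals to obtain a Noetherian $k$-algebra $R_0$ of cardinality $\beta$ with residue fields $k$, whose spectrum realizes the backbone on the nose with inclusion-preserving bijections $\max\Spec R_0\cong\max U$ and $\mathcal H_{\Spec R_0}\cong\mathcal H_U$.

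Next, adjoin to $R_0$ a set $X$ of $\beta$ fresh indeterminates, including a distinguished one $x_{\mathcal N,t}$ for each nontrivial class $[\mathcal N/t]\subseteq U$. For each such class, let $Q\subseteq R_0$ be the minimal prime corresponding to $t$; the Noetherian domain $R_0/Q$ has dimension two and contains $k$. Apply Lemma~\ref{genericufd} to $R_0/Q$ with $x_t=x_{\mathcal N,t}$, $X'=X\setminus\{x_{\mathcal N,t}\}$, $l=|\mathcal N|$, and $M_1,\dots,M_l$ the images of the maximal ideals corresponding to nodes of $\mathcal N$. This produces $\beta$-many finitely generated height-one primes in $(R_0/Q)[X'][x_{\mathcal N,t}]$ lying inside $\bigcap_{m\in\mathcal N} M_m$ and outside the remaining maximals, and contracting to zero in $(R_0/Q)[X']$. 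Lifting back gives $\beta$-many height-one primes of $R_0[X]$ containing $Q$, contained in $\bigcap_{m\in\mathcal N}M_mR_0[X]$ but not in any other extended maximal, and dominating no minimal of $R_0$ other than $Q$. Localizing at the union of the prescribed maximals yields $R$, and $\varphi$ is defined by sending each backbone node to its constructed prime and each class node to one of the $\beta$-many primes produced for that class.

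Conditions~1 and~4 follow from the $k$-rational choice of maximals and the final localization; Condition~3 is immediate from the construction. Condition~2 holds because each prime produced by Lemma~\ref{genericufd} contracts to zero in $R_0/Q$, hence dominates only the minimal corresponding to $Q$ and is not thick, so no new nodes appear in $\mathcal H_{\Spec R}$ beyond $\varphi(\mathcal H_U)$. The principal obstacle is the base-ring step: arranging the monomial relations in $A$ so that each thick height-one node of $\mathcal H_U^*$ appears as an actual height-one prime dominating precisely the intended minimals, with no spurious thick primes, even when different tents share minimals and centers. The requisite height verifications will rely on the altitude theorem and \cite[Lemma 7]{JSPEC} in the spirit of the proof of Lemma~\ref{genericufd} itself, and this bookkeeping is where the bulk of the work resides.
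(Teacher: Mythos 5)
Your two-stage plan (build a base ring realizing the backbone $\max U\cup\mathcal H_U$, then use Lemma~\ref{genericufd} to populate the intersection classes) is the right shape, and your second stage is essentially sound. The gap is in the first stage: the monomial-quotient construction of the base ring does not work as described, and this is not mere bookkeeping. Concretely, take $U$ with three minimals $u_1,u_2,u_3$ and two height-two maximals $m_1>h_{12}>u_1,u_2$ and $m_2>h_{23}>u_2,u_3$ with $L_U(m_i)$ the corresponding tents. Your recipe ``one relation per tent, sharing the $X_j$ of a shared minimal'' gives $A=k[X_1,X_2,X_3,Z_1,Z_2]/(X_1X_2,\,X_2X_3)$. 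But $(X_1X_2,X_2X_3)=(X_2)\cap(X_1,X_3)$, so $A$ has only two minimal primes, not three, and $\mathcal H_{\Spec A}$ is nothing like $\mathcal H_U$. There is no fix by a cleverer monomial ideal in general: $\mathcal H_U$ can be an essentially arbitrary finite height-one poset (finitely many minimals, finitely many height-one nodes each dominating a prescribed subset of $\ge 2$ minimals), and monomial quotients only produce rather special such posets. You also ask the quotient to realize $\max U$ ``on the nose'' at the same time, which adds further constraints (correct heights, correct residue fields) that a single $k[X_\bullet,Z_\bullet]/I$ won't automatically satisfy.

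The paper's route around this is genuinely different and is the crux of the proof: it first realizes a \emph{simplification} $\mathcal H_U'$ of the finite poset $\mathcal H_U$ (a disjoint union of fans) as $\Spec R_0$ for $R_0$ a localization of a finite product of copies of $k[x]$, then applies Theorem~\ref{iso} --- i.e., glues maximal ideals via \cite[Theorem A]{DL} --- to descend to a Noetherian subring $R_1\subseteq R_0$ with $\Spec R_1\cong\mathcal H_U$ and all residue fields still $k$. Only afterwards does it pass to $R_2=R_1[x,y]$, pick primes $P_i$ of the correct heights and residue fields sitting over the designated $\varphi_{\mathcal H_U}(h_j)$ or $\varphi_{\mathcal H_U}(u_i)$, adjoin $z$, localize, and invoke Lemma~\ref{genericufd} to seed the classes. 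Your proposal should replace the monomial step with this gluing argument; without it, the claim that ``the spectrum realizes the backbone on the nose'' is unjustified and, in the example above, false.
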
 
\begin{proof} It suffices to assume $U$ is connected.  Enumerate the minimal nodes of $U$ as $u_1, \ldots, u_n.$ 

Assume $n =1.$ Enumerate the maximal nodes of $U$ as $m_1, \ldots, m_{|\max U|}.$ For each $m_i \in \max U,$ choose a prime ideal $P_i$ of $S = k[x,y]$ such that both $\height P_i = \height m_i$ and the fraction field $(S/P_i)_0$ is isomorphic to $k;$ note that if $\height m_i = 1$ for some $i,$ then $P_i = (x - i)$ works as a choice since $i \in k', S/{P_i} \cong k[y]$ and the fraction field of $k[y]$ is isomorphic to $k = k'(T).$ As the characteristic of $k'$ is zero, we do not have to worry about any two choices $(x - i)$ and $(x - j)$ coinciding unless $i = j.$  Since all the fraction fields of $S/P$ are $k,$ it follows that the residue fields $S_P/PS_P$ of the local rings $S_P$ are $k.$ 

Enumerate the chosen prime ideals as $P_1, \ldots, P_{|\max U|}.$ Adjoin an indeterminate $z$ to $S$ to form $S[z].$ Localize at the union of the prime ideals $\bigcup_{i=1}^n P_iS[z]$ to form $R,$ and let $\varphi: U \rightarrow \Spec R$ send $\min U$ to $(0),$ and $m_i$ to $P_iR.$ In order to prove the theorem in this case, we need to define $\varphi$ on nonempty classes of the form $[\mathcal M/u_1].$ Write $\mathcal M = \{m_{i_1}, \ldots, m_{i_a}\}.$ By Lemma \ref{genericufd}, there are $\beta$ prime ideals residing in $[\mathcal P/(0)]$ where $\mathcal P = \{P_{i_1}R, \ldots, P_{i_a}R\}.$ In particular, we may arbitrarily send $[\mathcal M/u_1]$ into $[\mathcal P/(0)]$ via a suitable injection. Since $\mathcal H_U = \{u_1\},$ the map trivially satisfies all the necessary properties stated in the theorem. 

 Let $M_{\mathcal H_U}$ be the set of all maximal nodes of $U$ such that $m \ge h$ for some $h \in H_1\cap \mathcal H_U,$ and for each $u_i \in \min U,$ let $M_{u_i}$ be the set of all $m \in \max U$ such that $L(m) \cap \min U = \{u_i\};$ in other words, $M_{u_i}$ consists of those maximal nodes of $U$ that dominate only one minimal node in $U,$ namely $u_i.$ We make an important observation about $M_{\mathcal H_U}.$ If $m \in M_{\mathcal H_U}$ and $h$ is an element in $H_1 \cap \mathcal H_U$ such that $m \ge h,$ then $h$ is unique. Otherwise $\Lambda_{L_U(m)}$ would contain at least two elements, contradicting the simplicity of $U.$ 

Write $a = |M_{\mathcal H_U}|, b_i = |M_{u_i}|$ for all $1 \le i \le n.$ Notice that $\max U$ is partitioned into $M_{\mathcal H_U}$ and $\bigcup_{i=1}^n M_{u_i},$ since if $m \in \max U$ and does not belong to any $M_{u_i},$ then it must dominate more than one minimal node. If $\height m = 1,$ then $m \in \mathcal H_U,$ a contradiction. Otherwise, if $\height m = 2,$ then since $U$ is simple we have that $L_U(m)$ is a $\beta$-tent and must therefore dominate some $h \in H_1\cap \mathcal H_{L_U(m)} \subseteq H_1 \cap \mathcal H_U.$ 

Now $\mathcal H_U$ is a finite, proper $K$-poset such that $h \in \max \mathcal H_U$ iff $\height h = 1$ in $\mathcal H_U.$ Enumerate the maximal nodes of $\mathcal H_U$ as $h_1, \ldots, h_c.$ Let $\mathcal H_U'$ be a simplification of $\mathcal H_U.$ Now every maximal node of $\mathcal H_U'$ has height one and since $d_{L_{\mathcal H_U'}(m)}(m) = 1$ for all $m \in \max \mathcal H_U',$ it follows that $\mathcal H_U'$ is a finite disjoint union of finite fans. We get $\mathcal H_U'$ as $\Spec R_0$ by choosing a suitable localization $R_0$ of a direct product of $n$ copies of $k[x]$ satisfying $R_0/q \cong k$ for all $q \in \max \Spec R_0.$ By Theorem \ref{iso}, there exists an isomorphism $\varphi_{\mathcal H_U}$ between $\mathcal H_U$ and the spectrum of some Noetherian subring $R_1 \subseteq R_0$ satisfying $R_1/q \cong k$ for all $q \in \max \Spec R_1.$ As an additional important note, part ``c" of \cite[Theorem A]{DL} implies that $(R_1/p)_0 \cong (k[x])_0 \cong k$ for all minimal prime ideals $p$ of $R_1$ by choice of our rather large field $k.$ 

Let $R_2 = R_1[x, y],$ and enumerate the elements of $M_{\mathcal H_U}$ as $m_1, \ldots, m_a.$ What we will do is choose prime ideals of $R_2$ that shall correspond to those in $M_{\mathcal H_U}$ under a map $\varphi$ we have yet to define. Indeed, each $m_i$ dominates some unique $h_j$ for $1 \le j \le c.$ Set $P_i = (\varphi_{\mathcal H_U}(h_j), x - i)R_2.$ Note that we have $(R_2/P_i)_0 \cong k.$ Now fix $1 \le i \le n,$ and consider the set $M_{u_i}.$ Enumerate the elements of it as $m_{u_i}^{(1)}, \ldots, m_{u_i}^{(b_i)}.$ Let $1 \le j \le b_i.$ If $\height m_{u_i}^{(j)} = 2,$ set $P_{u_i}^{(j)} = (\varphi_{\mathcal H_U}(u_i), x - j, y - j).$ If $\height m_{u_i}^{(j)} = 1,$ then set $P_{u_i}^{(j)} = (\varphi_{\mathcal H_U}(u_i), x - j).$ By the note at the end of the preceding paragraph, note that $R_1/(\varphi_{\mathcal H_U}(u_i)) \cong k$ so that, similar to previous reasoning, $(R_2/P_{u_i}^{(j)})_0 \cong k$ for all $1 \le j \le b_i.$ Adjoin an indeterminate $z$ to $R_2,$ and let $R$ be the localization of $R_2[z]$ at the union $\bigcup_{i=1}^a P_{i}R_2[z] \bigcup_{j_1=1}^{n} \bigcup_{j_2 = 1}^{b_{j_1}} P_{u_{j_1}}^{(j_2)}R_2[z].$ Note that all the residue fields of $R$ are $k$ by construction, and of course $|R| = |k|.$ Apply Lemma \ref{genericufd} to our chosen extended prime ideals and $R_2.$ This establishes property 1. 

Define a map from $U$ to $\Spec R$ as follows. If $u \in \max U,$ then simply define $\varphi(u)$ to be $PR$ where $P$ is the maximal prime ideal we picked to correspond to $u$ in the previous paragraph. $P$ was chosen to survive the localization, and thus $PR \in \max \Spec R.$ If $u \in \mathcal H_U,$ define $\varphi(u) = \varphi_{\mathcal H_U}(u)R.$ Since $\varphi_{\mathcal H_U}(u)$ is contained in a prime ideal $P$ surviving the localization, it follows that $\varphi_{\mathcal H_U}(u)R \in \Spec R.$ If $u \in U$ and resides neither in $\mathcal H_U$ nor $\max U$ then $\height u = 1$ and there exists a nonempty subset $\mathcal N \subseteq \max U$ for which $G(u)^* = \mathcal N.$ Moreover, since $u \notin \mathcal H_U,$ it follows that $L(u)^*$ cannot have more than one element in it and thus is equal to $\{u_i\}$ for some minimal node $u_i$ of $U.$ That is, $u \in [\mathcal N/ u_i].$ In particular, we need only find injective maps $\psi_{[\mathcal N/u_i]}$ from all nonempty sets of the form $[\mathcal N/u_i]$ into $[\varphi(\mathcal N)/\varphi(u_i)]$ in order to complete our definition of $\varphi.$  Fixing $[\mathcal N/u_i],$ observe  $|[\mathcal N/ u_i]| \le \beta$ and since $R$ satisfies the conditions of Lemma \ref{genericufd}, it follows that $[\varphi(\mathcal N)/\varphi(u_i)]$ has exactly $\beta$ prime ideals in it (recall that we have already defined $\varphi(\mathcal N) \subseteq \max \Spec R$ (resp. $\varphi(u_i)$)). Choose any injective set map $\psi_{[\mathcal N/u_i]}: [\mathcal N/u_i] \rightarrow [\varphi(\mathcal N)/\varphi(u_i)],$ and simply declare $\varphi(u) = \psi_{[\mathcal N/u_i]}(u)$ in the case where $u \in [\mathcal N/u_i].$ Checking that $\vp$ has all the required properties is straightforward. \end{proof}

\section{Proof of Main Result}\label{proof of main result}

Given a proper, simple $K$-poset $U,$ Theorem \ref{Embed} provides a commutative Noetherian ring $R$ for which $U$ embeds into $\Spec R$ in a very nice way. Of course, $\Spec R$ need not be isomorphic to $U,$ but the only reason that $U$ and $\Spec R$ are not isomorphic is because some class $[\mathcal M/u]$ is strictly smaller than $[\varphi(\mathcal M)/\varphi(u)];$ that is, if $U, \varphi$ and $\Spec R$ satisfy the conclusions of the lemma with the additional property that $|[\mathcal M/u]| = |[\varphi(\mathcal M)/\varphi(u)]|$ for all classes $[\mathcal M/u],$ then there must exist an isomorphism $\psi$ from $U$ onto $\Spec R.$ In order to realize $U$, then, our work is thus reduced to removing prime ideals from intersections in a certain sense. S. Wiegand found herself in a similar position in \cite{SPE}, and it is from there that we find much of our intuition throughout the rest of this paper.

\begin{lem}\label{intersect} Let $R$ be a reduced Noetherian ring with finitely many maximal prime ideals $M_1, \ldots, M_n.$ Suppose $T_1, \ldots, T_n$ are reduced faithfully flat extensions of $R$ with common total quotient ring $\mathcal Q$ for which $PT_i$ is a prime ideal for all prime ideals $P$ of $R$ and $1 \le i \le n.$ Set $R_i = (T_i)_{M_iT_i}^{\circ} \subset \mathcal Q,$ and define $S: = \bigcap_{i=1}^n R_i.$ If each $R_i$ is Noetherian and $T_i \subseteq S$, then the following statements hold:
\begin{enumerate}
\item[1] If $\mathcal Q_S$ is the total quotient ring of $S,$ then $\mathcal Q = \mathcal Q_S.$
\item[2] If $M$ is a maximal prime ideal of $S,$ then $S_{M}^{\circ} = R_j$ for some $j.$
\item[3] If $J$ is an ideal of $S$ and there is $i$ for which $JR_i \cap S \subseteq JR_j \cap S$ for all $ 1 \le j \le n,$ then $JR_i \cap S = J.$ 
\item[4] $S$ is Noetherian and the map $Q \rightarrow QS$ sends maximal prime ideals in $R$ onto maximal prime ideals in $S$ and minimal prime ideals in $R$ onto minimal prime ideals in $S.$ Moreover, $\height M_i = \height M_iS$ for all $1 \le i \le n.$ 
\end{enumerate}
\end{lem}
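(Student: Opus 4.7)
The overall plan is to exploit the presentation $S = \bigcap_i R_i$ inside the common ambient ring $\mathcal{Q}$, mimicking the classical analysis of an intersection of localizations. Part (1) is immediate: every element of $\mathcal{Q}$ can be written as $a/b$ with $a, b \in T_1$ and $b$ a non-zero-divisor; since $T_1 \subseteq S$ and non-zero-divisors in $\mathcal{Q}$ remain non-zero-divisors in $S$, the fraction already lies in $\mathcal{Q}_S$. For part (2), I would first characterize the units of $S$: by construction $s \in S$ is a unit precisely when $1/s \in R_i$ for every $i$, equivalently when $s \notin M_iR_i \cap S$ for any $i$. Thus the non-units of $S$ form the finite union $\bigcup_i (M_iR_i \cap S)$ of prime ideals, so prime avoidance forces every maximal ideal $M$ of $S$ to lie inside some $M_jR_j \cap S$; by maximality $M = M_jR_j \cap S$. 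The identification $S_M^{\circ} = R_j$ then follows from two containments inside $\mathcal{Q}$: any $t/s \in R_j$ with $s \in T_j \setminus M_jT_j$ has $s \in S \setminus M$, while conversely any $a/b \in S_M^{\circ}$ has $b \notin M_jR_j$, so $b$ is a unit of $R_j$.

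For (3), I would establish the stronger local-to-global identity $J = \bigcap_j (JR_j \cap S)$ for every ideal $J$ of $S$, which immediately implies the stated conclusion. Given $a \in \bigcap_j (JR_j \cap S)$, clearing a common denominator in each $JR_j$ via some $s_j \in T_j \setminus M_jT_j \subseteq S$ yields $s_j a \in JT_j \subseteq J$. Thus the colon ideal $J : aS$ contains an element outside every prime $M_jR_j \cap S$, so by part (2) it cannot be contained in any maximal ideal of $S$, forcing $J : aS = S$ and $a \in J$. For the Noetherian statement in (4), an ascending chain $J_1 \subseteq J_2 \subseteq \ldots$ of ideals of $S$ extends to chains in each Noetherian $R_i$ that must stabilize, and the identity $J_k = \bigcap_i (J_k R_i \cap S)$ then forces the original chain to stabilize as well.

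The remainder of (4) deals with the prime correspondence and heights. For maximal primes, I would apply (3) with $J = M_i S$: since $M_iT_j \not\subseteq M_jT_j$ whenever $i \ne j$ (else contraction to $R$ would force $M_i \subseteq M_j$), the extension $M_iR_j$ equals $R_j$ for $j \ne i$, so the hypothesis of (3) is satisfied with index $i$, and its conclusion is $M_iS = M_iR_i \cap S$, the maximal ideal already produced in (2). Surjectivity onto $\max \Spec S$ follows from (2). For minimal primes, the hypothesis that each $PT_i$ is prime together with faithful flatness shows that $PS$ is a minimal prime of $S$ whenever $P$ is a minimal prime of $R$, and conversely every minimal prime of $S$ arises this way by contracting through $T_i$. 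The height identity $\height M_i = \height M_iS$ then follows from $S_{M_iS}^{\circ} = R_i$, $\height M_iR_i = \height M_iT_i$, and preservation of heights under the faithfully flat extension $R \to T_i$. The main technical hurdle is the local-to-global identity in (3): it requires precisely the $T_i \subseteq S$ hypothesis in order to clear denominators back into $S$, and without it, the entire chain $(1) \Rightarrow (2) \Rightarrow (3) \Rightarrow (4)$ by which the other parts of the lemma flow would break down.
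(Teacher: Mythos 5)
Your overall strategy mirrors the paper's: analyze $S = \bigcap_i R_i$ inside the common total quotient ring $\mathcal{Q}$, classify the maximal ideals of $S$ via prime avoidance, and lever the local structure into the global statements. Part (3), however, you prove by a genuinely different and cleaner route: rather than invoking the criterion that containment can be checked after localizing at every maximal ideal (as the paper does), you clear denominators directly using the hypothesis $T_j \subseteq S$ to land $s_j a \in JT_j \subseteq J$, and then run a colon-ideal argument. This proves the stronger identity $J = \bigcap_j (JR_j \cap S)$ for every ideal $J$, from which the stated Item (3) and the Noetherianity in (4) both fall out. That is a nice simplification.

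Two steps in your proposal are stated as if immediate but in fact require exactly the care the paper supplies. First, in (2) you assert that $s \in S$ is a unit ``equivalently when $s \notin M_iR_i \cap S$ for any $i$.'' The forward implication is clear, but the reverse is not: $R_i$ is not local. Since $R$ is reduced but not a domain, $R_i = (T_i)^\circ_{M_iT_i}$ has additional maximal ideals $pR_i$ coming from minimal primes $p$ of $R$ not lying under $M_i$. A non-unit $s$ of $R_i$ with $s \notin M_iR_i$ therefore lies in some such $pR_i$; one then has to chase $s$ through $p\mathcal{Q}$ into $pR_l \cap S \subseteq M_lR_l \cap S$ for a maximal prime $M_l \supseteq p$, which is precisely the argument the paper gives. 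Second, in (4) your treatment of minimal primes --- ``faithful flatness shows that $PS$ is a minimal prime of $S$'' --- is not an argument: $S$ is not a priori flat over $R$, and $PT_i$ being prime in $T_i$ says nothing directly about $PS$ in $S$. The paper establishes this by verifying, via the chain $pR_i \cap S \subseteq p\mathcal{Q} \cap S \subseteq pR_j \cap S$ (valid because $p\mathcal{Q} \cap R_j = pR_j$), that the hypothesis of Item (3) is met, whence $pS = pR_i \cap S$ is prime and is minimal because $R_i$ is a localization of $S$. You should incorporate both of these arguments; as written your proposal asserts the conclusions without supplying the step that makes them true.
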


\begin{proof}

\textbf{Item 1:} If $t \in \mathcal Q_S,$ then $t = a/b$ for $a, b\in S$ and $b$ a non zero divisor $S.$ Since $S \subseteq R_i \subseteq \mathcal Q,$ we may write $a = a'/a''$ and $b = b'/b''$ for $a', a'', b', b'' \in T_i$ such that neither $b''$ nor  $a''$  are zero divisors of $T_i.$ In particular, $b'$ cannot be a zero divisor of $T_i$ since $b$ is not a zero divisor of $S$ and therefore $a/b = b''(a'/a'')/b' \in \mathcal Q.$ Conversely, if $a/b \in \mathcal Q$ for some $a, b \in T_i,$ with $b$ not a zero divisor of $T_i,$ then of course $a, b \in S.$ If $b$ is annihilated by a nonzero $c \in S \subseteq R_i,$ then $c = c'/c''$ for $c', c'' \in T_i$ with $c''$ not a zero divisor in $T_i$ so that $c'$ annihilates $b$ in $T_i,$ a contradiction. Therefore $a/b \in \mathcal Q_S$ and $\mathcal Q = \mathcal Q_S$ as desired.

\textbf{Item 2:} Let $N$ be a maximal prime ideal of $S,$ and let $x \in N.$ If $x$ is invertible in each $R_i,$ then $x$ must be invertible in $S = \bigcap_{i=1}^n R_i.$ Therefore, there exists $R_i$ for which $xR_i \subsetneq R_i$ for some $i.$ If $x \notin M_iR_i \cap S,$ then $x \in pR_i$ for some minimal prime ideal $p \subseteq M_l$ of $R.$ In particular, $x \in p\mathcal Q$ and since $\mathcal Q$ is the total quotient ring of $T_l$ and $R_l \subseteq \mathcal Q,$ we see that $x \in p\mathcal Q \cap R_l = pR_l \subseteq M_lR_l.$ As $x \in S,$ it immediately follows that $x \in M_lR_l \cap S.$ In particular, $N \subseteq \bigcup_{i=1}^n M_i R_i \cap S$ and seeing as $N$ is a maximal prime ideal of $S,$ we have by the prime-avoidance lemma that in fact $N = M_iR_i \cap S$ for some $i.$ To check that each $M_iR_i \cap S$ is a maximal ideal in $S,$ note that otherwise we would have $M_iR_i \cap S \subsetneq M_lR_l \cap S$ for some $l \ne i$ by what we have just shown and thus $M_iR_i \cap T_l \subset M_lT_l$ since $R_l$ is a localization of $T_l.$ Therefore, if $r \in M_i$ in $R,$ then $r \in M_iR_i \cap T_l \cap R \subset M_lT_l \cap R = M_l$ so that $M_i = M_l,$ an absurdity. Thus, each $M_iR_i \cap S$ for $1 \le i \le n$ is indeed a maximal prime ideal of $S.$

Fixing $1 \le j \le n,$ we claim $S_{M_jR_j \cap S}^{\circ} = R_j.$ If $x \in S_{M_jR_j \cap S}^{\circ}$ then $x = a/b$ is a ratio of elements in $S$ where $b$ is not in any of the minimal prime ideals of $S,$ and not an element of $M_jR_j \cap S.$ Since $S \subseteq R_j$ and $\mathcal Q = \mathcal Q_S,$ we see that $b$ cannot be in any of the minimal prime ideals of $R_j$ and certainly cannot reside in $M_jR_j.$ Therefore $a/b \in R_j$ as desired. Conversely, if $x \in R_j,$ then $x = c/d$ for $c, d \in T_j$ but not in $M_jT_j$ nor any of the minimal prime ideals of $T_j.$  Again invoking the fact that $\mathcal Q = \mathcal Q_S,$ we see that $d$ cannot lie in any of the minimal prime ideals of $S,$ and of course $d \notin M_jR_j \cap S$ so that $c/d \in S_{M_jR_j \cap S}^{\circ}.$ It follows that all the localizations at maximal prime ideals $N$ of $S$ coincide with some $(R_i)_{M_iR_i}.$

\textbf{Item 3:} To show that $(JR_i \cap S)S_N \subseteq JS_N$ for all maximal prime ideals $N$ of $S,$ we need only show that $(JR_i \cap S)(R_j)_{M_jR_j} \subseteq J(R_j)_{M_jR_j}$ all $1 \le j \le n.$ Fix $j.$ We have $(J(R_j)_{M_jR_j} \cap R_j)(R_j)_{M_jR_j} = J(R_j)_{M_jR_j}$ and since we are assuming $JR_i \cap S \subseteq JR_j \cap S,$ it follows that $(JR_i \cap S)(R_j)_{M_jR_j} \subseteq J(R_j)_{M_jR_j}$ as desired.

\textbf{Item 4:} Since $S$ is locally Noetherian and it has only finitely many maximal prime ideals, it follows that $S$ is Noetherian by a result of Nagata; see \cite[Exercise 9.6]{EISENBUD}. If $M_i \in \max \Spec R,$ then it immediately follows that $M_iS = M_iR_i \cap S$ upon applying item 3; indeed $M_iR_i \cap S \subseteq M_jR_j \cap S = S$ for all $j \ne i.$ Now assume $p \in \min \Spec R.$ Let $1 \le j \le n.$ Now $pR_j$ is a minimal prime ideal of $R_j$ and since $\mathcal Q$ is a localization of $R_j$ preserving $pR_j,$ we see that $p\mathcal Q \cap R_j = pR_j.$ Therefore, $pR_i \cap S \subseteq p\mathcal Q \cap S \subseteq (p\mathcal Q \cap R_j) \cap S = pR_j \cap S.$ Therefore $pR_i \cap S = pS$ by item 3 and we see that $pS$ is thus a prime ideal. It is a minimal prime ideal in $S$ since $R_i$ is a localization of $S$ and $pR_i$ is a minimal prime ideal of $R_i.$ Conversely, we see that all minimal prime ideals of $S$ must have the form $qS$ for some minimal prime ideal $q$ of $R$ since all of the minimal prime ideals in $S$ must be a contraction of a minimal prime ideal from $R_i.$ The final part of item 4 follows from basic localization properties, the altitude theorem, and the faithful flatness assumption. \end{proof}

We now make an observation about the cardinality of a Noetherian ring as it relates to the cardinalities of its residue fields. This lemma is a very mild refinement of \cite[Lemma 2.1]{KO}.

\begin{lem}\label{inequality} Let $R$ be any commutative semi-local Noetherian domain of dimension two, and let $M$ be any maximal prime ideal of $R.$ If $\height M = 2,$ then $$\aleph_0 + |R/M| \le |[M/0]| \le |\Spec R| \le |R| \le |R/M|^{\aleph_0}.$$ \end{lem}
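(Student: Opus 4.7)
The plan is to prove the four inequalities separately; the middle two are essentially formal, while the outer two require genuine work.

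For the lower bound $\aleph_0 + |R/M| \le |[M/0]|$, I would adapt a McAdam-style argument working in the two-dimensional Noetherian local domain $R_M$. Using prime avoidance, first choose $y \in M \setminus \bigcup_{M' \ne M} M'$ and then $x$ in the same set but additionally outside all minimal primes of $(y)R_M$, so that $(x, y)R_M$ is $M_M$-primary. For each coset $\alpha \in R/M$, apply the Chinese remainder theorem to pick a representative $a_\alpha \in R$ with $a_\alpha \not\equiv -x y^{-1} \pmod{M'}$ for every other maximal prime $M'$ (using $y \notin M'$ so that $y$ is invertible modulo $M'$); this guarantees $x + a_\alpha y \notin M'$. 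Krull's principal ideal theorem produces a height-one minimal prime $P_\alpha$ of $(x + a_\alpha y)$ in $R_M$. If $\alpha \ne \alpha'$, then $a_\alpha - a_{\alpha'}$ is a unit of $R_M$, so $P_\alpha = P_{\alpha'}$ would force $y \in P_\alpha$, hence $x \in P_\alpha$, hence $M_M \subseteq P_\alpha$, a contradiction; so the $P_\alpha$ are distinct, and their contractions to $R$ lie in $[M/0]$ by construction. This yields $|R/M|$ primes in $[M/0]$; the $\aleph_0$ summand, needed when $R/M$ is finite, follows by running the same argument on the sequence $\{x + y^n : n \in \mathbb N\}$ and discarding at most finitely many indices whose contractions fall into other maximal primes.

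The middle inequalities are immediate: $[M/0] \subseteq \Spec R$ handles the first, and since $R$ is infinite (being two-dimensional) and Noetherian, every prime is finitely generated, so $|\Spec R|$ is bounded by the number of finite subsets of $R$, namely $|R|$. For the upper bound $|R| \le |R/M|^{\aleph_0}$, I would pass through completion. The domain property gives $R \hookrightarrow R_M$, and Krull's intersection theorem in the Noetherian local ring $R_M$ yields $\bigcap_n M_M^n = 0$, so $R_M$ embeds into its $M_M$-adic completion $\widehat{R_M} = \varprojlim_n R_M/M_M^n$. Each $R_M/M_M^n$ has finite length over $R_M$ with composition factors isomorphic to $R/M \cong R_M/M_M$, so $|R_M/M_M^n| \le \aleph_0 + |R/M|$. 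Viewing $\widehat{R_M}$ inside the countable product of these quotients gives $|\widehat{R_M}| \le (\aleph_0 + |R/M|)^{\aleph_0} = |R/M|^{\aleph_0}$, where the last equality uses $|R/M| \ge 2$.

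The main obstacle lies in the lower bound: beyond routine cardinal arithmetic, the delicate point is guaranteeing that the primes produced by the McAdam-style construction actually lie in $[M/0]$ rather than in some larger intersection class dominated by multiple maximal primes. The prime-avoidance and CRT choices outlined above are designed precisely to accomplish this, and the finite-residue-field case requires the auxiliary $\{x + y^n\}$ argument to secure the $\aleph_0$ summand.
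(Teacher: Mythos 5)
Your proof of the lower bound follows the same McAdam-style template as the paper's, but with different bookkeeping: the paper picks $g \in \bigcap_{i=1}^n M_i$ (inside \emph{every} maximal ideal) and $f$ outside the other maximals and outside the minimal primes of $g$, then sets $w_\alpha = f + z_\alpha g$; because $g$ lies in every $M_i$, the exclusion $w_\alpha \notin M_i$ for $i \ge 2$ is automatic and no CRT step is needed. You instead pick $y$ \emph{outside} the other maximals and then must run a CRT argument to adjust the representatives $a_\alpha$; both routes are valid, and your distinctness argument (forcing $y$, then $x$, then $M_M$, into $P_\alpha$) is correct, as is the verification that the contractions land in $[M/0]$. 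Your treatment of the middle two inequalities matches the paper's. For the right-most inequality the paper simply cites a lemma of Kearnes--Oman, whereas your completion argument ($R \hookrightarrow R_M \hookrightarrow \widehat{R_M} \hookrightarrow \prod_n R_M/M_M^n$, with each quotient of cardinality at most $\aleph_0 + |R/M|$, and then $(\aleph_0 + |R/M|)^{\aleph_0} = |R/M|^{\aleph_0}$ since $|R/M| \ge 2$) is a self-contained and clean alternative.

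The one soft spot is the $\aleph_0$ summand. Your proposal to run the argument on $\{x + y^n : n \in \mathbb N\}$ and ``discard at most finitely many indices'' does not go through as stated: since you chose $y \notin M'$, $y$ is a unit modulo each other maximal $M'$, and $x + y^n \in M'$ whenever $y^n \equiv -x \pmod{M'}$; if $y$ has finite multiplicative order in $R/M'$ this can happen for infinitely many (even cofinitely many) $n$. There is also a distinctness issue: from $x + y^n, x + y^m \in P$ with $m > n$ you only get $y^n(1 - y^{m-n}) \in P$, and the branch $1 - y^{m-n} \in P$ gives no contradiction. The paper's choice of $g$ inside all maximals avoids the first problem, and the ``$z_\alpha$ ranging over coset representatives'' device handles distinctness, but when $R/M$ is finite neither construction directly yields $\aleph_0$ primes, and the paper itself waves this away with ``of course there must be infinitely many.'' The cleanest repair is to invoke Theorem \ref{SpecsAreKPosets}: $R$ is two-dimensional, Noetherian, and semi-local, hence has only finitely many height-two primes, so $\Spec R$ is a $K$-poset, and property 3 of the definition applied to the chain $M > P > 0$ gives $|[M/0]| \ge \aleph_0$ directly.
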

\begin{proof} 

Enumerate the maximal prime ideals of $R$ as $M_1 := M, M_2, \ldots, M_n.$ Let $\{z_{\alpha}\}$ be a system of representatives for $R/M.$ Choose any nonzero $g \in \bigcap_{i=1}^n M_i,$ enumerate the minimal prime ideals over $g$ as $Q_1, \ldots, Q_m,$ and choose $f \in M\setminus \left( \bigcup_{i=2}^n M_i \bigcup \bigcup_{j=1}^m Q_j \right).$ Define $w_{\alpha} = f + z_{\alpha}g$ for all $\alpha.$ Note that $w_{\alpha} \notin M_i$ for any $1 < i \le n.$ Consequently, if $w_\alpha \in P,$ then the only maximal prime containing $P$ must be $M;$ that is, $P \in [M/0].$ Moreover, no two $w_{\alpha}$ may reside in a common height one prime ideal by our choices of $f$ and $g$. The altitude theorem guarantees a height one prime ideal $P_{\alpha}$ containing each $w_{\alpha},$ and consequently $|[M/0]| \ge |R/M|.$ Of course, there must be infinitely many elements in $[M/0]$ regardless of the cardinality of $R/M,$ so we get $\aleph_0 + |R/M| \le |[M/0]| \le |\Spec R|.$

For the third inequality, note that the set of all nonempty finite subsets of $R$ has the same cardinality as $R.$ Since $R$ is Noetherian, the map sending a nonempty finite set $F$ to the ideal $(F)$ in $R$ is surjective, and it follows that the set of all ideals in $R$ has cardinality not greater than $R.$ Therefore $|\Spec R| \le |R|.$ 

The last inequality follows from the statement of \cite[Lemma 2.1]{KO}. \end{proof}

\begin{lem}\label{union} Let $\Lambda$ be an infinite well ordered set of cardinality $\beta$ with the property that $|S(a)^*| < \beta$ for all $a \in \Lambda.$  If $X \subseteq \Lambda$ is any nonempty subset of $\Lambda$ of cardinality $\beta,$ then $X$ may be expressed as a disjoint union of $\beta$ sets $X_{a}$ indexed by $a \in \Lambda$ in such a way that $|X_a| = \beta$ and $\min X_a \ge a$ for all $a \in \Lambda.$ 
 \end{lem}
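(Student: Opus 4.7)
The plan is to perform a transfinite recursion of length $\beta$, exploiting one simple consequence of the initial-segment hypothesis: for every $a \in \Lambda$, the tail
\[
X_{\ge a} := \{x \in X : x \ge a\}
\]
has cardinality $\beta$, since its complement inside $X$ is contained in $S(a)^{*}$ and therefore has cardinality strictly less than $\beta$, while $|X| = \beta$. This is the only place the hypothesis really enters.

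Having recorded this, I would index the construction by $\Lambda \times \beta$. Because $|\Lambda \times \beta| = \beta$, I can fix a bijection $h$ from $\Lambda$ (viewed as the initial ordinal of cardinality $\beta$, which is exactly what the hypothesis on $|S(a)^{*}|$ says) onto $\Lambda \times \beta$; write $h(\gamma) = (a_\gamma, \xi_\gamma)$. I would then build an injection $f : \Lambda \times \beta \to X$ with $f(h(\gamma)) \ge a_\gamma$ by transfinite recursion on $\gamma$: at stage $\gamma$ the set $\{f(h(\delta)) : \delta < \gamma\}$ of already chosen values has cardinality $|S(\gamma)^{*}| < \beta$, whereas $|X_{\ge a_\gamma}| = \beta$, so an unused element of $X_{\ge a_\gamma}$ is always available and may be declared to be $f(h(\gamma))$.

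From $f$ I would define $X_a' := \{f(a,\xi) : \xi \in \beta\}$. These sets are pairwise disjoint by injectivity of $f$, each has cardinality $\beta$, and each satisfies $\min X_a' \ge a$ by construction. To make the union cover all of $X$ (not just the image of $f$), I would set $a_0 := \min \Lambda$, put
\[
X_{a_0} := X_{a_0}' \cup \Bigl( X \setminus \bigcup_{a \in \Lambda} X_a' \Bigr),
\qquad
X_a := X_a' \text{ for } a \ne a_0,
\]
and note that every leftover element of $X$ lies in $\Lambda$ and is therefore automatically $\ge a_0$, so the absorption does not violate $\min X_{a_0} \ge a_0$; and $|X_{a_0}| = \beta$ since $|X_{a_0}'| = \beta$ already.

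The argument is essentially bookkeeping; the only genuinely delicate point, and the one I would expect to be the main obstacle, is ensuring that at every stage $\gamma < \beta$ the tail $X_{\ge a_\gamma}$ has not been exhausted. That is precisely where the hypothesis $|S(a)^{*}| < \beta$ is used — it forces the initial segment of already-used indices to remain of cardinality strictly less than the cardinality of the tail, so the recursion never fails.
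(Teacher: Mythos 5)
Your proof is correct, and it exploits the same cardinality count as the paper (at each stage of a transfinite recursion of length $\beta,$ fewer than $\beta$ things have been consumed), but the construction it runs is genuinely different. The paper first fixes an arbitrary partition $\mathcal X$ of $X$ into $\beta$ blocks of size $\beta$ and then, in a recursion over $a \in \Lambda,$ selects a not-yet-used block of $\mathcal X$ whose least element is $\ge a;$ the enabling observation there is that the least elements of the unused blocks form a subset of $X$ of size $\beta,$ which therefore cannot lie entirely inside $S(a)^*.$ You instead build the blocks element-by-element, indexing the construction by a bijection $h\colon \Lambda \to \Lambda \times \beta$ and at each stage picking an unused element of the tail $X_{\ge a_\gamma},$ which is possible because the tail has cardinality $\beta$ by the initial-segment hypothesis while only $|S(\gamma)^*| < \beta$ elements are in use. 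Both arguments finish by absorbing the leftover elements of $X$ into a single block: the paper sends them into the block indexed by $z = \min X''$ (and then checks $\min X_z \cup X'' \ge z$), while you send them into the block indexed by $\min\Lambda,$ where the minimum constraint is vacuous. Your route avoids committing to a $\beta$-block partition up front and avoids the auxiliary reasoning about block minima, at the modest cost of the $\Lambda \times \beta$ bookkeeping; the paper's route is marginally shorter because the blocks come for free from a partition.
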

 
 \begin{proof} For simplicity, assume $\min \Lambda = 0.$ Let $\mathcal X$ be a collection of $\beta$ subsets of $X$ of cardinality $\beta$ such that the intersection of any two is empty and their union is all of $X.$ For each $w \in X,$ let $T_w$ be the unique set in $\mathcal X$ containing $w.$ Let $x$ be the least element of $X,$ and define $X_0 = T_x.$ Let $a \in \Lambda,$ and assume that we have defined $X_{a'}$ for all $0 \le a' < a$ in such a way that $\min X_{a'} \ge a'.$ Since $|S(a)^*| < \beta,$ we must have only defined fewer than $\beta$ sets. In particular, $|\mathcal X\setminus\{X_{a'}: 0 \le a' < a\}| = \beta.$ Let $Y$ be the set of all elements $y' \in X$ such that $y'$ is the least element of some set in $\mathcal X\setminus\{X_{a'}: 0 \le a' < a\}.$ Since the latter set has cardinality $\beta,$ it follows that $|Y| = \beta.$ In particular, if $y' < a$ for all $y' \in Y,$ then $|S(a)^*| = \beta,$ a contradiction to our assumption about $\Lambda.$ Therefore, some element $y \in Y$ must satisfy $y \ge a.$ Relabel $T_y$ as $X_a.$ 
 
 We have inductively created $\beta$ sets $X_a$ of cardinality $\beta$ such that $\min X_a \ge a$ for all $a \in \Lambda.$ If $X = \bigcup_{a \in \Lambda} X_a,$ then we are done. Otherwise, if $X \ne \bigcup_{a \in \Lambda} X_a,$ then the set $X'' = X \setminus \bigcup_{a \in \Lambda} X_a$ has least element $z.$ Declare $X_a' = X_a$ for all $a \ne z,$ and $X_z' = X_z \cup X''.$ Then still $\min X_a' \ge a$ for all $a \in \Lambda$ and of course $X = \bigcup_{a \in \Lambda} X_a'.$  \end{proof}
 
 \begin{thm}\label{class3} Let $\beta$ be an infinite cardinal, and let $k'$ be a countable field of characteristic zero. Set $k = k'(T),$ where $T$ is a set of $\beta$ indeterminates over $k'.$ Let $R$ be a reduced Noetherian $k$-algebra such that $|R| = \beta$ and $\Spec R$ is a $K$-poset with $\ell$ maximal elements and $|[\mathcal M/Q]| = \beta$ for all classes $[\mathcal M/Q] \subset \Spec R$ such that $Q \subset M$ for all $M \in \mathcal M.$ For each class $[\mathcal N/P] \subset \Spec R,$ let $\alpha_{[\mathcal N/P]} \le |[\mathcal N/P]|$ be any cardinal number. Then there exists a Noetherian $k$-algebra $S \supseteq R$ with $\ell$ maximal prime ideals for which the following items hold:
\begin{enumerate}
\item[1] If all residue fields of $R$ are $k,$ then all the residue fields of $S$ are also $k.$ Furthermore, $|S| = |k| = \beta.$
\item[2] We have $|[\mathcal MS/QS]| = \alpha_{[\mathcal M/Q]}$ for all intersection classes $[\mathcal M/Q] \subset \Spec R$ with $|\mathcal M| \ge 2$ and $Q \subset M$ for all $M \in \mathcal M.$   
\item[3] There is an isomorphism $\psi$ from $\mathcal H_{\Spec R} \cup \max \Spec R$ onto $\mathcal H_{\Spec S} \cup \max \Spec S$ given by $\psi(Q) = QS.$ 
\end{enumerate} 
\end{thm}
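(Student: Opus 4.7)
The plan is to realize $S$ as an intersection $S := \bigcap_{i=1}^{\ell} R_i$ of $\ell$ carefully chosen overrings of $R,$ one $R_i$ per maximal prime $M_i$ of $R,$ sitting inside a common total quotient ring, and to invoke Lemma \ref{intersect} to transfer Noetherianity and control the maximal/minimal prime structure of $S.$  The substantive work is in designing each $R_i$ so that the intersection eliminates exactly the height-one primes we want to lose, while everything in $\mathcal H_{\Spec R}\cup\max\Spec R$ is preserved.

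First I would fix the combinatorial data.  For every intersection class $[\mathcal M/Q]$ with $|\mathcal M|\ge 2,$ choose a subset $\mathcal K_{[\mathcal M/Q]}\subseteq [\mathcal M/Q]$ of cardinality $\alpha_{[\mathcal M/Q]}$ — the height-one primes slated to survive intact.  For each maximal $M_i,$ let $\mathcal P_i$ be the union of the height-one primes in $H_1\cap \mathcal H_{\Spec R}$ below $M_i,$ the classes $[\{M_i\}/Q],$ and the chosen survivors $\mathcal K_{[\mathcal M/Q]}$ for each class containing $M_i;$ the remaining height-one primes below $M_i$ form the deletion set $\mathcal B_i.$  Then, for each $i,$ I would construct a Noetherian local $k$-algebra $R_i$ between $R_{M_i}$ and its $M_i$-adic completion whose height-one primes above each minimal prime of $R_i$ correspond exactly to $\{PR_i : P\in \mathcal P_i\}.$  Since $k = k'(T)$ has $\beta$ algebraically independent transcendentals over a countable field of characteristic zero, Lemma \ref{primeavoidance} together with a Heitmann-style generic formal fiber argument (in the spirit of \cite{HEI1} and \cite{SPE}) supplies such an $R_i,$ with $|R_i|=\beta$ and all residue fields equal to $k.$  From $R_i$ one then extracts the Noetherian subring $T_i$ that is reduced and faithfully flat over $R,$ satisfies $PT_i$ prime for every $P\in \Spec R,$ and returns $(T_i)_{M_iT_i}^\circ = R_i$ — exactly the input demanded by Lemma \ref{intersect}.

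With $S := \bigcap_i R_i$ in hand, Lemma \ref{intersect}(4) immediately produces $S$ Noetherian, a bijection $M_i\mapsto M_iS$ of maximal prime ideals with height preserved, and a bijection $q\mapsto qS$ of minimal prime ideals.  For $P\in\mathcal H_{\Spec R}$ of height one, the fact that $P$ was retained in $\mathcal P_j$ for every maximal $M_j$ above $P$ makes the lifts of $P$ to the various $R_j$ contract compatibly to a single height-one prime $PS$ of $S$ dominating exactly the right maximal primes, completing item 3.  Dually, for $P\in \mathcal B_i$ in a class $[\mathcal M/Q]$ with $M_i\in\mathcal M,$ the ring $R_i$ was designed so that the contractions $PR_i\cap S$ for varying $i\in \{j: M_j\in\mathcal M\}$ are pairwise distinct primes of $S,$ each inside only one maximal prime — in other words, $P$ fragments out of the class $[\mathcal MS/QS].$  Counting the surviving primes $\mathcal K_{[\mathcal M/Q]}$ then yields $|[\mathcal MS/QS]|=\alpha_{[\mathcal M/Q]}$ and hence item 2; cardinality and residue-field bookkeeping via $R\subseteq S\subseteq \mathcal Q,$ Lemma \ref{inequality}, and part ``c'' of \cite[Theorem A]{DL} deliver item 1.

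The principal obstacle is the construction of the local rings $R_i$: one must simultaneously prescribe which height-one primes of $R_{M_i}$ survive, maintain Noetherianity of the resulting overring, control its residue field and cardinality, and — most delicately — arrange matters so that primes in $\mathcal B_i$ really do fragment, rather than coincidentally contracting to the same prime of $S$ when pulled back from two different $R_j$'s.  This is the technical heart of the argument; everything else is organizational, assembled out of Lemma \ref{intersect} together with the prime-avoidance tool of Lemma \ref{primeavoidance} and the cardinality estimates of Lemma \ref{inequality}.
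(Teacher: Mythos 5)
Your high-level skeleton does match the paper's: both realize $S$ as $\bigcap_{i=1}^{\ell} R_i$ of local pieces attached to the maximal primes $M_i,$ and both route the conclusions through Lemma \ref{intersect}. But the concrete construction you propose for the $T_i$ and $R_i$ is quite different from the paper's, and the two places where the paper does the real work are exactly the places your sketch waves past.

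The paper never touches the $M_i$-adic completion or formal fibers. It adjoins a set $X$ of $\beta$ indeterminates indexed by a well-ordered set $\Lambda$ with all proper initial segments of size $< \beta,$ runs a transfinite recursion to assemble a set $\mathcal J^*$ of elements of $R[X]$ that hits every ``to-be-deleted'' height-one prime while missing every prime slated to survive, partitions $X$ into $X_1, \dots, X_{\ell},$ and chooses surjections $\sigma_j: X_j \twoheadrightarrow \mathcal J^*$ with a back-referencing property ($\sigma_j(x_t)$ involves only earlier indeterminates). Then $T_j := R[X\setminus X_j, \{\sigma_j(z)/z: z\in X_j\}]$ and $S_j := (T_j)_{M_jT_j}^{\circ}.$ This choice hands you for free everything Lemma \ref{intersect} needs: the $T_j$ manifestly share a common total quotient ring (they all sit inside $R[X,Y]$), are faithfully flat over $R,$ have cardinality $\beta,$ and every $PT_j$ is prime. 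The fragmentation mechanism is then explicit: if $P$ meets $\mathcal J^*,$ some $\sigma_j(z)\in P,$ and since $\sigma_j(z)/z$ is an indeterminate over $R$ inside $T_j$, the prime $P$ cannot survive as a common contraction in $S$ below two distinct maximal primes (Theorem \ref{HopeThisWorks}, last paragraph).

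The gaps in your version are these. First, you need the $\ell$ rings $R_i$ to live inside a \emph{single} total quotient ring $\mathcal Q$ before $S = \bigcap_i R_i$ is even well defined; but an $R_i$ built between $R_{M_i}$ and $\widehat{R_{M_i}}$ does not naturally embed in anything commensurable with $\widehat{R_{M_j}}$ for $j \ne i,$ and you offer no mechanism for reconciling them. Second, you assert that a global Noetherian $T_i,$ reduced, faithfully flat over $R,$ with $PT_i$ prime for all $P,$ can be ``extracted'' from the local $R_i$ — this is precisely the hard part, not a corollary of having $R_i.$ Third, the claim that deleted primes $P\in\mathcal B_i$ fragment into pairwise distinct primes of $S,$ each under a single maximal prime, is the content of Theorem \ref{HopeThisWorks} and takes the whole transfinite bookkeeping plus Lemma \ref{BetterExpression} to prove; ``designed so that'' is not a proof. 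Finally, $|\widehat{R_{M_i}}| = |k|^{\aleph_0} = \beta^{\aleph_0}$ may strictly exceed $\beta,$ so a completion-based construction would also need an explicit cardinality cut-down, an issue the polynomial-ring construction avoids by fiat.
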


This theorem essentially has the effect of removing prime ideals from intersections while preserving mub information. It implies Theorem \ref{MT}. To see this, take any proper, simple $K$-poset $U$ of cardinality $\beta,$ and let $R_0, \vp$ be a Noetherian ring and poset map satisfying the conclusions of Theorem \ref{Embed}. Of course, $\vp$ will almost never be an isomorphism since there may exist classes $[\mathcal M/u]$ such that $|[\mathcal M/u]| < |[\vp(\mathcal M)/\vp(u)]|.$ However, we may now simply apply Theorem \ref{class3} to obtain $S \supseteq R_0$ that is Noetherian and has the right number of prime ideals in all the classes of the form $[\mathcal N/Q]$ with $|\mathcal N| \ge 2.$ The remaining nonempty classes in $\Spec S$ all have cardinality $\beta$ by Lemma \ref{inequality} and since $U$ is a proper $K$-poset, all its nonempty classes of the form $[m/u]$ for a maximal node $m$ of height two and a minimal node $u$ are of cardinality $\beta$ as well. In particular, an isomorphism is evident.

We now begin the proof of Theorem \ref{class3}. The proof is largely inspired by the proof of \cite[Proposition 1]{SPE}. Let $\Lambda$ be any set of cardinality $\beta,$ and place a well order on $\Lambda$ with least elements $0, 1$ so that if $t \in \Lambda,$ then $|S(t)^*| < \beta.$  Note that $\Lambda$ does not have a greatest element. Let $X$ be a set of indeterminates over $R$ indexed by $\Lambda.$ For each $x \in R[X],$ let $\mathcal F_x$ be the set of height one prime ideals minimal over $x.$ Note that $|\mathcal F_x|$ is finite for all $x \in R[X].$ 

Enumerate the classes $[\mathcal M/Q] \subset \Spec R$ with $|\mathcal M| \ge 2$ and $\alpha_{[\mathcal M/Q]} = \beta$ as $[\mathcal M_1/Q_1], \ldots, [\mathcal M_n/Q_n].$ Note that this is an enumeration of large classes, and some of the minimal prime ideals $Q_j$ of $R$ may appear more than once or not at all. Define $\mathcal L_i = \{QR[X]: Q \in [\mathcal M_i/Q_i]\}.$ Assume that we have not enumerated all the classes since otherwise we can simply take $S = R$ and be done. For each class $[\mathcal N/P]$ with $\alpha_{[\mathcal N/P]} < |[\mathcal N/P]|$ and $|\mathcal N| \ge 2,$ arbitrarily select $\alpha_{[\mathcal N/P]}$ prime ideals from $[\mathcal N/P] \subset \Spec R.$ Let $\mathcal S_{[\mathcal N/P]}$ be the extensions to $R[X]$ of the selected set of prime ideals along with the extensions to $R[X]$ of the finite set $\mathcal H_{\Spec R}.$ Collect all such prime ideals into a single set: $$\mathscr S:= \bigcup_{\substack{ [\mathcal N/P] \\ \alpha_{[\mathcal N/P]} < \beta}} \mathcal S_{[\mathcal N/P]}.$$ 

Let $\mathscr D$ be the set of height one prime ideals $P$ of $R[X]$ for which one the following conditions are met:

\begin{enumerate}
\item[1] $P \cap R$ is a minimal prime ideal of $R;$ or
\item[2] $(P \cap R)R[X]$ is different from every prime ideal in $\bigcup_{i=1}^n \mathcal L_i \bigcup \bigcup_{Q \in \mathscr S} Q.$
\end{enumerate}

Since $|R[X]| = \beta$ and $R$ is Noetherian, every height one prime ideal of $R[X]$ is finitely generated. In particular, $|\mathscr D| \le \beta.$ Since we are assuming we have not enumerated all of the classes, some $\alpha_{[\mathcal N/P]} < \beta = |[\mathcal N/P]|.$ Seeing as all the prime ideals in that class extend to $R[X],$ we must have $\beta \le |\mathscr D|$ as well so that in fact $|\mathscr D| = \beta.$  Enumerate the prime ideals in $\mathscr D$ as $\{P_t\}_{t \in \Lambda}.$ Note that any prime ideal $P_t$ in our enumeration has fewer than $\beta$ predecessors by choice of $\Lambda.$ 

We now begin the rather delicate process of choosing elements in $R[X]$ to form a set $\mathcal J^*$ that will be used to split prime ideals in $\mathscr D.$ For each $t \in \Lambda$ and $1 \le i \le n,$ we will define a set $\mathcal K_i^t \subset \mathcal L_i.$

We start with the base step $t = 0.$ Choose arbitrarily one element from each $\mathcal L_i$ to form a one element set $\mathcal K_i^0.$ Let $P_0 \in \mathscr D$ be the first prime ideal in our enumeration. Since $P_0 \notin \bigcup_{i=1}^n \mathcal K_i^0 \bigcup \mathscr S,$ and $\left | \bigcup_{i=1}^n \mathcal K_i^0 \bigcup \mathscr S \right | < \beta,$ Lemma \ref{primeavoidance} implies the existence of an element $$y_{P_0} \in P_0 \setminus \left ( \bigcup_{i=1}^{n} \bigcup_{Q \in \mathcal K_i^0} Q \bigcup \bigcup_{Q' \in \mathscr S} Q' \right ).$$ Define $\mathcal J_0 = \{1, y_{P_0}\};$ we include the element 1 in our set to ensure $\mathcal J_0 \cap R$ is nonempty, a property we will need later. This completes the first step.

Now assume $t \in \Lambda$ is positive and for each $s < t$ we have chosen sets $\mathcal K_i^s, \mathcal J_s$ and elements $y_{P_s} \in R[X]$ such that:
\begin{enumerate}
\item[1] Both $|\mathcal K_i^s| < \beta$ and $\mathcal K_i \subset \mathcal L_i.$ If $s$ is minimal with respect to $Q \in \mathcal K_i^s$ for some prime ideal $Q,$ then $Q \notin \bigcup_{s' < s} \mathcal F_{y_{P_{s'}}};$ 
\item[2] For all $s' \le s,$ we have $\mathcal K_i^{s'} \subseteq \mathcal K_i^s,$ and there exists a prime ideal $Q_i^s \in \mathcal K_i^s \setminus \left( \bigcup_{s' < s} \mathcal K_i^{s'}\right);$
\item[3] Each $y_{P_s}\in P_s \setminus \left (\bigcup_{i=1}^n \bigcup_{Q \in \mathcal K_i^s} Q \bigcup \bigcup_{Q' \in \mathscr S} Q' \right);$ and  
\item[4] Each set $\mathcal J_s$ contains $y_{P_s}$ and $|\mathcal J_s| < \beta;$ the inclusion $\mathcal J_{s'} \subseteq \mathcal J_{s}$ is true for all $s' \le s;$ and whenever $Q \in \mathscr S \bigcup \bigcup_{i=1}^n \mathcal K_i^s,$ then $Q \cap \mathcal J_s$ is empty. 
\end{enumerate} 

We need to show that we can make all those choices again for $t.$ Since $\left |\bigcup_{s < t} \mathcal F_{y_{P_s}}\bigcup \bigcup_{s < t} \mathcal K_i^{s} \right| < \beta,$ the set $$\mathcal L_i \setminus \left(\bigcup_{s < t} \mathcal F_{y_{P_s}}\bigcup \bigcup_{s < t} \mathcal K_i^{s} \right)$$ \noindent has cardinality $\beta.$ Choose arbitrarily any element $Q_i^t$ in the above set, and define $\mathcal K_i^t = \{Q_i^t \} \bigcup_{s < t} \mathcal K_i^s.$ Since $P_t \in \mathscr D,$ and $|S(t^*)| < \beta,$ we may use Lemma \ref{primeavoidance} to choose an element $$y_{P_t} \in P_t \setminus \left (\bigcup_{i=1}^n \bigcup_{Q \in \mathcal K_i^t} Q \bigcup \bigcup_{Q' \in \mathscr S} Q' \right ).$$  Define $\mathcal J_t = \{y_{P_t}\} \bigcup \bigcup_{s < t} \mathcal J_s.$ It is easily shown that our choices of $Q_i^t, \mathcal K_i^t, y_{P_t}$ and $\mathcal J_t$ satisfy conditions 1-4. Set $\mathcal J^* = \bigcup_{t \in \Lambda} \mathcal J_t$ and $\mathcal K_i = \bigcup_{t \in \Lambda} \mathcal K_i^t.$

We are now ready to begin forming our desired ring $S.$ First note that $X$ trivially inherits a partial ordering given by $x_t \le x_s$ iff $t \le s \in \Lambda.$ Define $R_0 = R, R_1 = R[x_0, x_1].$ Let $t > 1,$ and having defined $R_s$ for all $s < t,$ define $R_t = (\bigcup_{s < t} R_s)[x_t].$ Of course, $R[X] = \bigcup_{t \in \Lambda} R_t.$ Express $X$ as a disjoint union of sets $X_1, \ldots, X_{\ell}$ such that $X_i$ has cardinality $\beta$ for all $1 \le i \le \ell.$ Fix $i,$ and use Lemma \ref{union} to write each $X_i$ as a disjoint union indexed as $X_i^a$ for $a \ge 0$ in such a way that $\min X_i^a \ge a$ for all $a \in \Lambda.$ For each $i,$ we will carefully choose a surjective set map from $\sigma_i: X_i \rightarrow \mathcal J^*$ that will have the property that whenever $x_t \in X_i$ for some $t \in \Lambda\setminus\{0\},$ then $\sigma_i(x_t) \in R_{t'}$ for some $t' < t.$ Choose any surjection $\sigma_i^0$ from $X_i^0$ onto $\mathcal J^* \cap R.$ If $c$ is a limit element in $\Lambda,$ then let $\sigma_i^c$ be any surjection from $X_i^c$ onto $\mathcal J^* \cap R.$ Now assume $c$ is a successor element with immediate predecessor $c',$ and let $\sigma_i^{c}: X_i^c \rightarrow \mathcal J^* \cap R_{c'}$ be any surjective map. Having defined $\sigma_i^c: X_i^c \to \mathcal J^* \cap R_{c'}$ for all $c \in \Lambda,$ there is a naturally defined set map $\sigma_i$ from $X_i$ onto $\mathcal J^*.$ If $x_t \in X_i,$ then $x_t \in X_i^u$ for some $u.$ In particular, $t \ge u$ by Lemma \ref{union} and $\sigma_i(x_t)$ is either in $R$ or $R_{u'}$ where $u'$ is an immediate predecessor to $u.$ If $\sigma_i(x_t) \in R:=R_0,$ then of course $\sigma_i(x_t) \in R_{t'}$ for some $t' < t$ since $t > 0.$ Otherwise, $\sigma_i(x_t) \in R_{u'}$ with $u' < u \le t,$ and the desired placement of $x_t$ still holds. 

Let $Y_j := \{\sigma_j(z)/z: z \in X_j\}, Y := \bigcup_{j=1}^{\ell} Y_j.$ Set $T_j := R[X\setminus X_j, Y_j]$ and $S_j = (T_j)_{M_jT_j}^{\circ}.$ Define $S := \bigcap_{j=1}^{\ell} S_j.$ Note that each $S_j$ is Noetherian by \cite[Lemma 7]{JSPEC}. We now show that $S$ is our desired ring.

\begin{lem}\label{property1} $S \supseteq R[X,Y]$ is a Noetherian extension of $R[X,Y]$ satisfying the conclusions of Lemma \ref{intersect}. Moreover, $|S| = \beta$ and all the residue fields are $k.$ Consequently, $|\Spec S| \le \beta.$ \end{lem}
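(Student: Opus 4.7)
My plan is to verify that $S = \bigcap_{j=1}^{\ell} S_j$ satisfies the hypotheses of Lemma \ref{intersect} with $R$ as the base ring and $T_j = R[X\setminus X_j, Y_j],$ then read off the stated conclusions. I would begin by showing each $T_j$ is isomorphic to a polynomial ring over $R,$ by proving the set $(X\setminus X_j)\cup Y_j$ is algebraically independent over $R.$ The key observation is that, by construction, $\sigma_j(x_t) \in R_{t'}$ for some $t' < t,$ so $\sigma_j(x_t)$ does not involve $x_t$ itself; a transfinite induction then converts any hypothetical nontrivial relation among elements of $Y_j$ into a nontrivial polynomial relation on the algebraically free variables of $R[X],$ a contradiction. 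With $T_j$ so identified, it is reduced and faithfully flat over $R,$ $PT_j$ is prime for each $P \in \Spec R,$ and all $T_j$ share with $R[X]$ a common total quotient ring $\mathcal Q.$ That each $S_j = (T_j)_{M_j T_j}^{\circ}$ is Noetherian follows from \cite[Lemma 7]{JSPEC}.

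The crucial verification is that $T_j \subseteq S,$ i.e., $T_j \subseteq S_k$ for every $k.$ When $k = j$ this is immediate. For $k \neq j,$ I would proceed by transfinite induction on $t \in \Lambda$ to show both (a) $x_t \in S_k$ when $x_t \in X_k,$ and (b) $y_t = \sigma_j(x_t)/x_t \in S_k$ when $x_t \in X_j.$ For (a), the identity $x_t = \sigma_k(x_t)/y_{t,k}$ expresses $x_t$ as a ratio in which $\sigma_k(x_t) \in R_{t'}$ for some $t' < t$ lies in $S_k$ by induction, while $y_{t,k} \in Y_k \subseteq T_k$ is, being one of the polynomial generators of $T_k,$ outside $M_k T_k$ and a nonzerodivisor, hence a unit of $S_k.$ For (b), the denominator $x_t \in X_j \subseteq X \setminus X_k$ is also an indeterminate of $T_k$ with the same unit property in $S_k,$ and $\sigma_j(x_t) \in S_k$ again by induction. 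This completes the verification and places us in position to apply Lemma \ref{intersect}.

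Lemma \ref{intersect} immediately gives that $S$ is Noetherian together with the asserted prime-structural consequences, including $R[X, Y] \subseteq S.$ For the cardinality, $S \subseteq \mathcal Q$ and $|R[X]| = \beta$ yield $|S| \le |\mathcal Q| \le \beta,$ while $R \subseteq S$ forces $|S| = \beta.$ Then $|\Spec S| \le \beta$ is automatic, since in the Noetherian ring $S$ every prime is finitely generated and there are only $\beta$ finite subsets of $S.$ For the residue field claim, each maximal prime of $S$ is $M_j S$ by Lemma \ref{intersect}, and one verifies directly that the induced inclusion $k = R/M_j \hookrightarrow S/M_j S$ is surjective: the $Y_j$-generators that distinguish $S_j$ from $R$ each have the form $\sigma_j(x_t)/x_t$ and reduce modulo $M_j S$ to an element of $R/M_j,$ so no new residues are introduced.

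The main obstacle, I expect, is the transfinite induction establishing $T_j \subseteq S_k,$ which requires coordinating the property $\sigma_k(x_t) \in R_{t'}$ with $t' < t$ against simultaneous claims across every $j \ne k;$ this is precisely why the $\sigma_j$ were built with their strict dependence on earlier indices. A secondary delicate point is the residue field identification, which in practice is verified by a direct computation in the Heitmann--Wiegand style.
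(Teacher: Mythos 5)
Your overall plan — verify the hypotheses of Lemma \ref{intersect} for the rings $T_j = R[X\setminus X_j, Y_j]$ and $S_j = (T_j)^{\circ}_{M_jT_j},$ use the triangular structure $\sigma_j(x_t) \in R_{t'}$ with $t' < t$ to run a transfinite induction showing $T_j \subseteq S_k$ for all $j, k,$ then read off Noetherianity and the prime structure from Lemma \ref{intersect} — matches the paper's proof. Your parenthetical verification that $(X\setminus X_j)\cup Y_j$ is algebraically independent over $R$ is fine, though the paper treats it as implicit; the containment $R[X, Y] \subseteq S$ follows from $T_j \subseteq S$ for all $j$ (together with $S$ being a ring), not from the conclusions of Lemma \ref{intersect}, a harmless misattribution.

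There is, however, a genuine gap in your residue field argument. You assert that the induced inclusion $k = R/M_j \hookrightarrow S/M_jS$ is \emph{surjective} because ``the $Y_j$-generators $\sigma_j(x_t)/x_t$ reduce modulo $M_jS$ to an element of $R/M_j$.'' That claim is false. In $T_j = R[X\setminus X_j, Y_j],$ each $\sigma_j(x_t)/x_t \in Y_j$ is a free indeterminate over $R;$ modulo $M_jT_j,$ the quotient is the polynomial ring $(R/M_j)[X\setminus X_j, Y_j],$ in which the image of $\sigma_j(x_t)/x_t$ is still an indeterminate, not an element of $R/M_j.$ (The same is true for each $x_t \in X\setminus X_j.$) The localization $S_{M_j}$ therefore has residue field $S/M_jS \cong \kappa(M_jT_j) = k(X\setminus X_j, Y_j),$ a purely transcendental extension of $k$ of transcendence degree $\beta,$ and the inclusion $k \hookrightarrow S/M_jS$ is very far from surjective. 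The correct argument is an \emph{abstract} isomorphism, not an identification along the inclusion: since $k = k'(T)$ with $k'$ countable and $|T| = \beta,$ the field $k(X\setminus X_j, Y_j) = k'(T \cup (X\setminus X_j) \cup Y_j)$ is again a rational function field in $\beta$ indeterminates over the countable field $k',$ hence isomorphic to $k.$ This is exactly why the paper chooses $k$ to have this specific form; your proof does not invoke it and consequently breaks down at this point.
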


\begin{proof} We argue by induction that $R[X] \subseteq S,$ and then we use that result to show that $R[X,Y] \subseteq S.$ Clearly $R_0 \subset S.$ Let $t > 0,$ and assume that we have shown $R_s \subset S$ for all $0\le s < t.$ Let $1 \le j \le \ell$ arbitrary. We need to show that $R_t \subseteq S_j.$ If $x_t \in X \setminus X_j,$ then $x_t \in T_j \subseteq S_j.$ Otherwise, $\sigma_j(x_t)/x_t \in S_j.$ Clearly, $\sigma_j(x_t)/x_t \notin M_jT_j$ and is not a zero divisor in $S_j$ because $\mathcal J^*$ contains no zero divisors. In particular, $\sigma_j(x_t)/x_t$ is a unit in $S_j.$ It follows that $x_t/\sigma_j(x_t) \in S_j$ and since $\sigma_j(x_t) \in S_j$ by induction, we see that $x_t \in S_j$ as well. It follows that $R[X] \subseteq S.$ 

We argue similarly to show that $Y \subseteq S.$ As before, fix $1 \le j \le \ell,$ and take $\sigma_l(z)/z$ for some $1 \le l \le \ell.$ If $j = l,$ then $\sigma_j(z)/z \in T_j \subseteq S_j.$ Otherwise, if $j \ne l$ then $z \in X_j,$ and since $X_j$ and $X_l$ are disjoint, we have $z \in X \setminus X_j.$ In particular, $z \in T_j$ and clearly $z \notin M_jT_j$ and is not a zero divisor in $S_j.$ It follows that $z$ is a unit in $S_j.$ Since $\sigma_j(z) \in R[X] \subset S_j,$ we see that $\sigma_j(z)/z \in S_j$ as desired. 

Each $T_j$ is of course a faithfully flat extension of $R,$ and seeing as $T_j \subset S$ for all $j,$ they all clearly share a common total quotient ring $\mathcal Q$ while also satisfying the conditions of Lemma \ref{intersect}. $S$ thus satisfies the conclusions of the lemma as desired, and $S$ is thus seen to be a Noetherian extension of $R[X,Y].$ As a further consequence of Lemma \ref{intersect}, computing the residue fields of $S_j$ comes down to understanding the residue fields of the maximal prime ideals each $S_j.$ In particular, if $R/M_j \cong k$ for all $M_j \in \max \Spec R,$ it follows that $S_j/M_jS_j$ is the rational function field over $k$ in $\beta$ indeterminates. As $S/M_jS \cong S_j/M_jS_j,$ it follows that any two residue fields of $S$ must coincide with $k.$

Finally, $|S| = \beta,$ and since $S$ is Noetherian it follows that $|\Spec S| \le |S| \le \beta.$ \end{proof}

\begin{lem}\label{ReduceToRX} Let $Q$ be any height one prime ideal of $R.$ If $QS_i \cap R[X] = QR[X]$ for all $i$ such that $Q \subset M_i,$ then $QS$ is a prime ideal of $S.$ \end{lem}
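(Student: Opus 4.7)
The plan is to prove $QS$ is prime by establishing the identity $QS = QS_i \cap S$ for any index $i$ with $Q \subseteq M_i;$ such an $i$ exists because $Q$ has height one in $R.$ The faithful-flatness hypothesis of Lemma~\ref{intersect} makes $QT_i$ prime in $T_i,$ and consequently $QS_i = QT_i \cdot (T_i)_{M_iT_i}$ is prime in $S_i,$ so the contraction $QS_i \cap S$ is automatically a prime ideal of $S.$ To get the reverse inclusion $QS_i \cap S \subseteq QS,$ I would invoke Lemma~\ref{intersect}(3) with $J = QS,$ which reduces the task to verifying $QS_i \cap S \subseteq QS_j \cap S$ for every $1 \le j \le \ell.$

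When $Q \not\subseteq M_j,$ I would use prime avoidance inside $R$ to select $q \in Q$ lying outside both $M_j$ and every minimal prime of $R;$ the faithful flatness of $T_j$ over $R$ then lifts $q$ to a non-zero-divisor of $T_j$ avoiding $M_jT_j,$ making $q$ a unit in $S_j$ and forcing $QS_j = S_j,$ so the required inclusion $QS_i \cap S \subseteq S = QS_j \cap S$ holds trivially.

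The substantive case is $Q \subseteq M_j.$ Given $x \in QS_i \cap S,$ I would use the localization description of $S_i$ to write $x = y/s$ with $y \in QT_i$ and $s \in T_i \setminus M_iT_i,$ then multiply numerator and denominator by a sufficiently large product $p$ of indeterminates from $X_i$ to clear the $Y_i$-denominators, yielding $x = (py)/(ps)$ with $py, ps \in R[X].$ The standing hypothesis $QS_i \cap R[X] = QR[X],$ together with the inclusion $QT_i \cap R[X] \subseteq QS_i \cap R[X],$ gives $py \in QR[X].$ Since $R[X] \subseteq S_j,$ we have $py \in QR[X] \subseteq QS_j,$ and the identity $x \cdot ps = py$ in $S_j$ places the product $x \cdot ps$ inside the prime ideal $QS_j$ (prime by the same faithful-flatness reasoning that was applied to $QS_i$).

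The principal obstacle is then to exclude the possibility $ps \in QS_j.$ Were $ps$ to lie in $QS_j,$ then the hypothesis applied at the index $j$ would force $ps \in QS_j \cap R[X] = QR[X] \subseteq QS_i;$ because $p$ is a product of transcendentals over $R$ and $s \notin M_iT_i,$ a careful bookkeeping argument shows $ps$ is a unit in $S_i,$ so $QS_i = S_i,$ contradicting the primality (and hence proper containment) of $QS_i$ in $S_i.$ Thus $ps \notin QS_j,$ and primality of $QS_j$ yields $x \in QS_j,$ as desired. The delicate point, and where I expect the main effort of the proof to be concentrated, is confirming that each indeterminate of $X_i$ becomes a unit in $S_i$ under the identification $z = \sigma_i(z)/y_z$ with $y_z \in Y_i,$ which requires invoking the avoidance properties built into the construction of the maps $\sigma_i$ via the set $\mathcal J^*.$
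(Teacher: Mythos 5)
Your overall strategy is correct and is essentially the same as the paper's: invoke Lemma \ref{intersect}(3) with $J = QS$ to reduce to showing $QS_i \cap S \subseteq QS_j \cap S$ for all $j,$ dispose of the indices with $Q \not\subseteq M_j$ trivially, and in the substantive case clear denominators into $R[X]$ and use the hypothesis $QS_i \cap R[X] = QR[X].$ Your observation that $QS_i$ is prime (via the faithful-flatness hypothesis on $QT_i$) is also right and matches the paper.

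However, the step you yourself flag as the ``delicate point'' — that each indeterminate $z \in X_i$ becomes a unit in $S_i,$ so that $ps$ is a unit in $S_i$ — is false, and this is a genuine gap. In $S_i$ the element $\sigma_i(z)/z$ is indeed a unit (it is an indeterminate of $T_i$ outside $M_iT_i$), so $z$ is a unit in $S_i$ if and only if $\sigma_i(z)$ is a unit in $S_i,$ i.e. if and only if $\sigma_i(z) \notin M_iS_i.$ But $\sigma_i(z) \in \mathcal J^*$ is some $y_{P_t}$ with $y_{P_t} \in P_t \in \mathscr D,$ and nothing prevents $P_t \subseteq M_iR[X]$ (this happens, for instance, whenever $P_t$ is the extension of a height one prime of $R$ contained in $M_i$ that is not in any $\mathcal L_k$ or $\mathscr S$). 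For such $z,$ $\sigma_i(z) \in M_iS_i$ and $z$ is \emph{not} a unit in $S_i.$ The conclusion ``$QS_i = S_i$'' therefore does not follow, and the contradiction does not materialize as stated.

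The correct mechanism — which is what the paper actually uses — is different: it is not that anything becomes a unit, but that an indeterminate cannot lie in $QR[X].$ Your own setup can be repaired this way. Suppose $ps \in QS_j;$ then $ps \in QS_j \cap R[X] = QR[X] \subseteq QS_i.$ Now $QS_i$ is prime, $s$ \emph{is} a unit in $S_i$ (since $s \in T_i \setminus M_iT_i$ and is a non-zero-divisor), and $p$ is a product of indeterminates from $X_i,$ so some $z \in X_i$ lies in $QS_i.$ But then $z \in QS_i \cap R[X] = QR[X]$ by the hypothesis applied at index $i,$ which is absurd because $z$ is an indeterminate and $Q$ is a proper ideal of $R.$ The paper carries out the analogous argument with the denominator $b$ chosen in $T_j \setminus M_jT_j$ (so that $b$ is a unit in $S_j$) and the clearing product $\xi$ taken from $X_i \cup X_j,$ landing the contradiction in $QS_j \cap R[X] = QR[X]$ rather than in $QS_i,$ but the essential point — an indeterminate cannot lie in $QR[X]$ — is identical, and that is the ingredient your write-up replaces with a unit claim that does not hold.
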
 
\begin{proof} 

By part 3 of Lemma \ref{intersect}, it suffices to show $QS_i \cap S \subseteq QS_j \cap S$ for all $i, j$ such that $Q \subset M_i \cap M_j.$ If $x \in QS_i \cap S,$ then $x \in S \subset S_j$ and there exists a non zero divisor $b \in T_j \setminus M_jT_j$ such that $bx \in QS_i \cap T_j.$ Choose a product $\xi$ of indeterminates from $X_j \cup X_i$ such that $\xi b x \in QS_i \cap R[X] = QR[X] = QS_j \cap R[X] \subset QS_j.$ Now $QS_j$ is a prime ideal of $S_j$ since $Q \subset M_j$ By Lemma \ref{property1}, $\xi, b, x \in S_j$ so that $\xi$ or $b \in QS_j$ if $x$ is not. If $\xi \in QS_j,$ then $x_t \in QS_j \cap R[X] = QR[X]$ for some $x_t \in X,$ an absurdity. Of course, $b \notin QS_j$ since $b$ is a unit in $S_j,$ and therefore  $x \in QS_j \cap S$ as desired. \end{proof}

For each $1 \le i \le \ell,$ let $S_{M_i}$ be the localization of $S_i$ at the maximal prime ideal $M_iS_i,$ and let $T_{M_i}$ be the localization of $T_i$ at $M_iT_i.$ Then $S_{M_i} = T_{M_i}.$ If $Q \subset M_i$ as in the previous lemma, then to prove that $QS_i \cap R[X] = QR[X],$ it suffices to show that the contraction of $QS_{M_i}$ to $R[X]$ under the canonical map is $QR[X].$

\begin{notation} For each $\omega \in \Lambda,$ define $X^{\omega} = \{x_t: t \le \omega\},$ and $Y_j^{\omega} = \{\sigma_j(z)/z: z \in X^{\omega} \cap X_j\}.$ \end{notation}

\begin{lem}\label{BetterExpression} Let $\omega \in \Lambda,$ and $Q$ a prime ideal of $R$ contained in $M_j.$ Suppose $h \in QS_{M_j} \cap R_{\omega}$ is nonzero in $T_{M_j}.$ Then $h = p/q$ for $p \in QR[ (X \setminus X_j) \cap X^{\omega}, Y_j^{\omega}],$ and an element $q \in R[ (X \setminus X_j) \cap X^{\omega}, Y_j^{\omega}]$ localizing to a unit in $S_{M_j}.$

\end{lem}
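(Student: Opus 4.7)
The plan is to lift $h = p/q$ to a polynomial identity in $R[X]$, eliminate the variables $x_s$ with $s > \omega$ one at a time via coefficient comparison, and then replace the remaining good $X_j$-indeterminates using the identity $x_s = \sigma_j(x_s)/y_{x_s}$ so as to land inside $T_j^\omega := R[(X\setminus X_j)\cap X^\omega, Y_j^\omega]$.

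First, because $T_j \subseteq R[X][X_j^{-1}]$, one may write $p = P/D$ and $q = Q'/D'$ with $P, Q' \in R[X]$, $P \in QR[X]$, $Q' \notin M_jR[X]$ (the hypothesis $q \notin M_jT_j$ translates to $\bar Q' \ne 0$ in $T_j/M_jT_j \hookrightarrow k[X][X_j^{-1}]$), and $D, D'$ products of elements of $X_j$. Using $R[X] \hookrightarrow S \subseteq \mathcal Q$ from Lemmas \ref{property1} and \ref{intersect}, cross-multiplying the equation $h = p/q$ in $\mathcal Q$ yields the key identity
\[
hQ'D = PD' \quad \text{in } R[X],
\]
in which only finitely many variables $x_s$ appear.

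Second, I would eliminate bad variables one at a time. If $x_s \in X \setminus X_j$ is bad (i.e., $s > \omega$), it does not appear in $D$ or $D'$; expanding $P = \sum_k P_k x_s^k$ and $Q' = \sum_k Q'_k x_s^k$ and comparing coefficients of $x_s$ in $R[X]$ gives $hQ'_k D = P_k D'$ for every $k$. Because $M_jR[X]$ is prime and $Q' \notin M_jR[X]$, some $Q'_k \notin M_jR[X]$; replace $P, Q'$ by $P_k, Q'_k$. If $x_s \in X_j$ is bad, first factor out its maximal power from $D$ and $D'$ and then run the analogous coefficient-comparison argument. After iterating over the finitely many bad variables, $P, Q', D, D'$ all lie in $R[X^\omega]$, with $P \in QR[X^\omega]$, $Q' \notin M_jR[X^\omega]$ (since $M_jR[X^\omega] \subseteq M_jR[X]$), and $D, D'$ products of indeterminates from $X_j \cap X^\omega$, still satisfying $hQ'D = PD'$.

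Third, I would convert into $T_j^\omega$ by substituting $x_s = \sigma_j(x_s)/y_{x_s}$ for each $x_s \in X_j \cap X^\omega$ that remains. Since $\sigma_j(x_s) \in R_{s'}$ with $s' < s$, any $X_j$-indeterminate inside $\sigma_j(x_s)$ has strictly lower index, so transfinite induction on $s$ expresses every such $x_s$ inside $T_j^\omega$ after inverting finitely many elements of $Y_j^\omega$. Multiplying both sides of $hQ'D = PD'$ by a common product of those $y$'s clears all denominators simultaneously and produces $h = \tilde p/\tilde q$ with $\tilde p \in QT_j^\omega$ and $\tilde q \in T_j^\omega$. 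The main obstacle is showing that $\tilde q$ still localizes to a unit in $S_{M_j}$: modulo $M_jT_j^\omega$, the denominator factors as (the reduction of) $Q'$ multiplied by a product of images of $\sigma_j(x_t)$'s and $y_{x_t}$'s, and the nonvanishing of this product forces $\sigma_j(x_t) \notin M_jR[X]$ for every $x_t$ that actually contributes. Establishing this will require extracting additional information from the construction of $\mathcal J^*$, whose elements $y_{P_t}$ were picked from height-one primes $P_t \in \mathscr D$ using prime avoidance against a prescribed list of primes in $R[X]$.
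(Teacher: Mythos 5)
Your proposal tries to do the descent entirely in $R[X]$ (clear all $X_j$-denominators at once to get $hQ'D = PD'$, compare coefficients there, then push back into $T_j^\omega$), whereas the paper's proof never leaves the $T_j$-coordinates: it writes $h = f/g$ with $f \in QT_j$, $g \in T_j \setminus M_jT_j$, picks out the largest-index indeterminate $z = x_\eta$, expands $f, g$ as polynomials in the \emph{single} variable $\sigma_j(z)/z$ over the subring generated by lower-index variables, clears only the $z$-denominator by $z^c$ and the low-index $Y_j$-denominators by $\xi$, and compares coefficients of $z$ in $R[X]$. The point that makes the descent close is that matching powers of $z$ on the two sides force matching powers of $\sigma_j(z)$, so the $\sigma_j(z)$-factors cancel on the nose and one lands directly on $h = f_{w'}/g_w$ with $g_w \notin M_jT_j$ and a strictly smaller index ceiling.

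Your route loses exactly that cancellation, and the obstacle you flag at the end (needing $\sigma_j(x_t) \notin M_jR[X]$) is genuine and cannot be supplied by more information about $\mathcal J^*$. In fact it already undercuts your first step: the embedding $T_j/M_jT_j \hookrightarrow k[X][X_j^{-1}]$ you invoke to get $Q' \notin M_jR[X]$ is false whenever some $\sigma_j(z) \in M_jR[X]$, since then the $Y_j$-indeterminate $\sigma_j(z)/z$ lies outside the extended ideal $M_jT_j$ of the polynomial ring $T_j$ but maps to $0$ in $k[X][X_j^{-1}]$. The elements $y_{P_t}$ constituting $\mathcal J^*$ are chosen by prime avoidance against $\mathscr S$ and the $\mathcal K_i^t$ but never against $M_jR[X]$, and indeed $y_{P_t} \in M_jR[X]$ can occur (for instance when $P_t \subset M_jR[X]$, which is possible for a height-one $P_t$ contracting to a minimal prime of $R$ inside a height-two $M_j$). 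So $g \notin M_jT_j$ does not give $gD' \notin M_jR[X]$, and your Step 2 can fail before Step 3 begins. The correct repair is not extra input about $\mathcal J^*$; it is to mimic the paper and keep $\sigma_j(z)/z$ intact as a single variable so that the $\sigma_j(z)$-powers appear symmetrically and cancel, making any statement about $\sigma_j(z)$ modulo $M_j$ unnecessary.
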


\begin{proof}
Assume $h \ne 0$ in $T_{M_j}$ since otherwise the result is trivial. Seeing as $S_{M_j} = T_{M_j},$ we may express $h \in QT_{M_j} \cap R_{\omega}$ as $f/g$ for nonzero $f \in QT_j, g\in T_j,$ and $g \notin M_jT_j.$  Since $QT_j$ is extended, all the $R$-coefficients forming $f$ are in $Q.$ 

It suffices to prove that if $z = x_{\eta}$ is the largest indeterminate of $X$ such that $z$ or $\sigma_j(z)/z$ appears in either polynomial expression of $f$ or $g$ and $\eta > \omega,$ then there exists $\eta' < \eta,$ and a new pair of elements $g', f' \in R[ (X \setminus X_j) \cap X^{\eta'}, Y_j^{\eta'}]$ such that $f' \in QR[ (X \setminus X_j) \cap X^{\eta'}, Y_j^{\eta'}], g' \in QR[ (X \setminus X_j) \cap X^{\eta'}, Y_j^{\eta'}], g'$ localizes to a unit and $h = f'/g'.$

Assume $\sigma_j(z)/z$ appears in either $f$ or $g.$ Then there exists $\delta < \eta$ such that both $$f = f_0(\sigma_j(z)/z)^a + \ldots + f_a,$$ \noindent and $$g = g_0(\sigma_j(z)/z)^b + \ldots + g_b$$ for $f_i, g_{i'} \in R[ (X \setminus X_j) \cap X^{\delta}, Y_j^{\delta}].$ Moreover, $f_i \in QR[ (X \setminus X_j) \cap X^{\delta}, Y_j^{\delta}]$ for all $0 \le i \le a.$ Let $c = \max\{a, b\},$ and choose a product $\xi$ of indeterminates from $X^{\delta}$ such that $\xi f_i, \xi g_{i'} \in R_{\delta};$ recall that if $\sigma_j(z)/z \in Y_j^{\delta},$ then $\sigma_j(z) \in R_{\delta}.$ Since $g$ is a unit in $T_{M_j},$ there exists $0 \le w \le b$ such that $g_w \notin M_jT_j.$ Compiling all this information, we may choose $r \in R \setminus M_j$ such that $r\xi z^c g h = r \xi z^c f.$ Rewriting the equation, we see that  $$r \xi h g_bz^c + \ldots + r \xi h g_0 \sigma_j(z)^bz^{c - b}$$ \noindent agrees with $$r \xi  f_az^c + \ldots + r \xi f_0 \sigma_j(z)^a z^{c - a}.$$ Since $h \ne 0$ as an element of $T_{M_j},$ and $g_w$ is a unit in $T_{M_j},$ we have $r \xi h g_w \sigma_j(z)^{b - w} \ne 0$ (recall each $\sigma_j(z)$ is regular). In particular, the coefficient of $z^{c - b + w}$ is nonzero, and algebraic independence dictates that it must coincide with the coefficient of $z^{c - a + w'}$ for some $0 \le w' \le a$ satisfying $c - b + w = c - a + w',$ or equivalently $b - w = a - w'.$ The coefficient of $z^{c - a + w'}$ is $r \xi \sigma_j(z)^{a - w'} f_{w'},$ and thus $r \xi h g_w \sigma_j(z)^{b-w} = r \xi \sigma_j(z)^{a - w'} f_{w'}.$ Solving for $h$ in $T_{M_j},$ and recalling that $b - w = a - w',$ we obtain $h = f_{w'}/g_w,$ which has the required form. Set $f' = f_{w'}, g' = g_{w},$ and $\eta' = \delta$ in this case. 

Now assume $z$ appears in either $f$ or $g.$ Express, in a similar fashion to the preceding work, $$f = f_0'z^d + \ldots + f_d'$$ \noindent and $$g = g_0'z^e + \ldots + g_e'$$ \noindent for $f_i', g_i \in R[ (X \setminus X_j) \cap X^{\varepsilon}, Y_j^{\varepsilon}]$ like before. Since $g$ is a unit in $T_{M_j}$, so is $g_t'$ for some $0 \le t \le e.$ Choose a product $\xi'$ of indeterminates from $X^{\varepsilon}$ such that $\xi f_i', \xi g_i' \in R_{\varepsilon}.$ Then $r' \xi' h g = r' \xi' f$ for $r' \in R \setminus M_j.$ Now $r' \xi' h g_t' \ne 0,$ and therefore $r \xi' h g_t'$ must coincide with $r' \xi' f_{t'}'$ for some $t'$ in the corresponding expression for $f.$ Solving for $h$ as before, we obtain $h = f_{t'}'/g_t'.$ Set $f' = f_{t'}', g' = g_t',$ and $\eta' = \varepsilon$ in this case. \end{proof}

The next theorem, which is really the heart of our construction, confirms that $\Spec S$ does indeed have the right prime ideal structure in its intersection classes.

\begin{thm}\label{HopeThisWorks} Let $Q$ be any prime ideal of $R$ for which $QR[X] \cap \mathcal J^*$ is empty. Then $QS$ is a prime ideal in $S.$ In particular, if $QR[X] \in \mathscr S \bigcup \bigcup_{i=1}^n \mathcal K_i$ then $QS$ is a prime ideal of $S.$ If additionally $Q \in [\mathcal W/Q'] \subset \Spec R$ is any prime ideal such that $QR[X] \cap \mathcal J^*$ is empty, then $Q \in [\mathcal WS/Q'S].$ Moreover, if $P$ is any height one prime ideal of $S$ contained in at least two maximal ideals, then $P$ is extended from $R.$ \end{thm}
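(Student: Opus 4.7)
The plan is to apply Lemma \ref{ReduceToRX} to reduce the main primality claim to a local computation, use Lemma \ref{BetterExpression} to carry it out, and then derive the remaining three assertions as consequences.

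For the main claim, by the comment following Lemma \ref{ReduceToRX}, it suffices to show $QS_{M_i} \cap R[X] = QR[X]$ for each $i$ with $Q \subseteq M_i$; only $\subseteq$ needs work. Given a nonzero $h$ in the left-hand side with $h \in R_\omega$, Lemma \ref{BetterExpression} yields $h = p/q$ in $T_{M_i} = S_{M_i}$, with $p, q \in R[(X \setminus X_i) \cap X^\omega, Y_i^\omega]$, all $R$-coefficients of $p$ lying in $Q$, and $q$ a unit in $S_{M_i}$. Multiplying the identity $hq = p$ by a monomial $\xi$ in $X_i \cap X^\omega$ with exponents large enough to clear the $Y_i^\omega$-denominators yields $h(\xi q) = \xi p$ in $T_{M_i}$; since $R[X] \hookrightarrow T_{M_i}$ as subrings of the common total quotient ring $\mathcal Q$, this equation holds in $R[X]$. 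Direct distribution shows $\xi p \in QR[X]$, so primality of $QR[X]$ in $R[X]$ forces $h \in QR[X]$ or $\xi q \in QR[X]$.

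The main obstacle is ruling out $\xi q \in QR[X]$. Reducing modulo $Q$, the hypothesis $QR[X] \cap \mathcal J^* = \emptyset$ ensures each $\overline{\sigma_i(z)}$ is nonzero in the domain $(R/Q)[X]$, so $\overline{\sigma_i(z)}/\bar z$ is a well-defined element of $(R/Q)[X][\bar z^{-1}]$. Exploiting the ``lower-triangular'' structure $\sigma_i(x_t) \in R_{t'}$ with $t' < t$, one verifies by induction along the well-ordering of $X_i \cap X^\omega$ that the natural surjection $T_i/QT_i \twoheadrightarrow (R/Q)[X \setminus X_i][\overline{\sigma_i(z)}/\bar z : z \in X_i \cap X^\omega]$ is in fact an isomorphism. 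Under it, the assumption $\overline{\xi q} = 0$ in $(R/Q)[X]$ corresponds to $q \in QT_i \subseteq M_iT_i$, contradicting that $q$ is a unit in $S_{M_i}$.

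The ``in particular'' statement is immediate from the recursive construction of $\mathcal J^*$: each $y_{P_t}$ is chosen outside $\mathscr S \cup \bigcup_i \bigcup_{Q \in \mathcal K_i^t} Q$, and condition 1 on $\mathcal K_i^s$ ensures any prime added to $\mathcal K_i$ does not contain a previously selected $y_{P_{s'}}$; hence $Q \cap \mathcal J^* = \emptyset$ for every $Q \in \mathscr S \cup \bigcup_i \mathcal K_i$. For the intersection-class claim, $QS$ is prime by the first part, and the containments $Q'S \subsetneq QS \subsetneq MS$ together with Lemma \ref{intersect}(4), which bijects maximal and minimal primes of $R$ with those of $S$, give $G(QS)^* = \mathcal WS$ and $L(QS)^* = \{Q'S\}$.

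For the final assertion, let $P$ be a height one prime of $S$ in distinct maximals $M_iS, M_jS$ and set $Q = P \cap R$. Since $Q$ lies under two distinct maximals of the $K$-poset $\Spec R$, it is either minimal or is a height one prime dominating at least two minimals, so in either case $Q \in \mathcal H_{\Spec R}$; consequently $QR[X] \in \mathscr S$ and $QS$ is prime. The delicate step is to rule out $Q \in \min R$: were $Q$ minimal, $QS$ would be a minimal prime strictly below the height one $P$, and one shows that the presence of a height one $P$ above $QS$ lying in two distinct maximals of $S$ would force some $\sigma_l(z) \in P \cap R[X] \subseteq QR[X]$ (via the $Y$-structure of $S$), contradicting $QR[X] \cap \mathcal J^* = \emptyset$. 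With $Q$ forced to be height one in $R$, the minimal prime $Q''S$ of $S$ below $QS$ (with $Q'' \subsetneq Q$ minimal in $R$) gives $\height QS \geq 1$, and $QS \subseteq P$ of matching heights forces $P = QS$, exhibiting $P$ as extended from $R$.
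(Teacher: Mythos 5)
Your reduction of the main primality claim to Lemma \ref{ReduceToRX} and Lemma \ref{BetterExpression} matches the paper's strategy, but the key exclusion step $\xi q \notin QR[X]$ is handled very differently: the paper runs a transfinite induction on $t \in \Lambda$, establishing $QS_{M_i} \cap R_s = QR_s$ for $s < t$ and then pushing $\sigma_i(z)$ into $QR_s \cap \mathcal J^*$ for a contradiction, whereas you pass to $T_i/QT_i$ and argue it embeds as the subring $(R/Q)[X\setminus X_i][\overline{\sigma_i(z)}/\bar z]$ of the fraction field of $(R/Q)[X]$. That is a legitimate alternative route, but the crux --- that $\{\overline{\sigma_i(z)}/\bar z : z \in X_i\}$ is algebraically independent over $(R/Q)[X\setminus X_i]$, equivalently that your ``natural surjection'' is injective --- is exactly where the triangular structure of $\sigma_i$ and the hypothesis $QR[X]\cap\mathcal J^*=\emptyset$ must be used, and you only assert it by saying ``one verifies by induction.'' That assertion is the entire content of the step; without carrying out the induction (that $\overline{\sigma_i(x_{t_k})}/\bar x_{t_k}$ is transcendental over the field generated by $X\setminus\{x_{t_k}\}$, which needs $\overline{\sigma_i(x_{t_k})}\neq 0$, i.e.\ the hypothesis), the main claim is not proved.

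The more serious gap is in the final assertion. You write that since $Q = P\cap R$ lies under two distinct maximal primes of $R$, it ``is either minimal or is a height one prime dominating at least two minimals, so in either case $Q\in\mathcal H_{\Spec R}$.'' That dichotomy is false: $\mathcal H_{\Spec R}$ consists of minimals and height one primes over at least two \emph{minimals}, not under two maximals, and a height one $Q$ lying in $M_i\cap M_j$ but over a single minimal $Q'$ is precisely what populates the classes $[\mathcal M/Q']$ with $|\mathcal M|\ge 2$ --- the very primes the construction is engineered to discard or retain. For such $Q$, one cannot conclude $QR[X]\in\mathscr S$, and $QR[X]\cap\mathcal J^*$ may well be nonempty. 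The paper handles this by splitting on whether $(P\cap R)R[X]\cap\mathcal J^*$ is empty: if empty, $(P\cap R)S$ is prime and equals $P$; if not, it produces some $\sigma_i(z)\in P$ and derives the same contradiction as in the minimal case. Your proof omits this case entirely, so the claim ``$P$ is extended from $R$'' is not established for the primes $P$ whose contraction has been removed from an intersection class. You should drop the $\mathcal H_{\Spec R}$ shortcut and perform the case analysis on $(P\cap R)R[X]\cap\mathcal J^*$ as the paper does.
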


\begin{proof} Let $Q \subset M_i$ be a prime ideal satisfying the conditions. We show inductively that $QS_{M_i} \cap R[X] = QR[X].$ Seeing as $M_i$ was chosen arbitrarily among those maximal prime ideals containing $Q,$ Lemma \ref{ReduceToRX} will show that $QS$ is a prime ideal in $S.$ Since $Q$ survives the localization to $S_{M_i},$ clearly $QS_{M_i} \cap R = Q,$ thus demonstrating the base case. 

Let $t > 0,$ and assume we have shown that $QS_{M_i} \cap R_s = QR_s$ for all $s < t.$  Take $x \in QS_{M_i} \cap R_t.$ If $x = 0$ in $S_{M_i} = T_{M_i},$ then $rx = 0$ for some $r \in R\setminus M_i.$ In particular, $rx \in QR_t,$ and since $r \notin Q,$ we must have $x \in QR_t.$ We may thus assume $x \ne 0$ in $T_{M_i}.$ Then $x = p/q$ for $p \in QR_t[Y_i^{t}], q \in R_t, q \in S_{M_i}^{\times}$ by Lemma \ref{BetterExpression}. In particular, there is a product $\xi$ of indeterminates chosen from $X_i^{t}$ for which $\xi q \in R[X]$ and $\xi q x \in QR_t.$ Assume, by way of contradiction, that $x \notin QR_t.$ Then $\xi q \in QR_t$ and thus $\xi \in QS_{M_i}.$ In particular, $z \in QS_{M_i}$ for some $z \in X_i,$ and it follows that $\sigma_i(z) = (\sigma_i(z)/z)z \in QS_{M_i} \cap R_{s} = QR_s$ for some $s < t,$ thus conflicting with our assumption that $QR_s \cap \mathcal J^*$ is empty. Therefore $x \in QR_t,$ and the induction is complete.  As $M_i \supset Q$ was chosen arbitrarily, we must have that $QS$ is a prime ideal in $S,$ by Lemma \ref{ReduceToRX}.

Assume additionally that $Q \in [\mathcal W/Q'].$ Then $QS \subset WS$ for all $W \in \mathcal W,$ and $QS \supset Q'S.$ If $MS \supset QS$ for some $M \notin \mathcal W,$ then since $QS \cap R = Q,$ we get that $M = MS \cap R \supset Q,$ which conflicts with $G(Q)^* = \mathcal W.$ In particular, $G(QS)^* = \mathcal WS.$ Similarly, if $Q''S \subset QS$ for some height zero prime ideal $Q''S \ne Q'S,$ then $Q'' = Q''S \cap R \subset QS\cap R = Q,$ which contradicts $L(Q)^* = Q'.$ 

Let $P \subset M_iS \cap M_jS$ be any prime ideal of height one in $S$ contained in two distinct maximal prime ideals. Choose a finite subset $F \subset X$ for which $\height P \cap R[F] \ge 1,$ and let $P^* \subset P \cap R[F]$ be any prime ideal of height one. If $P \cap R$ is a minimal prime ideal of $S,$ then $P^* \cap R$ is a minimal prime ideal of $S$ and thus $P^*R[X] \in \mathscr D$ since of course $\height P^*R[X] = 1.$ Then $P^*R[X] = P_s$ for some $s \in \Lambda,$ and therefore $y_{P_s} \in P.$ In particular, $\sigma_i(z) = (\sigma_i(z)/z)z \in P$ for some $z \in X_i.$ By Lemma \ref{property1}, both $\sigma_i(z)/z$ and $z$ are in $S$ so that either $z \in P$ or $\sigma_i(z)/z \in P.$ If $\sigma_i(z)/z \in P,$ then $\sigma_i(z)/z \in M_iS_i \cap T_i = M_iT_i,$ which is nonsense because $\sigma_i(z)/z$ is an indeterminate over $R$ in $T_i.$ Therefore, $z \in P.$ Now $z \in M_jS_j \cap T_j = M_jT_j$ since $z \in X \setminus X_j.$ Again, this is absurd since $z$ is an indeterminate over $R$ in $T_j.$ Therefore, $P \cap R$ is not a minimal prime ideal of $R.$ Seeing as $P \cap R$ is not a maximal prime ideal of $R,$ it follows that $\height P \cap R = 1.$ Either $(P \cap R)R[X] \cap \mathcal J^*$ is empty or not. If it is empty, then we have just shown that $(P \cap R)S$ is a prime ideal. Seeing as $\height P = 1,$ it follows that $P = (P \cap R)S.$ Otherwise, $(P \cap R)R[X] \cap \mathcal J^*$ is not empty, and thus again $\sigma_i(z) \in P$ for some $z \in X_i,$ conflicting with the assumption that $P$ is a prime ideal of $S.$ \end{proof}

Establishing the rest of property 2 amounts to showing that $[NS/PS]$ has cardinality $\beta$ for all maximal prime ideals $N$ of $S$ of height two such that $N \supset P.$ Since $k \subset S$ and $|S| = |k|,$ we may apply Lemma \ref{inequality} to $S$ to see that $\beta \le |[NS/PS]| \le |S| = \beta$ as desired.  

We now establish property 3 and hence the theorem. Since $M$ extends to $MS$ for all maximal prime ideals $M$ of $R,$ we need only show:

\begin{lem} $\mathcal H_{\Spec R} \cong \mathcal H_{\Spec S}$ via the map $Q \rightarrow QS.$ \end{lem}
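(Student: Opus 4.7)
The plan is to verify that $\psi\colon \mathcal{H}_{\Spec R} \to \mathcal{H}_{\Spec S}$ sending $Q$ to $QS$ is a well-defined bijection that is both order-preserving and order-reflecting. The heavy lifting has already been done in Theorem \ref{HopeThisWorks} and Lemma \ref{intersect}, so the proof is essentially a matter of fitting those statements together.

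For well-definedness, if $Q \in \min \Spec R$ then Lemma \ref{intersect} item 4 immediately gives $QS \in \min \Spec S \subseteq \mathcal{H}_{\Spec S}$. If instead $Q$ has height one and dominates at least two minimal primes of $R$, then by construction $QR[X] \in \mathscr{S}$, so $QR[X] \cap \mathcal{J}^*$ is empty; Theorem \ref{HopeThisWorks} then yields that $QS$ is prime and that $L(QS)^*$ consists of the extensions of the minimal primes below $Q$. Since distinct minimal primes of $R$ extend to distinct minimal primes of $S$ by Lemma \ref{intersect} item 4, $QS$ dominates at least two minimal primes of $S$ and hence lives in $\mathcal{H}_{\Spec S}$. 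Injectivity, order-preservation, and order-reflection all reduce to the single identity $QS \cap R = Q$: order-preservation is automatic from ideal extension, and $Q_1 S \subseteq Q_2 S$ yields $Q_1 = Q_1 S \cap R \subseteq Q_2 S \cap R = Q_2$. For $Q$ minimal the identity is clear since $QS$ is itself minimal in $S$; for height one $Q \in \mathcal{H}_{\Spec R}$, the inductive argument inside the proof of Theorem \ref{HopeThisWorks} establishes $QS_{M_i} \cap R[X] = QR[X]$ for each $M_i \supset Q$, and the chain of inclusions $QS \subseteq QS_i \subseteq QS_{M_i}$ then forces $QS \cap R[X] = QR[X]$, whence $QS \cap R = Q$.

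Surjectivity is handled by the last assertion of Theorem \ref{HopeThisWorks}: given $P \in \mathcal{H}_{\Spec S}$, either $P$ is minimal and hence of the form $qS$ for some $q \in \min \Spec R$ by Lemma \ref{intersect} item 4, or else $P$ has height one and is contained in at least two maximal prime ideals of $S$, in which case Theorem \ref{HopeThisWorks} guarantees that $P$ is extended from $R$, so $P = QS$ for $Q = P \cap R$. In the latter case the two distinct minimal primes of $S$ below $P$ contract to two distinct minimal primes of $R$ below $Q$, placing $Q$ in $\mathcal{H}_{\Spec R}$. The only point requiring any real care is the identity $QS \cap R = Q$ for height one $Q \in \mathcal{H}_{\Spec R}$, which is what makes injectivity and order-reflection work; this is the main obstacle, but it follows cleanly from the inductive statement in Theorem \ref{HopeThisWorks} once one chases through the localizations $S \subseteq S_i \subseteq S_{M_i}$.
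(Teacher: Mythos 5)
Your proposal follows the same general shape as the paper's argument — both lean on Theorem \ref{HopeThisWorks} and Lemma \ref{intersect}, and both establish well-definedness, the key identity $QS \cap R = Q$, and surjectivity. However, there is a genuine gap in your surjectivity step.

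You assert that a height one $P \in \mathcal H_{\Spec S}$ is ``contained in at least two maximal prime ideals of $S$,'' and then invoke the final assertion of Theorem \ref{HopeThisWorks}. But the definition of $\mathcal H_{\Spec S}$ places $P$ there because it \emph{dominates at least two minimal} prime ideals of $S$; this is not the same as, and does not obviously imply, containment in two maximal prime ideals of $S.$ A height one prime can certainly sit above two minimal primes while lying under a single maximal of height two, so the dichotomy ``minimal, or height one contained in two maximals'' you use to describe $\mathcal H_{\Spec S}$ is not a correct restatement of the definition, and the hypothesis of the last part of Theorem \ref{HopeThisWorks} is not verified. The paper avoids this by routing through $R$ rather than through the maximal ideals of $S$: given such a $P,$ the contraction $P\cap R$ dominates two minimal prime ideals of $R$ (because minimal primes of $S$ are extended, by item 4 of Lemma \ref{intersect}), so $(P\cap R)R[X]$ lies in $\mathscr S$; the \emph{first} part of Theorem \ref{HopeThisWorks} then shows $(P\cap R)S$ is a prime ideal, it has height one by the intersection-class clause of that theorem, and since $(P\cap R)S \subseteq P$ with both of height one, $P = (P\cap R)S.$ You should either replace your maximal-ideal claim with this contraction argument, or supply a separate proof that every height one member of $\mathcal H_{\Spec S}$ lies under at least two maximal prime ideals of $S.$
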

\begin{proof} 
Property 4 of Theorem \ref{intersect} implies $Q \rightarrow QS$ sends $\min \Spec R$ onto $\min \Spec S.$ Moreover, since $\mathcal H_{\Spec R} \subset \mathscr S,$ it follows that $QS$ is a prime ideal for all $Q \in \mathcal H_{\Spec R}$ by Theorem \ref{HopeThisWorks}. In particular, if $Q$ dominates two minimal prime ideals of $R, QS$ must do the same in $S.$ Conversely, if $P$ is a height one prime ideal in $S,$ and it dominates two minimal prime ideals of $S,$ then $P \cap R$ must dominate two minimal prime ideals of $R.$ Therefore $P \cap R \in \mathscr S,$ and therefore $P = (P \cap R)S$ is extended by Theorem \ref{HopeThisWorks}. \end{proof}

\section*{References}
\bibliographystyle{plain}  
\bibliography{DissertationBib} 
\index{Bibliography@\emph{Bibliography}}

\end{document}